\pgfplotsset{compat=1.16}
\newtheorem{theorem}{Theorem}[section]
\newtheorem{corollary}[theorem]{Corollary}
\newtheorem{proposition}[theorem]{Proposition}
\newtheorem{lemma}[theorem]{Lemma}
\numberwithin{equation}{section}
\theoremstyle{definition}
\theoremstyle{remark}
\newtheorem{remark}[theorem]{Remark}
\newtheorem*{remark*}{Remark}
\newcommand{\1}[1]{{\mathbbm{1}\mkern -1.5mu}{\{#1\}}}
\newcommand{\2}[1]{{\mathbbm{1}}_{#1}}
\newcommand{\R}{{\mathbb R}}
\newcommand{\Z}{{\mathbb Z}}
\newcommand{\N}{{\mathbb N}}
\newcommand{\ZP}{{\mathbb Z}_+}
\newcommand{\RP}{{\mathbb R}_+}
\newcommand{\dtv}{d_\text{TV}}
\DeclareMathOperator{\Exp}{\mathbb{E}}
\let\Pr\relax
\DeclareMathOperator{\Pr}{\mathbb{P}}
\let\Re\relax
\DeclareMathOperator{\Re}{{\mathrm{Re}}}
\DeclareMathOperator{\Var}{\mathbb{V}ar}
\DeclareMathOperator*{\argmax}{arg \mkern 1mu \max}
\newcommand{\eps}{\varepsilon}
\newcommand{\re}{{\mathrm{e}}}
\newcommand{\ud}{{\mathrm d}}
\newcommand{\unif}[2]{{\mathrm{Unif}[#1,#2]}}
\newcommand{\Bin}[2]{{\mathrm{Bin}(#1,#2)}}
\newcommand{\cE}{{\mathcal E}}
\newcommand{\cF}{{\mathcal F}}
\newcommand{\cK}{{\mathcal K}}
\newcommand{\cL}{{\mathcal L}}
\newcommand{\cN}{{\mathcal N}}
\newcommand{\cT}{{\mathcal T}}
\newcommand{\as}{\ \text{a.s.}}
\newcommand{\bigmid}{\; \bigl| \;}
\newcommand{\Bigmid}{\; \Bigl| \;}
\newcommand{\biggmid}{\; \biggl| \;}
\newcommand{\barPhi}{{\overline{\Phi}}}
\newcommand{\eqd}{\overset{d}{=}}
\def\namedlabel#1#2{\begingroup  
    (#2)%
    \def\@currentlabel{#2}%
    \phantomsection\label{#1}\endgroup
}
\newlist{myenumi}{enumerate}{10}
\setlist[myenumi]{leftmargin=0pt, labelindent=\parindent, listparindent=\parindent, labelwidth=0pt, itemindent=!, itemsep=1pt, parsep=4pt}
\newlist{thmenumi}{enumerate}{10}
\setlist[thmenumi]{leftmargin=0pt, labelindent=\parindent, listparindent=\parindent, labelwidth=0pt, itemindent=!}
\title{Rates of convergence for extremal spacings in Kakutani's\\ random interval-splitting process}
\author{Fraser Daly\footnote{\raggedright Department of Actuarial Mathematics and Statistics, and the Maxwell Institute for Mathematical Sciences, Heriot--Watt University, Edinburgh EH14 4AS; \href{mailto:F.Daly@hw.ac.uk}{\texttt{F.Daly@hw.ac.uk}}} \and Andrew Wade\footnote{\raggedright Department of Mathematical Sciences, Durham University, Durham DH1 3LE; \href{mailto:andrew.wade@durham.ac.uk}{\texttt{andrew.wade@durham.ac.uk}}}}
\begin{document}

\maketitle

\begin{abstract}
Kakutani's random interval-splitting process iteratively divides, via a uniformly
random splitting point, the largest sub-interval in a partition of the unit interval. The length of the longest sub-interval after $n$ steps, suitably centred and scaled, is known to satisfy a central limit theorem as $n \to \infty$. We provide a quantitative (Berry--Esseen) upper bound for the finite-$n$ approximation in the central limit theorem, with conjecturally optimal rates in~$n$. We also prove convergence to an exponential distribution for the length of the smallest sub-interval, with quantitative bounds. The Kakutani process can be embedded in certain branching and fragmentation processes, and we translate our results into that context also. Our proof uses conditioning on an intermediate time, a conditional independence structure for statistics involving small sub-intervals, an Hermite--Edgeworth expansion, and moments estimates with quantitative error bounds.
\end{abstract}


\noindent
{\em Key words:} Interval division, Kakutani process, maximum/minimum gap, central limit theorem,  Berry--Esseen theorem, Crump--Mode--Jagers process, branching random walk.

\smallskip

\noindent
{\em AMS Subject Classification:}  60F05 (Primary) 60G18, 60J80
 (Secondary).

\section{Kakutani's interval-splitting process}

\subsection{Main results}
\label{sec:main-result}

The subject of this paper is the following random process 
which takes values on partitions of the unit interval
and evolves by successive uniform binary splitting of the maximal interval, attributed to Kakutani. 
Start with the unit interval $[0,1]$; at each subsequent step, choose the largest of the current collection of intervals, and split it into
two random subintervals by inserting a uniform random splitting point, independently of previous steps. 
Ties can be broken arbitrarily,
but, with probability~$1$, they do not occur. 

We give some slightly more formal definitions, and some 
historical context and motivation, in Section~\ref{sec:background} below. Our main result, Theorem~\ref{thm:clt},  concerns the $n \to \infty$ asymptotics of the random variable $M_n$, the length of the largest among the $n+1$ subintervals in the partition
resulting from $n$~steps of the process  described above.

Since the sum of all the subintervals in the partition is always~$1$, it is clear that $M_n > \frac{1}{n+1}$, a.s., for every $n \in \N := \{1,2,3,\ldots\}$. 
The strong law of large numbers
\begin{equation}
    \label{eq:slln}
    \lim_{n \to \infty}   n M_n   = 2 , \as
\end{equation}
is due to Lootgieter
(Corollary 1.2 of~\cite[p.~397]{lootgieter-77b}) and Pyke (Lemma~1 in~\cite[p.~159]{pyke}).
Over 20 years later, Pyke and van~Zwet~\cite[p.~414]{pvz}
obtained the following central limit theorem (CLT).
Let $\Phi$ denote the cumulative distribution function of the standard normal distribution, i.e., 
$\Phi (x) := (2\pi)^{-1/2} \int_{-\infty}^x \re^{-z^2/2} \ud z$ for $x \in \R$.

\begin{proposition}[Pyke \& van Zwet, 2004]
\label{prop:M-clt}
Let $\sigma^2 := 16 \log 2 - 10 \approx 1.09035$.
Then 
\[ 
\lim_{n \to \infty} \Pr \left(  \sqrt{ \frac{n^3}{\sigma^2}} \left( M_n - \frac{2}{n} \right)  \leq x \right) = \Phi (x), \text{ for every } x \in \R.\]
\end{proposition}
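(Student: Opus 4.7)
The plan is to reduce the CLT for $M_n$ to a CLT for a first-passage time that has a clean branching-process representation. Define
\[ T(x) \;:=\; \inf\{n \ge 0 : M_n \le x\}. \]
Since $M_n$ is non-increasing in $n$, $\{M_n \le x\}=\{T(x) \le n\}$, so a CLT for $T(x)$ as $x\to 0$ transfers to one for $M_n$ via inversion. Crucially, $T(x)$ does not depend on the order in which Kakutani splits: a sub-interval of length $\ell$ must eventually be split if and only if $\ell>x$, and its children are obtained via independent uniforms. Constructing the full splitting tree, $T(x)$ equals the number of nodes $v$ in the infinite binary tree whose associated length $\ell_v$ exceeds $x$. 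Taking logarithms, $T(x)$ becomes the counting function of a branching random walk:
\[ T(x) \;=\; \#\{v : S_v < y\}, \qquad y := -\log x, \]
where the $S_v$ are path sums along the tree with $\mathrm{Exp}(1)$ edge weights (since $-\log U \sim \mathrm{Exp}(1)$ for $U\sim\unif{0}{1}$); edge weights are independent for edges not sharing a parent, but the two edges emanating from a single node carry weights $-\log U$ and $-\log(1-U)$ for the \emph{same} $U$.

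The next step is a CLT for this counting function. A direct computation using Gamma--Poisson duality gives
\[ \Exp[T(x)] \;=\; \sum_{k\ge 0} 2^k \Pr\bigl(\mathrm{Gamma}(k,1) < y\bigr) \;=\; 2e^y - 1 \;=\; \frac{2}{x} - 1. \]
The variance $\Var(T(x))$ is computed by decomposing $T(x)^2 = \sum_{v,w}\1{S_v<y}\1{S_w<y}$ according to the lowest common ancestor of $v$ and $w$ and its depth, leading to $\Var(T(x)) \sim \sigma_T^2/x$ for an explicit constant $\sigma_T^2$. Standard CLT machinery for supercritical branching counts (martingale or moment methods, together with tightness from moment bounds) then gives $\sqrt{x}\bigl(T(x)-2/x+1\bigr) \tod N(0,\sigma_T^2)$ as $x\to 0$. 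Inverting $\{M_n \le x\}=\{T(x)\le n\}$ and linearising around $x = 2/n$ yields a CLT for $M_n$ with variance $\sigma^2 = 2\sigma_T^2$; matching the stated result requires $\sigma_T^2 = 8\log 2 - 5$.

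The main obstacle is the explicit variance calculation. The correlation between sibling edge weights forces the second-moment decomposition to include integrals of the form $\int_0^1 f(u)\,\mathrm{d}u$ coming from averaging over the split distribution, and the $\log 2$ appearing in $\sigma^2 = 16\log 2 - 10$ should trace back to integrals such as $\int_0^1 \tfrac{\mathrm{d}u}{1+u} = \log 2$. Carrying out this moment bookkeeping to arrive at the precise constant $\sigma_T^2 = 8\log 2 - 5$, and then verifying the distributional CLT (rather than merely convergence of low-order moments) in the presence of the sibling dependency, is the main technical task. A cleaner alternative would be to phrase the whole argument directly in continuous time via the Crump--Mode--Jagers embedding of the BRW, where the counting function $t \mapsto \#\{v : S_v < t\}$ is a classical object, and to appeal to the general CLT for such counting processes, reducing the problem to computing a single variance integral.
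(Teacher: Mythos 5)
The paper does not actually prove Proposition~\ref{prop:M-clt}; it cites it (with the CLT for $N_t$, Proposition~\ref{prop:N-clt}, from~\cite{pvz}, obtained via the inversion~\eqref{eq:M-N-inverse}, and with the branching/fragmentation derivation from~\cite{kingman,si} as in Remark~\ref{rem:N-CLT}). Your $T(x)$ is exactly the paper's $N_t$ (with $t = x$), your inversion $\{M_n \le x\} = \{T(x) \le n\}$ is~\eqref{eq:M-N-inverse}, and your BRW/CMJ picture is the Kingman embedding sketched in Section~\ref{sec:branching}. Your mean computation $\Exp T(x) = 2/x - 1$ agrees with~\eqref{eq:mu-t}, and the identification $\sigma_T^2 = s_0 = 8\log 2 - 5$ so that $\sigma^2 = 2\sigma_T^2$ agrees with Propositions~\ref{prop:variance} and~\ref{prop:N-clt}. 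So you have, in outline, rediscovered precisely the route the paper attributes to Pyke \& van Zwet and to the Kingman/Sibuya--Itoh embedding.

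What your proposal does not do is carry out the two non-trivial steps: the variance identity $\Var T(x) = s_0/x$ (for $x \le 1/2$) and the distributional CLT for $T(x)$. On the first, the clean way to get the variance is not the pairwise LCA decomposition of $T(x)^2$ over the infinite tree, but the one-step self-similarity $N_t \eqd N^{(1)}_{t/U} + N^{(2)}_{t/(1-U)} + 1$, i.e.~\eqref{eq:rde-U}, which yields a closed integral equation for $v(t) = \Var N_t$ whose solution on $(0,1/2]$ is $s_0/t$; this is how~\cite{vanzwet,lootgieter-77b,si} obtain it, and the $\log 2$ enters through $\int_{1/2}^1 (\cdots)\,\ud u$ integrals exactly as you anticipate. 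On the second, the appeal to ``standard CLT machinery for supercritical branching counts'' needs care: the paper explicitly warns that Theorem~3.2 of~\cite{henry} assumes \emph{independent} sibling birth times, which fails here because the two sibling displacements come from $(-\log U, -\log(1-U))$ with a common~$U$; the general result of~\cite{iksanov} does cover the correlated case but is far from an off-the-shelf martingale-plus-moments argument. Pyke \& van Zwet's original CLT proof for $N_t$ instead goes via the decomposition $N_t - n = \sum_{i=1}^{n+1} N^{(i)}_{t/L_{n,i}}$ (the paper's~\eqref{eq:big-sum}) with $n \asymp 1/t$, conditioning on $\cF_n$ so the summands are independent given the gap lengths, and then unconditioning --- essentially the scaffolding the present paper quantifies. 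So: correct identification of the approach and of the key reductions, but the variance calculation and the CLT itself are gaps that you explicitly acknowledge, and the ``standard machinery'' you invoke for the final step does not apply as stated because of the sibling correlation.
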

\begin{remark}
Via an embedding we describe in Section~\ref{sec:branching}, and an inversion we describe in Section~\ref{sec:threshold-times}, Proposition~\ref{prop:M-clt} also follows from earlier work of Sibuya \& Itoh~\cite{si}: see Remark~\ref{rem:N-CLT}.
\end{remark}

Our main result  gives Berry--Esseen bounds on the rate of convergence
in the CLT of Proposition~\ref{prop:M-clt}.

\begin{theorem}
\label{thm:clt}
There exists a constant $C \in \RP := [0,\infty)$ such that,  
\begin{equation}
\label{eq:main-clt}
\sup_{x \in \R} \left| \Pr \left( \sqrt{ \frac{n^3}{\sigma^2}} \left( M_n - \frac{2}{n} \right)  \leq x  \right) - \Phi (x) \right| \leq \frac{C}{\sqrt{n}}  , \text{ for every } n \in \N , \end{equation}
where $\sigma^2$ is as defined in Proposition~\ref{prop:M-clt}.
\end{theorem}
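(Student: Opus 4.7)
The plan is to reduce Theorem~\ref{thm:clt} to a quantitative CLT for a sum of conditionally independent random variables, via an inversion and the branching-process embedding. For $x > 0$, let $\tau(x) := \inf\{k \geq 0 : M_k \leq x\}$, so that $\{M_n \leq x\} = \{\tau(x) \leq n\}$. Writing $x_n(y) := \frac{2}{n} + \sigma n^{-3/2} y$, the bound~\eqref{eq:main-clt} is equivalent to an $O(n^{-1/2})$ Berry--Esseen bound for $\tau(x_n(y))$ around its mean $\sim 2/x_n(y)$; note that the natural fluctuation scale of $\tau(x)$ is $1/\sqrt{x} \asymp \sqrt{n}$. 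This inversion is convenient because $\tau(x)$ has an additive structure under the embedding of Section~\ref{sec:branching}.

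Fix an intermediate time $m = m(n)$ with $m/n$ a small constant to be optimised, and let $\cI_m$ denote the partition at time $m$. Conditional on $\cI_m$, each sub-interval $I \in \cI_m$ evolves, by the embedding, as a rescaled independent copy of the Kakutani process; the number $T_I(x)$ of further splits needed to reduce all descendants of $I$ to length $\leq x$ is a measurable function of this sub-process alone, so the $T_I(x)$ are conditionally independent across $I$. Intervals with $|I| \leq x$ contribute $T_I(x) = 0$. Hence, conditionally on $\cI_m$,
\[
\tau(x) - m \;\eqd\; \sum_{I \in \cI_m,\, |I| > x} T_I(x),
\]
an independent sum with $\Theta(n)$ non-trivial summands whose conditional moments are uniformly controlled by moment estimates for the single-interval process.

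To this conditional sum I would apply an Hermite--Edgeworth expansion, which yields the Gaussian approximation at rate $O(n^{-1/2})$ provided the summand-wise mean, variance, and third absolute moment are computed to sufficient accuracy. These moment estimates come from a careful analysis of the single-interval Kakutani process run until its maximum drops below a prescribed threshold, leveraging Proposition~\ref{prop:M-clt} quantitatively together with standard branching-process tail bounds. Removing the conditioning on $\cI_m$ then uses the SLLN~\eqref{eq:slln} and quantitative CLT/moderate-deviation estimates for the composition of $\cI_m$, combined with a Lipschitz dependence of the conditional Gaussian approximation on the few relevant summary statistics of $\cI_m$.

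The main obstacle is the quantitative interplay between the two scales: one must show that both the Edgeworth remainder and the conditional-to-unconditional transfer each contribute at most $O(n^{-1/2})$ to the total error. This requires explicit third-moment bounds \emph{with} quantitative remainder terms for a random functional of the Kakutani process, together with moderate-deviation estimates on the empirical length distribution of $\cI_m$. The intermediate time $m$ is chosen to balance these two competing errors. A subsidiary technical issue is verifying the non-lattice (Cram\'er-type) condition required to invoke a useful Edgeworth expansion; this should follow from the absolute continuity of the uniform splitting distributions.
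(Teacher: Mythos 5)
Your overall strategy closely mirrors the paper's: your $\tau(x)$ is exactly the paper's $N_x$, the inversion $\{M_n\leq x\}=\{\tau(x)\leq n\}$ is the paper's~\eqref{eq:M-N-inverse}, the decomposition of $\tau(x)-m$ over the partition $\cI_m$ at an intermediate time $m\approx cn$ is the paper's~\eqref{eq:big-sum}, and an Edgeworth/Hermite expansion plus moment estimates for removing the conditioning is indeed the paper's plan (Sections~\ref{sec:berry-esseen}--\ref{sec:hermite}).

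The genuine gap is at the unconditioning step, which is the technical heart of the paper and which your proposal underestimates. After the conditional Gaussian approximation you are left with $\Exp\,\Phi\bigl((x+Z_{n,t})/\sqrt{1-\alpha_{n,t}}\bigr)$, where the $\cF_n$-measurable random shift $Z_{n,t}=-R_{n,t}/\sqrt{v(t)}$ and scaling $\alpha_{n,t}=S_{n,t}/v(t)$ have fluctuations of order $\Theta(1)$ at the working time scale $m\approx c_1 n$ (they are $O(\sqrt{c_1})$ but \emph{not} $o(1)$ as $n\to\infty$). Consequently a ``Lipschitz dependence of the conditional Gaussian approximation on the relevant summary statistics'' can only deliver an $O(1)$ bound, not the target $O(n^{-1/2})$. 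The paper's resolution is to expand $\Phi$ around $x$ to order $m=\Theta(\log n)$ in Hermite polynomials (Lemma~\ref{lem:new-edgeworth}); the truncation error decays geometrically in $m$, hence polynomially in $n$, but the resulting terms $\Exp\cE^\ell_{n,t}$ are individually not small --- they become $O(n^{-1/2})$ only after combinatorial cancellations (Lemma~\ref{lem:error-estimates}, via identities like $\sum_k(-1)^{k+1}\binom{\lfloor(\ell+1)/2\rfloor}{k}=0$), which in turn require the sharp mixed-moment asymptotics of $R_{n,t}$ and $S_{n,t}$ in Proposition~\ref{prop:R-S-cross-moments}, themselves resting on the conditional independence structure of small-gap statistics (Lemma~\ref{lem:small-gaps-U}). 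None of this follows from SLLN-type or moderate-deviation control of $\cI_m$ together with Lipschitz continuity.

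A secondary point: you propose applying an Edgeworth expansion directly to the conditional sum, which is why you worry about non-lattice/Cram\'er conditions. The paper avoids this entirely by applying only the classical first-order Berry--Esseen theorem conditionally (Lemma~\ref{lem:b-e-conditional}), whose contribution is $O(\Exp(M_n^2)\,t^{-3/2})=O(n^{-1/2})$; the Hermite expansion is then a purely analytic expansion of $\Phi$ applied to the random shift and scale, needing no smoothness hypothesis on the conditional summands.
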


\begin{remark}
\label{rems:clt}
   We conjecture that the $O(n^{-1/2})$ error bound in Theorem~\ref{thm:clt} is optimal,
   as is generic in the 
   classical Berry--Esseen theorem (see~e.g.~\cite[\S7.6]{gut}). While we do not have a proof  of a lower bound of matching order to the upper bound in~\eqref{eq:main-clt},  Figure~\ref{fig1} presents some simulation evidence that appears to support this conjecture. 
\end{remark}

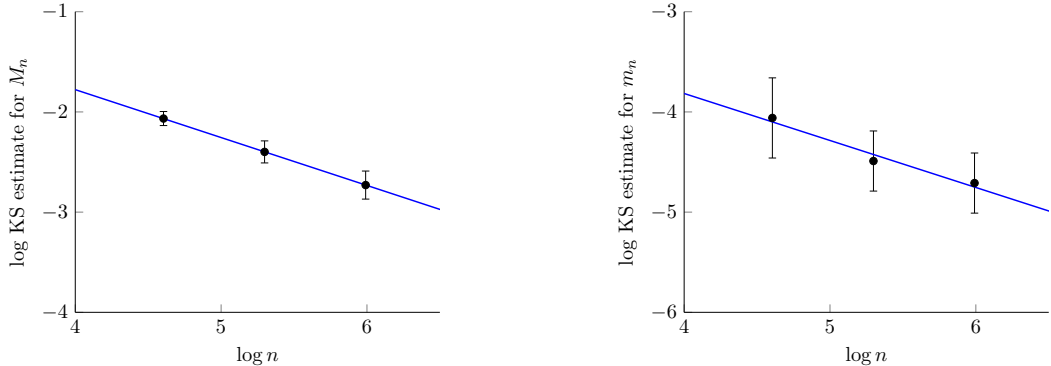
\begin{figure}[t]
    \centering
    \begin{tikzpicture}[scale=0.7]
    \begin{axis}[
    xtick pos=left,
    ytick pos=left,
    axis y line*=left,
    axis x line*=left,
    ylabel near ticks,
    xlabel near ticks,
    xlabel=$\log n$,
    ylabel=$\log$ KS estimate for $M_n$,
    xmin=4,xmax=6.5,
    ymin=-4, ymax=-1,
    xtick={4, 5, 6},
    xticklabels={4, 5, 6,  highres},
    ]
    \addplot[
    scatter/classes={d={black}},
    scatter,
    only marks,
    visualization depends on=\thisrow{ey} \as \myshift,
    every node near coord/.append style = {shift={(axis direction
    cs:0,\myshift)}},
    scatter src=explicit symbolic,
    nodes near coords*={\Label},
    visualization depends on={value \thisrow{label} \as \Label},
    ]%
    plot [error bars/.cd, y dir = both, y explicit]
    table[meta=class, x=x, y=y, y error=ey]{
        x       y     ey    class label
        4.605  -2.066 0.07  d   {}  
        5.298  -2.399 0.11  d   {}  
        5.991  -2.730 0.14  d   {}  
    };
    \addplot[thick, draw=blue, mark=none,domain={4:7.5}] {-.478*x+0.1343};
    \end{axis}
    \end{tikzpicture}
    \hspace{2cm}
      \begin{tikzpicture}[scale=0.7]
    \begin{axis}[
    xtick pos=left,
    ytick pos=left,
    axis y line*=left,
    axis x line*=left,
    ylabel near ticks,
    xlabel near ticks,
    xlabel=$\log n$,
    ylabel=$\log$ KS estimate for $m_n$,
    xmin=4,xmax=6.5,
    ymin=-6, ymax=-3,
    xtick={4, 5, 6},
    xticklabels={4, 5, 6, highres},
    ]
    \addplot[
    scatter/classes={d={black}},
    scatter,
    only marks,
    visualization depends on=\thisrow{ey} \as \myshift,
    every node near coord/.append style = {shift={(axis direction
    cs:0,\myshift)}},
    scatter src=explicit symbolic,
    nodes near coords*={\Label},
    visualization depends on={value \thisrow{label} \as \Label},
    ]%
    plot [error bars/.cd, y dir = both, y explicit]
    table[meta=class, x=x, y=y, y error=ey]{
        x   y   ey  class   label
        4.605  -4.06 0.4  d   {}     
        5.298  -4.49 0.3 d {}
        5.991  -4.71 0.3  d   {} 
    };  
    \addplot[thick, draw=blue, mark=none,domain={4:7.5}] {-.469*x-1.94};
    \end{axis}
    \end{tikzpicture}
        \caption{Simulated Kolmogorov--Smirnov (KS) distances for extremal spacings. For each $n = 2^k \cdot 100$ ($k \in \{0,1,2\}$), $10^4$ Monte Carlo samples of $M_n, m_n$ were used to estimate the empirical distribution of $\sqrt{ n^3/\sigma^2} (M_n - (2/n))$ (\emph{left}) and $n^2 m_n /2$ (\emph{right}) and hence, after  $10^3$ replications, obtain estimates of the $\log$ of the KS distance to the standard Gaussian (\emph{left}) and unit-mean exponential (\emph{right}), respectively. That is, pictured are estimates for the $\log$ of the left-hand side of~\eqref{eq:main-clt} (\emph{left}) and~\eqref{eq:small-gap} (\emph{right}), along with indication of Monte Carlo spread (error bars). The $\log$-$\log$ plots show a least-squares fit line with slope $\approx -0.48$ (\emph{left}) and $-0.47$ (\emph{right}), consistent with $O(n^{-1/2})$ being the best possible polynomial rate in Theorems~\ref{thm:clt} and~\ref{thm:small-gap}.} \label{fig1}
\end{figure}

A second limit theorem that we deduce from some of the structural  results that we 
develop in this paper concerns the length $m_n$ of the \emph{smallest gap} after $n$ steps of Kakutani's interval-splitting process. In particular, the next result shows that $n^2 m_n/2$ converges in distribution to a unit-mean exponential distribution as $n \to \infty$.

\begin{theorem}
\label{thm:small-gap}
There exists a constant $C \in \RP$ such that,  
\begin{equation}
    \label{eq:small-gap}
 \sup_{x \in \RP} \left| \Pr \left(  \frac{ n^2 m_n}{2}   > x  \right) - \re^{-x} \right| \leq \frac{C (1+ \log n )}{\sqrt{n}}  , \text{ for every } n \in \N . \end{equation}
\end{theorem}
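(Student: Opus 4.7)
The plan is to represent $m_n$ as the minimum of the ``small pieces'' produced across the $n$ split steps and to apply a quantitative Poisson-type approximation, with the intensity computed via the CLT for $M_n$ (Proposition~\ref{prop:M-clt}). Set $V_k := \min(U_k, 1-U_k) \sim \unif{0}{1/2}$, where $U_k$ is the uniform splitter at step $k$; the $V_k$ are then i.i.d., and $V_k$ is independent of the history $\cF_{k-1}$. Write $J_k := V_k M_{k-1}$ for the length of the smaller of the two sub-intervals created at step $k$. A first reduction shows $m_n = \min_{1 \leq k \leq n} J_k$ on an event of very high probability: each ``large piece'' $L_k := (1-V_k) M_{k-1}$ satisfies $L_k \geq M_{k-1}/2 \geq M_n/2 \sim 1/n$ by the SLLN~\eqref{eq:slln}, so no $L$-type piece can realise the minimum at scale $n^{-2}$; and if the minimising $J_{k^\star}$ were itself split at a later step $k' > k^\star$, then $J_{k'} = V_{k'} J_{k^\star} < J_{k^\star}$ would contradict minimality, forcing $J_{k^\star}$ to be a leaf at time $n$.

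Next I would introduce the exceedance count $N_n(x) := \#\{1 \leq k \leq n : J_k \leq 2x/n^2\}$, so that, modulo the negligible event above, $\Pr(n^2 m_n / 2 > x) = \Pr(N_n(x) = 0)$. Independence of $V_k$ from $\cF_{k-1}$, together with $V_k \sim \unif{0}{1/2}$, gives
\[
    \Pr\bigl( J_k \leq 2x/n^2 \bigm| \cF_{k-1} \bigr) \;=\; \min\bigl( 4x/(n^2 M_{k-1}),\, 1 \bigr) ,
\]
and a Chen--Stein-type Poisson approximation controls $|\Pr(N_n(x) = 0) - \exp(-\Exp[N_n(x)])|$, with leading contribution of order $\sum_k p_k^2 \lesssim 1/n$ (since $p_k := \Pr(J_k \leq 2x/n^2) \lesssim k/n^2$) and covariance terms tamed via the ``conditional independence structure for statistics involving small sub-intervals'' alluded to in the abstract. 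The intensity itself is
\[
    \Exp[N_n(x)] \;=\; \sum_{k=1}^n \Exp\bigl[ \min(4x/(n^2 M_{k-1}),\,1) \bigr] ,
\]
which tends to $x$ via the SLLN $k M_{k-1} \to 2$: replacing $1/M_{k-1}$ by $k/2$ gives $\sum_{k=1}^n 2xk/n^2 = x(n+1)/n \to x$.

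The main obstacle is making this last convergence quantitative: to obtain the $(1+\log n)/\sqrt n$ bound, one needs uniform-in-$k$ Berry--Esseen-type control on $\Exp[M_{k-1}^{-1}]$, with the logarithm arising from summations such as $\sum_{k=1}^n k^{-1}$ weighted against CLT-scale fluctuations of $kM_{k-1} - 2$. A distributional CLT does not immediately translate to such moment bounds, so additional tail and integrability estimates for $M_k$ are required; these are presumably developed \emph{en route} to Theorem~\ref{thm:clt} (which itself rests on refinements of Proposition~\ref{prop:M-clt}) and reused in the present context.
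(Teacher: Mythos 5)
Your decomposition is, at heart, the same as the paper's: your indicator $\1{J_k \leq t}$ is exactly the paper's $\theta_{k-1}$ in Corollary~\ref{cor:Knt-representation}, your conditional probability formula $\Pr\bigl(J_k \leq t \mid \cF_{k-1}\bigr) = 2t/M_{k-1}$ (for $t < M_{k-1}/2$) is the paper's $\Pr(\theta_{k-1}=1 \mid M_{k-1}) = 2t/M_{k-1}$, and the observation that a small $J_k$ is never subsequently split is exactly what makes $K_{n,t} = \sum_{i<n}\theta_i$ a count of \emph{surviving} gaps of size $\leq t$ in the paper. Your preliminary reduction to $m_n = \min_k J_k$ is thus an unnecessary detour --- the paper simply writes $\Pr(m_n>t) = \Pr(K_{n,t}=0)$ directly. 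The choice of Chen--Stein over Le Cam plus the compound-vs-simple Poisson bound of Barbour--Holst--Janson (which the paper uses) is a cosmetic difference; your back-of-envelope $\sum_k p_k^2 \lesssim 1/n$ matches the $9nt$ term in the paper, and your appeal to the conditional independence structure is the right way to control the remaining terms.

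Two points need fixing, however. First, your ``main obstacle'' paragraph correctly flags that one needs a quantitative bound on $\Exp \bigl|\sum_{k<n} M_k^{-1} - n^2/4\bigr|$, but you do not actually supply it, and this is genuinely the crux; the paper derives $\Exp\bigl|\tfrac{4}{n^2}\sum_{k<n}M_k^{-1} - 1\bigr| \leq B_1/\sqrt n$ (Corollary~\ref{cor:reciprocal-moments}) from Lemma~\ref{lem:M-deviations}, $\sup_n \sqrt n\,\Exp|nM_n-2| \leq B_0$, which is proved via moment and Chebyshev bounds on $N_t$, not by ``additional tail estimates presumably developed en route to Theorem~\ref{thm:clt}'' --- the logical order is reversed, these estimates feed into Theorem~\ref{thm:clt}, not the other way around. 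Second, your diagnosis of where the $\log n$ comes from is wrong. There is no harmonic sum $\sum_k k^{-1}$ anywhere: the intermediate bound is uniformly $|\Pr(n^2 m_n/2 > \theta) - \re^{-\theta}| \leq C\theta n^{-1/2}$, and the $\log n$ enters only because this bound degrades linearly in $\theta$, so one truncates the supremum at $\theta = 1+\log n$ (for larger $\theta$, both $\Pr(\cdot)$ and $\re^{-\theta}$ are already $O(1/n)$). Your harmonic-sum heuristic would in fact over- or under-count depending on where you placed it; it is not what the argument does.
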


\begin{remark}
\label{rems:small-gap}
We suspect that the $n^{-1/2}$ polynomial rate in~\eqref{eq:small-gap} cannot be improved (see Figure~\ref{fig1}),
in contrast to the $n^{-1}$ rate in the analogous limit for the Dirichlet process at~\eqref{eq:dirichlet-min} below, but we are unsure if the $\log n$ factor is sharp. 
Establishing the optimal rate of convergence in Theorem~\ref{thm:small-gap}, including removing any possibly superfluous logarithmic factors and explaining apparent non-linearities in the right-hand plot of Figure~\ref{fig1}, is left as a topic for future work.
\end{remark}

We give an overview of our proof strategy, and the organization of the paper, in Section~\ref{sec:proof-overview} below. First, in Section~\ref{sec:branching} we describe how our result can be interpreted in the context of a  branching process, and its relationship to some adjacent models.
 
\subsection{Related branching, fragmentation, and parking  processes}
\label{sec:branching}

The recursive interval division of the Kakutani process is reminiscent of branching and fragmentation structures. 
Indeed, the Kakutani process admits several embeddings into (perhaps more familiar) classes of probability models, and is adjacent to several others. In this section we indicate some of these links, where, typically, optimal Berry--Esseen results do not appear to be known, in part to draw attention to scope for possible extensions of the present work.

We believe that Kingman~\cite{kingman} was  the first to  explicitly  use  an embedding into a  branching process to put the strong law~\eqref{eq:slln} for the Kakutani process into a more general framework;
essentially the same construction appears in Sibuya \& Itoh~\cite{si} but without the explicit link to the Kakutani model. Subsequent work has also emphasized correspondences to  fragmentation processes and branching random walks, where considerable technology has been developed. The Kakutani process translates to quite special versions of these general structures, and, as far as we are aware, our main results are not subsumed within the general literature.

 Kingman~\cite{kingman} observed that 
applying the bijective map $x \mapsto \log (1/x)$ to the collection of subinterval lengths in the Kakutani process gives a process on $(0,\infty)$ in which interval splitting translates to additive displacement; there are several ways to exploit this.
The first is Kingman's original embedding.

\paragraph{Total population in a binary branching process.}
In the Kakutani process, since $M_n \to 0$, a.s., every subinterval present in the process at some time $m$, say, will be split at some time $n \geq m$, and hence have length equal to $M_n$. In other words,
$\cL := \{ M_0, M_1, M_2, \ldots \}$ is the collection of all (a.s.~distinct) subinterval lengths observed during the Kakutani process, every $L_{n,i}$ appears in $\cL$, and, typically,  $L_{n,i} = L_{m,j}$ for many pairs $(n,i)$, $(m,j)$. The length $M_n$ is in position $n+1$ in $\cL$ viewed as an ordered list. That is, if $N_u  := \sum_{\ell \in \cL} \1 { \ell > u }$, we have $\{ M_n > u \}$ if and only if $\{ N_u > n \}$.

Applying the map $x \mapsto \log (1/x)$  gives the branching process interpretation as a population
of individuals with birth times indexed by $\cT := - \log \cL \subset \RP$: the ancestor $0  =  - \log M_0 \in \cT$ is born at time $0$, and every individual $t \in \cT$ gives birth to two offspring at two (different, correlated) times distributed as $t - \log U$ and $t - \log (1-U)$, $U \sim \unif{0}{1}$. 
(Throughout the paper, the notation $\unif{a}{b}$ stands for the continuous uniform distribution on interval $[a,b]$, for real $a<b$.)
The quantity $N_{\re^{-t}} = T_t := \# ( \cT \cap [0,t) )$ is the total population number observed before time $t \in \RP$.

This construction, relating  $M_n$ in the Kakutani process to total population size in a Crump--Mode--Jagers branching process with binary offspring and correlated  birth times,
is due to Kingman~\cite{kingman}.  
The following is  a translation of Theorem~\ref{thm:clt} (in fact, the translation is  direct from Theorem~\ref{thm:n-b-e} via the inversion described in Section~\ref{sec:threshold-times} below). 

\begin{corollary}
\label{cor:cmj}
There exists $C \in \RP$ such that, with $\sigma^2$ as defined in Proposition~\ref{prop:M-clt},
\[ \sup_{x \in \R} \left| \Pr \left( \sqrt{ \frac{2}{\sigma^2}} \re^{-t/2} \left(   T_t - 2 \re^t \right)  \leq x  \right) - \Phi (x) \right| \leq  C \re^{-t/2}   , \text{ for every } t \in \RP . \]
\end{corollary}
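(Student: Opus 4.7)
The plan is to apply Theorem~\ref{thm:clt} via the monotone duality noted above: $\{T_t \leq n\} = \{M_n \leq \re^{-t}\}$ for $n \in \N$ and $t > 0$. Given $x \in \R$ and $t > 0$, I would set
\[ n = n(t,x) := \left\lfloor 2 \re^t + \frac{x \sigma}{\sqrt 2} \re^{t/2} \right\rfloor , \]
so that, using integer-valuedness of $T_t$,
\[ F_t(x) := \Pr\!\left( \sqrt{\tfrac{2}{\sigma^2}}\, \re^{-t/2} (T_t - 2 \re^t) \leq x \right) = \Pr (T_t \leq n) = \Pr (M_n \leq \re^{-t}) \]
whenever $n \geq 1$. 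Theorem~\ref{thm:clt} then gives $\bigl| \Pr(M_n \leq \re^{-t}) - \Phi(y_n) \bigr| \leq C / \sqrt n$, where $y_n := (\re^{-t} - 2/n)\sqrt{n^3/\sigma^2}$. The corollary thus reduces to (i) showing $|\Phi(y_n) - \Phi(x)| = O(\re^{-t/2})$ on the range of $x$ for which $n \asymp \re^t$, and (ii) handling those $x$ for which $n$ is far smaller than $\re^t$.

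For (i), on the central range $|x| \leq c\, \re^{t/2}$ (with $c > 0$ a small absolute constant), one has $n \asymp \re^t$ uniformly, so $C/\sqrt n = O(\re^{-t/2})$. To compare $y_n$ with $x$, write $\sigma y_n = h(n)$ with $h(u) := \re^{-t} u^{3/2} - 2 u^{1/2}$ and observe that $h(2 \re^t) = 0$ and $h'(2 \re^t) = \sqrt 2\, \re^{-t/2}$. Inverting the Taylor expansion of $h$ about $2 \re^t$, the real solution $\tilde n$ of $h(\tilde n) = x \sigma$ satisfies $\tilde n = 2 \re^t + x \sigma \re^{t/2}/\sqrt 2 + O(x^2)$, so $|n - \tilde n| \leq 1 + C x^2$. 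Since $h'(\xi) = O(\re^{-t/2})$ uniformly on $\xi \asymp \re^t$, the mean value theorem yields $|y_n - x| \leq C(1 + x^2) \re^{-t/2}$. To pass to the CDF I would use $|\Phi(a) - \Phi(b)| \leq |a-b| \sup_{\xi \in [a,b]} \Phi'(\xi)$ with Gaussian decay: for $|x|$ not too small one has $|y_n - x| \leq |x|/2$ on this range, so $\Phi'(\xi) \leq C \re^{-x^2/8}$ throughout $[\min(x, y_n), \max(x, y_n)]$, and the growing factor $(1 + x^2)$ is absorbed by $\re^{-x^2/8}$; the case of small $|x|$ is trivial using $\Phi'(\xi) \leq 1/\sqrt{2\pi}$. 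This gives $|\Phi(y_n) - \Phi(x)| = O(\re^{-t/2})$ uniformly on the central range.

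For (ii), on the tail range $|x| > c\, \re^{t/2}$, both $\Phi(x)$ and $F_t(x)$ lie within $O(\re^{-t/2})$ of their limits $0$ or $1$: a Gaussian tail bound gives $\Phi(-c \re^{t/2}) = O(\exp(-c^2 \re^t / 2))$, much smaller than $\re^{-t/2}$, while the bound from (i) applied at $|x| = c \re^{t/2}$ yields $F_t(-c \re^{t/2}) = O(\re^{-t/2})$ and $F_t(c \re^{t/2}) = 1 - O(\re^{-t/2})$; monotonicity of $F_t$ and $\Phi$ then sandwiches their difference for all $|x| > c \re^{t/2}$. For any bounded $t$, $\re^{-t/2}$ is bounded below, so the bound is trivial after choosing $C$ sufficiently large.

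The main obstacle is the comparison in (i): one must track the $x^2$ growth of the remainder $|y_n - x|$ carefully and exploit the Gaussian weight in $\Phi'$ to absorb it, while choosing the central window of width $\sim \re^{t/2}$ large enough that the tail handoff in (ii) produces only $O(\re^{-t/2})$ error. The rest is routine inversion.
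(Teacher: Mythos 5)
Your plan is correct, but it takes a substantially longer route than necessary. The identity $T_t = N_{\re^{-t}}$ (Kingman's embedding, as stated in Section~\ref{sec:branching}) means that Corollary~\ref{cor:cmj} is a \emph{literal substitution} of $\re^{-t}$ for the time variable in Theorem~\ref{thm:n-b-e}: replacing $t$ by $\re^{-t}$ there gives $\sqrt{2\re^{-t}/\sigma^2}\,(N_{\re^{-t}} - 2\re^{t}) = \sqrt{2/\sigma^2}\,\re^{-t/2}(T_t - 2\re^t)$ and error bound $C\sqrt{\re^{-t}} = C\re^{-t/2}$, which is exactly the display in the corollary. No inversion, flooring, or Taylor analysis is needed. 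This is the paper's proof, and it is one line.

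Your route instead goes through Theorem~\ref{thm:clt} for $M_n$ and then inverts via $\{T_t \leq n\} = \{M_n \leq \re^{-t}\}$. The irony is that Theorem~\ref{thm:clt} is itself \emph{derived} in the paper from Theorem~\ref{thm:n-b-e} by precisely such a careful inversion (in the proof of Theorem~\ref{thm:clt} in Section~\ref{sec:hermite}). So your argument effectively does that inversion twice, forwards and then backwards, and the floor-function discretisation, the Taylor expansion of $h$, the $(1+x^2)$ control via Gaussian decay, and the tail handoff are all machinery needed only to undo an inversion that did not need to be performed in the first place. That said, the mechanics of your plan are sound: the identity $h(2\re^t) = 0$, $h'(2\re^t) = \sqrt 2\,\re^{-t/2}$ is correct, the $O(x^2)$ in the expansion of $\tilde n$ is indeed uniform in $t$, and the absorption of $(1+x^2)$ by the Gaussian weight works on the central window $|x|\leq c\re^{t/2}$ with $c$ chosen small enough. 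So the plan would ultimately succeed, it just re-derives something that is available directly from the $N_t$ side.
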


A different perspective on Kingman's construction, going back to
Sibuya \& Itoh~\cite{si}, interprets $N_u$ as  the height of a \emph{random fragmentation tree}, a subject studied more generally in~\cite{jn} (see also the references therein). That description is very close to the
one we present in Section~\ref{sec:threshold-times}, and so we defer further discussion to Remark~\ref{rem:N-CLT} below.

Recently, for a  general class of Crump--Mode--Jagers processes,
deep results~\cite{henry,iksanov} have been obtained generalizing the convergence in distribution part of Corollary~\ref{cor:cmj}.  Theorem~3.2 of~\cite{henry} is specific to binary branching but does not apply directly to our case, as that work assumes independent birth times for siblings,
while~\cite{iksanov} does admit the correlations present here. Of course, the non-quantitative part of Corollary~\ref{cor:cmj} is already known via  a translation of Proposition~\ref{prop:M-clt},
but the  results of~\cite{henry,iksanov} indicate that there is a  much more general setting  than the Kakutani process to which one might seek to extend the Berry--Esseen results of Theorem~\ref{thm:clt}. We are not aware of any existing Berry--Esseen results in the Crump--Mode--Jagers context, and our approach in the present paper seems to make essential use of features of the Kakutani model. 

\paragraph{Extremum-driven branching random walk.} Applying the bijective map $x \mapsto \log (1/x)$ to the collection of subinterval lengths, but retaining the original time indexing, translates the Kakutani process to the following special type of discrete-time branching random walk on $\RP$. Start with a single particle at the origin. At each step, the leftmost particle is removed, and replaced with two independent offspring, displaced by independent unit-mean exponential random variables relative to the parent. Let $\ell_n, r_n$ be the location of the leftmost, respectively, rightmost particles after~$n$ branching events. Then $\ell_n = \log (1/M_n)$ and $r_n = \log (1/m_n)$ in terms of $M_n, m_n$ in the Kakutani process described in Section~\ref{sec:main-result}. The following is then a consequence of Theorems~\ref{thm:clt} and~\ref{thm:small-gap}.

\begin{corollary}
\label{cor:brw}
There exists $C \in \RP$ such that, with $\sigma^2$ as defined in Proposition~\ref{prop:M-clt},
\begin{equation}
\label{eq:brw-1}
\sup_{x \in \R} \left| \Pr \left( \sqrt{ \frac{4n}{\sigma^2}} \left( \ell_n - \log (n/2) \right)  \leq x  \right) - \Phi (x) \right| \leq  \frac{C}{\sqrt{n}}  , \text{ for every } n \in \N . \end{equation}
Moreover,
\begin{equation}
\label{eq:brw-2}
\sup_{x \in \R} \left| \Pr \left( r_n - \log ( n^2 /2)  \leq  x  \right) - \exp \bigl( -{\re^{-x}} \bigr) \right| \leq \frac{C (1+ \log n )}{\sqrt{n}}  , \text{ for every } n \in \N . \end{equation}
\end{corollary}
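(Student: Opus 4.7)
The plan is to derive both estimates from Theorems~\ref{thm:clt} and~\ref{thm:small-gap} via the strictly decreasing continuous bijection $x \mapsto \log(1/x)$, which converts statistics of $(\ell_n, r_n)$ into statistics of $(M_n, m_n)$. Absolute continuity of $M_n$ and $m_n$, which follows from the recursive construction, ensures that no point-mass corrections are needed when converting between closed and open events.

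The Gumbel bound \eqref{eq:brw-2} is essentially a one-line change of variable in Theorem~\ref{thm:small-gap}. For any $x \in \R$, $\{r_n - \log(n^2/2) \leq x\} = \{n^2 m_n / 2 \geq \re^{-x}\} = \{n^2 m_n/2 > \re^{-x}\}$ by continuity. Since $x \mapsto \re^{-x}$ bijects $\R$ onto $(0,\infty) \subset \RP$, the supremum over $x \in \R$ on the left matches a supremum over $y \in (0,\infty)$ on the right of the bound in Theorem~\ref{thm:small-gap}, and $\exp(-\re^{-x})$ is the Gumbel CDF; this delivers the estimate with the same $C(1+\log n)/\sqrt{n}$ rate.

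For \eqref{eq:brw-1}, the plan is a quantitative delta method. Set $V_n := \sqrt{4n/\sigma^2}(\ell_n - \log(n/2))$ and $W_n := \sqrt{n^3/\sigma^2}(M_n - 2/n)$. A short rearrangement shows that $\{V_n \leq x\} = \{W_n \geq y_n(x)\}$ with $y_n(x) := (2\sqrt{n}/\sigma)(\re^{-x\sigma/(2\sqrt{n})} - 1)$, yielding the decomposition
\begin{equation*}
\Pr(V_n \leq x) - \Phi(x) = \bigl[\Pr(W_n \geq y_n(x)) - (1 - \Phi(y_n(x)))\bigr] + \bigl[\Phi(-y_n(x)) - \Phi(x)\bigr] .
\end{equation*}
Theorem~\ref{thm:clt}, together with absolute continuity of $W_n$, bounds the first bracket by $C/\sqrt{n}$ uniformly in $x$. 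The second, deterministic bracket is the delta-method defect, and controlling it uniformly in $x \in \R$ at the $O(n^{-1/2})$ rate is the main technical obstacle.

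For that step I would split $\R$ into two regimes. On $|x| \leq 2\sqrt{n}/\sigma$, so that $|x|\sigma/(2\sqrt{n}) \leq 1$, Taylor's theorem gives $|y_n(x) + x| \leq C x^2/\sqrt{n}$, while $-y_n(x)$ and $x$ lie on the same side of the origin with $\min(|x|,|{-y_n(x)}|) \geq |x|/2$. The mean value theorem combined with the Gaussian density bound then yields $|\Phi(-y_n(x)) - \Phi(x)| \leq C x^2 \re^{-x^2/8}/\sqrt{n}$, which is uniformly $O(1/\sqrt{n})$ because $x^2 \re^{-x^2/8}$ is bounded on $\R$. In the complementary range $|x| > 2\sqrt{n}/\sigma$ one instead uses that $|y_n(x)| \geq \sqrt{n}/\sigma$ with $y_n(x)$ opposite in sign to $x$, whence both $\Phi(x)$ and $\Phi(-y_n(x))$ are within $\re^{-cn}$ of the same endpoint of $[0,1]$, making the second bracket superpolynomially small. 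Combining, the delta-method defect is uniformly $O(n^{-1/2})$, proving \eqref{eq:brw-1}.
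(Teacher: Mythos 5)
Your proposal is correct and follows essentially the same route as the paper: both arguments deduce \eqref{eq:brw-1} by pushing Theorem~\ref{thm:clt} through the monotone change of variables $\ell_n=\log(1/M_n)$, isolate a deterministic defect $\Phi(-y_n(x))-\Phi(x)$ (your $y_n(x)$ is the paper's $-b_n(x)$), control it by a Taylor/mean-value estimate $|y_n(x)+x|\le Cx^2/\sqrt n$ on a central regime $|x|=O(\sqrt n)$, and dispose of the complementary regime via Gaussian tail bounds; \eqref{eq:brw-2} is obtained in both by directly transporting Theorem~\ref{thm:small-gap} through $r_n=\log(1/m_n)$. The differences are purely cosmetic (working with $\le x$ rather than $\ge x$, and your explicit regime threshold $2\sqrt n/\sigma$ versus the paper's unspecified $\delta\sqrt n$).
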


Other  models with branching and rank-dependent dynamics,
but a \emph{fixed} population size, have been studied, motivated, for example, by selective pressures on individuals in an evolving population~\cite{brunet,bg} or on species in an evolving ecosystem~\cite{bak}.

\paragraph{A zero-length slack parking model.}
Fix parameters $x \in (0,\infty)$ and $\ell \in [0,1]$. 
Take a sequence of independent $\unif{0}{x}$ random variables, representing the left endpoints of  length-$\ell$ cars that successively arrive at the kerb $[0,x]$. Each car is allowed to park if and only if (i) its extent is contained in the subset of~$[0,x]$ not already occupied by parked cars, and (ii) the gap in which it parks has length exceeding~1. The process becomes \emph{jammed} once no gap between neighbouring cars has length exceeding~$1$, at which point no more cars can park. Let $P_{\ell,x}$ denote the (random) number of cars parked at jamming. 

The $\ell =1$ version of this model was first studied by R\'enyi~\cite{renyi}, who evaluated the asymptotic parking fraction~$\lim_{x \to \infty} x^{-1} \Exp P_{1,x}$. The generalization $0 \leq \ell < 1$, which introduces some slack around each car, is included in~\cite{mullooly,si}, with the asymptotic parking fraction being obtained in~\cite{ki}. The parking model is part of a wide class of models motivated by various irreversible physical and chemical processes, known more broadly as \emph{random sequential adsorption}. 

When~$\ell =0$ (zero-length cars)  the random variable $P_{0,x}$ has the interpretation as the number of splitting events in the following procedure: starting with the interval $[0,x]$, uniformly split any interval of length exceeding~$1$ until there is no such interval left. It is then not hard to see that $P_{0,x}$ has the same distribution as the number of steps in the Kakutani model until all intervals have length at most $1/x$. This quantity is described in more detail in  Section~\ref{sec:threshold-times} below; specifically, $P_{0,x}$ is $N_{1/x}$ defined at~\eqref{eq:nt-def}. In Theorem~\ref{thm:n-b-e} below we give a Berry--Esseen theorem for $N_t$ as $t \to 0$, and hence for $P_{0,x}$ as $x \to \infty$. 

For $\ell =1$ (the original R\'enyi model)
a CLT for $P_{1,x}$ is due to Dvoretzky \& Robbins~\cite{dr}, and 
a Berry--Esseen theorem for $P_{1,x}$ was obtained by Schreiber, Penrose \& Yukich~\cite{spy}, as a special case of a much more general result, but with $\log$ factors in the rate that  Theorem~\ref{thm:n-b-e} below suggests one might be able to remove, in the one-dimensional case, by adapting the approach of the present paper. More recently, a refined   general approach for Berry--Esseen bounds for functions of Poisson point processes, with presumably-optimal rates, has been given by~\cite{lrp}, but, as far as we know,   has yet to be successfully applied to random sequential adsorption.

\subsection{Notation and background}
\label{sec:background}

To discuss the earlier work on the Kakutani model, and for later use in the present paper,
we need some more notation. At time~$n \in\ZP:= \{0\} \cup \N$ (that is, after $n$ splitting events)
the process is represented by the ordered collection
of interval end-points
\begin{equation}
\label{eq:X-notation}
X_{n,0} := 0  < X_{n,1} < \cdots < X_{n,n} < 1 =: X_{n,n+1}.\end{equation}
The associated gap lengths at time $n \in \ZP$ are
\begin{equation}
\label{eq:L-def}
L_{n,i} := X_{n,i} - X_{n,i-1}, \text{ for } 1 \leq i \leq n+1 .\end{equation}
The maximal and minimal gap lengths at time $n \in \ZP$ are, respectively,
\begin{equation}
\label{eq:M-def}
M_n := \max_{1 \leq i \leq n+1} L_{n,i}  ,
\text{ and } m_n:=\min_{1 \leq i \leq n+1} L_{n,i}  .
\end{equation}
The dynamics are driven by  $U_1, U_2, \ldots$,  a sequence of independent $\unif{0}{1}$ random variables.
Let $X_{1,1} := U_1$ and then, recursively, given  
\begin{equation}
\label{eq:L-distinct}
L_{n,1}, \ldots, L_{n,n+1} \text{ all distinct and non-zero}, \end{equation}
define $\ell_n := \argmax_{1 \leq i \leq n+1} L_{n,i}$ (which is uniquely defined).
Then set
\begin{equation}
\label{eq:splitting-recursion}
X_{n+1,i} = \begin{cases} X_{n,i} & \text{for } 0 \leq i \leq \ell_n , \\
X_{n, \ell_n} + U_{n+1} M_n & \text{for } i = \ell_n +1, \\
X_{n, i-1}  & \text{for } \ell_n +2 \leq i \leq n+2. \end{cases} \end{equation}
The fact that the successive divisions  are generated by continuous distributions ensures that, a.s., properties~\eqref{eq:X-notation} and~\eqref{eq:L-distinct}
persist  for all $n \in \N$.

Kakutani conjectured   in a 1973 lecture, in response to a question from H.~Araki (see~\cite[p.~341]{slud}, \cite[p.~571]{kanter} and~\cite[p.~258]{af}) that the empirical distribution $\cE_n (x ) := \frac{1}{n} \sum_{i=1}^n \1 { X_{n,i} \leq x}$  of endpoints
is asymptotically uniform, or \emph{equidistributed}, i.e.
\begin{equation}
\label{eq:equidistribution}
 \lim_{n \to \infty} \sup_{x \in [0,1]} \left| \cE_n (x ) - x \right| = 0 , \as \end{equation}
Kakutani~\cite{kakutani} obtained an analogous result for a deterministic 
model in which the $\unif{0}{1}$
splitting distribution is substituted by a fixed parameter $\alpha \in (0,1)$ for the relative location of the division point.
For the $\unif{0}{1}$-splitting process, 
establishing~\eqref{eq:equidistribution}
was posed as a challenge by R.\ Dudley in 1976~\cite[p.~2443]{bfgg}. The result~\eqref{eq:equidistribution} was proved by van Zwet in~\cite{vanzwet} (submitted in February~1977)
and, independently, using similar ideas, by Lootgieter~\cite{lootgieter-77a,lootgieter-77b} (submitted later the same year).
Van~Zwet~\cite[p.~137]{vanzwet} also acknowledges that Koml\'os and Tusn\'ady 
were aware that a proof could be constructed by the same method. 
A paper of Slud~\cite{slud}, submitted in October~1976, asserted a proof of~\eqref{eq:equidistribution}, via a rather different approach,
but was found to contain an error; Slud later produced a correction. Results on empirical distributions extending~\eqref{eq:equidistribution} to a much wider class of interval-division schemes can be found in~\cite{mp1}; see also references therein.

Define, for $n \in \N$ and $y \in \R$,
\begin{equation}
\label{eq:G-n-def}
G_n (y) := \frac{1}{n+1} \sum_{i=1}^{n+1} \1 { (n+1)L_{n,i} \leq y },  
\end{equation}
the distribution function of  
the 
 empirical measure for the normalized gap lengths. Pyke's uniform limit theorem~\cite[Thm.~1, p.~161]{pyke} shows that
\begin{equation}
\label{eq:e-d-uniform}
 \lim_{n \to \infty} \sup_{y \in [0,2]} \left|  G_n (y) - \frac{y}{2} \right| = 0, \as ,
\end{equation}
which shows that a typical gap is approximately $\unif{0}{2/n}$ in distribution. While~\eqref{eq:e-d-uniform} says that there are at most~$o(n)$ gaps of size bigger than $(2+\eps)/n$, $\eps>0$, Proposition~\ref{prop:M-clt} says that even the maximum gap will typically be only of order $n^{-3/2}$ different from~$2/n$.

\subsection{Inversion and threshold times}
\label{sec:threshold-times}

Associated to the Kakutani process are  the random times $N_t$, $t \in (0,\infty)$, defined by
\begin{equation}
\label{eq:nt-def}
N_t := \inf\{n\in \ZP :M_n\leq t\}\,;
\end{equation}
since $M_0=1$, we have $N_t =0$ for all $t \geq 1$. Moreover, 
since $\lim_{n \to \infty} M_n =0$ a.s.~(which follows of course from~\eqref{eq:slln}, but also via a short elementary argument, as given by Kingman~\cite[p.~148]{kingman}),
we have $\Pr ( N_t < \infty \text{ for every } t > 0 ) = 1$.
The usefulness of $N_t$ as defined at~\eqref{eq:nt-def} for analyzing $M_n$ is due, firstly, to 
the inversion relation
\begin{equation}
\label{eq:M-N-inverse}
\Pr ( M_n \leq t ) = \Pr (N_t \leq n ), \text{ for every } n \in \N \text{ and } t \in (0,\infty), \end{equation}
and, secondly, a more readily accessible recursive structure. Indeed,
by conditioning on the first split (through the variable $U_1$) we obtain the 
fundamental self-similarity relation
\begin{equation}
\label{eq:rde-U1}
N_t = N^{(1)}_{t/U_1} + N^{(2)}_{t/(1-U_1)} + 1 , \as, \text{ for } 0 < t < 1. \end{equation}
In~\eqref{eq:rde-U1}, the processes $N^{(1)} := ( N^{(1)}_s )_{s \in (0,1)}$ and $N^{(2)} := ( N^{(2)}_s )_{s \in (0,1)}$ are independent of $U_1$ and of each other,
and each has the same distribution as $( N_t )_{t \in (0,1)}$. To see that~\eqref{eq:rde-U1} is true,
observe that to reach time $N_t$ the intervals $[0,U_1]$ and $[U_1, 1]$
undergo \emph{independent} Kakutani processes (and one can choose to execute all splittings
on one side first, for example) but scaled by the relevant length factor, i.e., $U_1$ or $1-U_1$.
When the identification of $U_1$ in~\eqref{eq:rde-U1}
with the generating sequence of the process is not relevant, we can write~\eqref{eq:rde-U1} in distributional form
as
\begin{equation}
\label{eq:rde-U}
N_t \eqd N^{(1)}_{t/U} + N^{(2)}_{t/(1-U)} + 1 , \text{ for } 0 < t < 1, \end{equation}
where, on the right-hand side of~\eqref{eq:rde-U},
$U \sim \unif{0}{1}$,
and $U, N^{(1)}, N^{(2)}$ are independent. The central role of the \emph{threshold times} $N_t$ defined at~\eqref{eq:nt-def} and their associated recursions~\eqref{eq:rde-U1}--\eqref{eq:rde-U} was already identified, independently, by van Zwet~\cite[p.~134]{vanzwet} and Lootgieter~\cite[p.~404]{lootgieter-77a}.

The CLT for $M_n$,  Proposition~\ref{prop:M-clt}, was
obtained by Pyke \&~van Zwet~\cite{pvz} 
via the inversion~\eqref{eq:M-N-inverse}
from a corresponding CLT for $N_t$.

\begin{proposition}[Pyke \& van Zwet, 2004]
\label{prop:N-clt}
Let
 $\sigma^2$ be as in Proposition~\ref{prop:M-clt}.
As $t \to 0$,
$\sqrt{t} \left( N_t - \frac{2}{t} \right)$ converges  to the normal distribution with mean $0$ and variance $\sigma^2/2$.
\end{proposition}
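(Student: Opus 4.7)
The plan is to analyze the distributional recursion~\eqref{eq:rde-U} directly, extracting the first two moments of $N_t$ via Mellin-transform methods and then promoting this to a CLT via the same recursive structure. Taking expectations in~\eqref{eq:rde-U} yields the Volterra equation $m(t) := \Exp N_t = 1 + 2\int_t^1 m(t/u)\,\ud u$ on $(0,1)$, with $m(s) = 0$ for $s \geq 1$. A pleasant feature is that $m(t) = 2/t - 1$ solves this equation exactly on $(0,1)$, as direct substitution shows. Applying the conditional variance formula to~\eqref{eq:rde-U} in turn produces
\[ v(t) := \Var N_t = 2\int_0^1 v(t/u)\,\ud u + V(t), \qquad V(t) := \Var_U\bigl( m(t/U) + m(t/(1-U)) \bigr), \]
with $U \sim \unif{0}{1}$. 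Using the exact form of $m$, direct computation gives the closed piecewise form $V(t) = 2t/3$ for $0 < t \leq 1/2$, and $V(t) = -4/(3t^2) + 4/t - 2 - 2t/3$ for $1/2 < t \leq 1$.

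Taking the Mellin transform $\tilde v(s) := \int_0^1 v(t) t^{s-1}\,\ud t$ (and similarly $\tilde V$) converts the $v$-equation to the algebraic relation $\tilde v(s) = (s+1)\,\tilde V(s)/(s-1)$. Mellin inversion, closing the contour to the left of the simple pole at $s = 1$, gives the leading asymptotic
\[ v(t) = \frac{2 \tilde V(1)}{t} + o\Bigl(\frac{1}{t}\Bigr) \quad\text{as } t \to 0, \qquad \tilde V(1) = \int_0^1 V(t)\,\ud t = \frac{8\log 2 - 5}{2}, \]
where the $\log 2$ factor emerges from the integral $\int_{1/2}^1 (4/t)\,\ud t = 4\log 2$ arising in the evaluation of $\tilde V(1)$. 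Thus $v(t) = \sigma^2/(2t) + o(1/t)$ with $\sigma^2 = 16\log 2 - 10$, as asserted.

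For the distributional convergence, set $Z_t := \sqrt{t}(N_t - 2/t)$. Algebraic rearrangement of~\eqref{eq:rde-U} gives
\[ Z_t \eqd \sqrt{U}\, Z^{(1)}_{t/U} + \sqrt{1-U}\, Z^{(2)}_{t/(1-U)} + \sqrt{t} \]
on the event $\{t < U < 1-t\}$ (with explicit boundary corrections of order $O(\sqrt{t})$ when $U < t$ or $U > 1-t$), where $U \sim \unif{0}{1}$ and $Z^{(i)}_s \eqd Z_s$ are independent. The variance asymptotic yields tightness of $\{Z_t\}_{t \to 0^+}$ together with $\Var Z_t \to \sigma^2/2$; any subsequential weak limit $Z$ satisfies the stochastic fixed-point equation $Z \eqd \sqrt{U}\, Z_1 + \sqrt{1-U}\, Z_2$ with $U, Z_1, Z_2$ independent and $Z_i \eqd Z$. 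Among centred laws of prescribed variance this equation characterizes the Gaussian---by a contraction argument in the Wasserstein metric, or equivalently via moment-by-moment induction from the characteristic-function equation $\phi(\lambda) = \int_0^1 \phi(\lambda\sqrt u)\phi(\lambda\sqrt{1-u})\,\ud u$---so $Z_t \tod \cN(0, \sigma^2/2)$.

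The principal obstacle is the explicit identification $\sigma^2 = 16\log 2 - 10$: because the leading $1/t$ part of $v(t)$ is preserved under the averaging $2\int_0^1 v(t/u)\,\ud u$, the constant $\sigma^2/2$ is fixed entirely by $\tilde V(1)$. Computing this requires the explicit piecewise form of $V(t)$ on $(0, 1/2)$ and $(1/2, 1)$, together with careful integration, and it is the contribution from the $(1/2, 1)$ piece that produces the $\log 2$ factor.
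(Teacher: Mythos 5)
Your proposal is correct in its moment computations and gives a genuinely self-contained proof of a result the paper does \emph{not} prove: Proposition~\ref{prop:N-clt} is merely cited (Corollary~3.3 of~\cite{pvz}, with a nod to the earlier Theorem~2 of~\cite{si} via Kingman's embedding). The paper's only proof that would imply the statement is the far heavier machinery of Theorem~\ref{thm:n-b-e} (conditional Berry--Esseen plus Hermite--Edgeworth expansion), which is designed for rates, not just convergence. Your approach via the distributional recursion~\eqref{eq:rde-U}, a Mellin analysis for $v(t)$, and a stochastic fixed-point argument is elementary and closer in spirit to R\"osler's contraction method for divide-and-conquer recursions.

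Your computations all check out: with $m(t)=2/t-1$, the conditional variance formula gives $V(t)=2t/3$ on $(0,\tfrac12]$ and $V(t) = -\tfrac{4}{3t^2}+\tfrac{4}{t}-2-\tfrac{2t}{3}$ on $(\tfrac12,1)$; the Mellin relation $\tilde v(s)=(s+1)\tilde V(s)/(s-1)$ follows from the multiplicative convolution, and $\tilde V(1) = \tfrac{1}{12} + 4\log 2 - \tfrac{31}{12} = \tfrac{8\log 2 - 5}{2}$, giving the leading asymptotic $v(t)\sim (8\log 2-5)/t = (\sigma^2/2)/t$, consistent with Proposition~\ref{prop:variance}. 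Also, the exact identity $Z_t \eqd \sqrt{U}Z^{(1)}_{t/U} + \sqrt{1-U}Z^{(2)}_{t/(1-U)} + \sqrt{t}$ holds (the ``boundary corrections'' you mention are already absorbed because $Z_s = -2/\sqrt{s}$ for $s\geq 1$ is itself $O(\sqrt{t})$ on $\{U<t\}\cup\{U>1-t\}$), and $\Exp Z_t = -\sqrt{t}$, $\Var Z_t = t\,v(t) = s_0$ exactly for $t\leq 1/2$.

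There is, however, a genuine gap in the convergence step as written. First, passing from tightness to ``any subsequential limit $Z$ satisfies $Z\eqd\sqrt{U}Z_1+\sqrt{1-U}Z_2$'' is not automatic: if $Z_{t_n}\tod Z$, the variables $Z^{(1)}_{t_n/U}$ and $Z^{(2)}_{t_n/(1-U)}$ are evaluated at random times that do not run along $\{t_n\}$, so the weak limit of the right-hand side is not immediate. Second, the $\ell_2$-Wasserstein map $\mu\mapsto T\mu := \mathcal{L}(\sqrt{U}Z_1+\sqrt{1-U}Z_2)$ on centred laws of fixed variance is \emph{not} a strict contraction: with independent optimal couplings the cross term vanishes (since both marginals are centred), giving only $d_2(T\mu,T\nu)^2\leq \Exp[U+(1-U)]\,d_2(\mu,\nu)^2 = d_2(\mu,\nu)^2$. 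To close the argument one should use either (a) a higher-order Wasserstein metric $d_p$ with $p>2$, where $\Exp(U^{p/2})+\Exp((1-U)^{p/2}) = \tfrac{4}{p+2}<1$ does give strict contraction --- which requires uniform $p$-th moment bounds on $Z_t$; these are available since Lemma~\ref{lem:N-moments} (equivalently, Theorem~2.2 of~\cite{pvz}) gives $\Exp(N_t^k)=O(t^{-k})$ for every $k$, and the cumulant bounds there in fact give $\sup_{t\leq 1/2}\Exp|Z_t|^p<\infty$ --- or (b) the moment-by-moment induction you mention: for $k\geq 3$, the recursion pins down $\Exp Z^k$ via $\bigl(1-\tfrac{4}{k+2}\bigr)\Exp Z^k = \text{(lower moments)}$, and the limit law is moment-determinate. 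Either repair is standard, but as stated the passage from tightness to identification of the limit is incomplete.
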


In a similar way, we will obtain our quantitative CLT,
Theorem~\ref{thm:clt}, via an inversion of
a corresponding Berry--Esseen result for $N_t$.

\begin{theorem}
    \label{thm:n-b-e}
There exists a constant $C \in \RP$ such that, for all $t \in (0,\infty)$, 
\begin{equation}
    \label{eq:n-b-e}
 \sup_{x \in \R} \left| \Pr \left( \sqrt{\frac{2 t}{\sigma^2}} \left( N_t -\frac{2}{t} \right) \leq x   \right) - \Phi (x) \right| \leq C \sqrt{ t },
\end{equation}
where $\sigma^2$ is as defined in Proposition~\ref{prop:M-clt}. 
\end{theorem}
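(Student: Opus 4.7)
The plan is to exploit the fundamental self-similarity relation~\eqref{eq:rde-U1} via a conditional decomposition at an intermediate time. Specifically, fix $n_0 = n_0(t)$ and condition on $\cF_{n_0}$, the $\sigma$-algebra generated by the first $n_0$ splits of the Kakutani process. Iterating~\eqref{eq:rde-U1} yields the a.s.\ decomposition
\[
N_t - n_0 = \sum_{i=1}^{n_0+1} \tilde N^{(i)}_{t/L_{n_0, i}},
\]
where the $\tilde N^{(i)}$ are iid copies of $N$, independent of $\cF_{n_0}$. Conditionally on $\cF_{n_0}$, therefore, $N_t - n_0$ is a sum of independent summands with $\cF_{n_0}$-measurable scales $s_i := t/L_{n_0,i}$, to which quantitative Gaussian-approximation techniques can be applied.

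The argument then proceeds as follows: (i) derive quantitative moment estimates for $N_s$ as $s \to 0$, namely $\Exp N_s = 2/s + O(1)$, $\Var N_s = \sigma^2/(2s) + O(s^{-1/2})$, and $\Exp |N_s - \Exp N_s|^3 = O(s^{-3/2})$, obtained inductively from the recursion~\eqref{eq:rde-U}; (ii) condition on $\cF_{n_0}$ and apply an Hermite--Edgeworth expansion to the conditionally independent sum, standardising by its conditional mean and variance to compare its distribution to $\Phi$ with quantitative error—classical Berry--Esseen yielding rate $n_0^{-1/2}$, sharpened to $n_0^{-1}$ by retaining the first Edgeworth correction; (iii) integrate over $\cF_{n_0}$, which requires quantitative control of the fluctuations of the conditional mean $\mu(\cF_{n_0}) = \sum_i \Exp[\tilde N^{(i)}_{s_i} \mid \cF_{n_0}]$ and variance around the deterministic targets $2/t - n_0$ and $\sigma^2/(2t)$, with a quantitative strengthening of Pyke's empirical-measure result~\eqref{eq:e-d-uniform} for the normalised gaps, derivable from the same recursion; (iv) combine these via a smoothing argument (e.g.\ Esseen's inequality) to bound the Kolmogorov distance; (v) choose $n_0$, of order $t^{-1/2}$ up to constants, to balance Edgeworth and fluctuation contributions against the target rate $\sqrt{t}$.

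The principal obstacle is managing the cancellations of bias and fluctuation with sufficient precision. Classical Berry--Esseen alone at scale $n_0 \sim t^{-1/2}$ yields rate $n_0^{-1/2} = t^{1/4}$, which is too weak, so the Hermite--Edgeworth correction must be retained and its expectation over $\cF_{n_0}$ carefully analysed; simultaneously, the deviations of $\mu(\cF_{n_0})$ from $2/t - n_0$ and of the conditional variance from $\sigma^2/(2t)$ must integrate to $O(\sqrt{t})$ after standardisation, which calls for not just the leading-order behaviour of $\Exp N_s$ and $\Var N_s$ but also precise second-order asymptotics extracted from the recursion. Verifying that these second-order corrections combine with the Edgeworth term to produce the $\sqrt{t}$ rate without a spurious logarithmic factor is the most delicate part of the argument, and is presumably what distinguishes the clean $\sqrt{t}$ rate here from the $(\log n)/\sqrt{n}$ rate in Theorem~\ref{thm:small-gap}.
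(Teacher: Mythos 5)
Your broad outline — conditioning on $\cF_{n_0}$, decomposing $N_t - n_0$ as a sum of conditionally independent copies via~\eqref{eq:big-sum}, applying a conditional Gaussian approximation, an Hermite--Edgeworth device, and then integrating over $\cF_{n_0}$ — is indeed the paper's skeleton. However, the crucial quantitative choices and the role of the Edgeworth expansion are different enough that the proposal, as written, does not go through.

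First, the scale is wrong. You take $n_0 \sim t^{-1/2}$; the paper takes $n_t = \lfloor c_1/t \rfloor$ with $c_1$ a small constant, so that $n_t t = c_1 \in (0,1)$. This matters because the conditional Berry--Esseen error (Lemma~\ref{lem:b-e-conditional}) is $\asymp M_n^2/t^{3/2} \asymp n^{-2} t^{-3/2}$, not $n^{-1/2}$: the summands $N^{(i)}_{t/L_{n,i}}$ are not identically distributed, their third absolute moments grow like $(L_{n,i}/t)^3$ from Lemma~\ref{lem:N-moments}, and the variance $\sum_i v(t/L_{n,i}) \asymp 1/t$, giving the Lyapunov ratio $n^{-2}t^{-3/2}$. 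At $n_0 \sim t^{-1/2}$ this is $\asymp t^{-1/2}$, which diverges; only at $n_0 \asymp 1/t$ is it $O(\sqrt{t})$. The $n_0^{-1/2}$ heuristic presumes an i.i.d.\ sum with bounded third moments, which is not the situation here. (Even with a hypothetical sharper bound $\Exp |N_s - \mu(s)|^3 = O(s^{-3/2})$, which the paper does not prove, you would get $t^{1/4}$ at your scale, still short of $\sqrt{t}$.) Your ``sharpening to $n_0^{-1}$ by retaining the first Edgeworth correction'' would require precise third-cumulant asymptotics for $N_s$ and the standard Edgeworth remainder for non-i.i.d.\ integer-valued sums; you have not sketched how to obtain these, and the lattice nature of $N_s$ would introduce oscillation terms of order $1/\sigma \asymp \sqrt{t}$ that have to be addressed separately.

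Second, and more fundamentally, you have mislocated what the Hermite--Edgeworth expansion does in the proof. In the paper the expansion (Lemma~\ref{lem:new-edgeworth}) is applied to the Gaussian composite $\Phi\bigl((x+Z_{n,t})/\sqrt{1-\alpha_{n,t}}\bigr)$, not to the conditional distribution of the sum. At the paper's scale $n \asymp c_1/t$, the random centering $Z_{n,t}=-R_{n,t}/\sqrt{v(t)}$ and random scaling $\alpha_{n,t}=S_{n,t}/v(t)$ are \emph{not} small as $t\to 0$: both are of constant order (proportional to powers of $c_1$). The content of Proposition~\ref{prop:clt-with-error} and Lemma~\ref{lem:error-estimates} is that after taking expectations the leading terms in the $(\alpha,Z)$ expansion cancel term-by-term, leaving a contribution $O((n^2 t)^{-1/2}) = O(\sqrt{t})$. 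This delicate cancellation is verified via the mixed-moment estimates of Proposition~\ref{prop:R-S-cross-moments}, and those in turn rest entirely on the fact — which your plan does not use — that $R_{n,t}$ and $S_{n,t}$ are \emph{small-gap statistics} of the form $\cK^g_n(0,t]$ and admit a conditional representation (Lemma~\ref{lem:small-gaps-U}) as sums of a random count $K_{n,t}$ of conditionally i.i.d.\ terms. The ``quantitative strengthening of Pyke's empirical-measure result'' you invoke is precisely the tool the paper explains is inadequate in this regime (the text surrounding~\eqref{eq:pyke-K-lln} notes it is of limited value when $t \ll 1/n$, which is exactly where we are since $nt = c_1$ is small); the conditional independence structure in Section~\ref{sec:small-gaps} is introduced specifically because the empirical-measure approach cannot resolve fluctuations at the small-gap scale. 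So the proposal, as written, has both a scaling error that makes the conditional Berry--Esseen step fail and a gap where the key structural input — the small-gap representation and the attendant cancellation — is replaced by a reference to a result that the paper explicitly says does not reach the required precision.
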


\begin{remark} 
\label{rem:N-CLT}
Proposition~\ref{prop:N-clt} is Corollary~3.3 in~\cite[p.~396]{pvz}. By Kingman's embedding (see Section~\ref{sec:branching}), it can also be recovered from the earlier Theorem~2 of~\cite{si}. See also~\cite[pp.~435--6]{jn} for a framing in terms of general CLTs for the height of random fragmentation trees; we do not know of any Berry--Esseen results in that context.
 \end{remark}
 
 \subsection{Overview of the proofs and some further remarks}
 \label{sec:proof-overview}

 \paragraph{Overview of the proofs.}
 The main work of the paper is proving Theorem~\ref{thm:n-b-e}.
 Theorem~\ref{thm:clt} will be deduced from a careful (but not difficult) inversion of Theorem~\ref{thm:n-b-e}, and Theorem \ref{thm:small-gap} from some results established in the course of the proof of Theorem \ref{thm:n-b-e}. The proof of Theorem~\ref{thm:n-b-e} is in part analytical, with some delicate estimates needed to obtain our presumably-optimal rates, and the overall structure is perhaps of interest more broadly, and is broken down into the following main steps.
 \begin{itemize}
     \item The first step in the main line of the  proof is to apply the classical Berry--Esseen theorem to prove (in Section~\ref{sec:berry-esseen}) a conditional Berry--Esseen theorem (Proposition~\ref{lem:b-e-conditional}) for $N_t$ given the first~$n$ steps of the process, where $4 (n+1) t <1$, ensuring that $0 < N_t-n$ can be expressed as a sum of independent variables, using the basic self-similarly~\eqref{eq:rde-U}. Eventually, we will take $n \approx c_1/t$ for a small $c_1$.
     \item The centering and scaling quantities in the conditional Berry--Esseen bound are themselves random variables (functions of $U_1, \ldots, U_n$), related to conditional means and variances, denoted by $R_{n,t}$ and $S_{n,t}$ defined at~\eqref{eq:R-def} and~\eqref{eq:S-def} below. To ``uncondition'' the bound needs detailed information about the joint distribution of $R_{n,t}$ and $S_{n,t}$, summarized in Proposition~\ref{prop:R-S-cross-moments} on their mixed moments. This is proved in Section~\ref{sec:RS-moments}, with groundwork laid in Sections~\ref{sec:small-gaps} and~\ref{sec:berry-esseen} and making use of auxiliary results stated in Appendix \ref{sec:appendix}.
     \item To study the joint distribution of  $R_{n,t}$ and $S_{n,t}$,
     we exploit the fact that both can be expressed as sum-type statistics of \emph{small gaps}, that enjoy a crucial conditional independence structure which we clarify in Section~\ref{sec:small-gaps}. This study of small gaps will also lead to a short proof of Theorem~\ref{thm:small-gap}, given in Section~\ref{sec:smallest-gap}.
     \item The ``unconditioning'' is achieved by a sort of Hermite--Edgeworth expansion, stated in Proposition~\ref{prop:clt-with-error}. Combined with Proposition~\ref{prop:R-S-cross-moments} to control the remaining error terms in the expansion leads to the proof of Theorem~\ref{thm:n-b-e}, given in Section~\ref{sec:hermite}.
     \item To prepare for all of the above, we first collect some results (some known, some new, making use of ideas from~\cite{pvz}) on moments of $N_t$ and $M_n$ in Section~\ref{sec:means}.
 \end{itemize}
Following the proofs of our main theorems, the proofs of Corollaries \ref{cor:cmj} and \ref{cor:brw} are given in Section \ref{sec:hermite}.

\paragraph{Comparison to the Dirichlet process and uniform spacings.}  A natural comparator to the Kakutani problem is the   Dirichlet partition of the interval generated by $n$ $\unif{0}{1}$ random variables, which can also be generated sequentially, like the Kakutani process, but  one splits an interval chosen at random with probability proportional to its length (rather than always the longest). Denote the maximal spacing in the Dirichlet process after $n$~divisions by $M_n^D$. A result of L\'evy from 1939 shows that $n M_n^D - \log n$ has a Gumbel limit, and Slud~\cite{slud} showed that $n M_n^D / \log n \to 1$, a.s., as $n \to \infty$. 

Let $m_n^D$ denote the length of the smallest gap in the $n$-division Dirichlet process. A direct calculation
shows that $\Pr ( m_n^D \geq x ) = (1-(n+1) x)^n$, $x \in [0,\frac{1}{n+1}]$, from which one can show
\begin{equation}
\label{eq:dirichlet-min}
\sup_{x \in \R} \left| \Pr \left( 
 {n^2 m_n^D} \geq x \right) - \re^{-x} 
\right| \leq  \frac{C}{n} , \text{ for every } n \in \N, \end{equation}
and this bound is of the optimal order in~$n$. 
It follows from~\eqref{eq:dirichlet-min} that $n^2 m_n^D$
 converges to a unit-mean exponential, roughly half the smallest gap in the Kakutani process. Some intuition for this comes from observing that in the Dirichlet process, one is typically splitting a gap of length on average half the size of $M_n$, the length split in the Kakutani process.

\paragraph{Other order statistics.} We expect that one can obtain some information about the length of near-maximal gaps, or near-minimal gaps, using our method and some extra work. It would be of interest to obtain results for more general order statistics of spacings, but it is not clear to us how to do this.

\section{Means, variances, and moment bounds}
\label{sec:means}

In this section we study moments of the random variables $M_n$ and $N_t$.
For $t \in (0,\infty)$, let $\mu (t) := \Exp N_t$ and $v(t) := \Var N_t$.
Since $N_t =0$, a.s., for $t \geq 1$, we have $\mu(t) = v(t) = 0$ for all $t \geq 1$,
so of interest is only $t \in (0,1)$.
The  following exact results are known.

\begin{proposition}[Lootgieter, 1977; van Zwet, 1978]
\label{prop:variance}
It holds that
\begin{equation}
\label{eq:mu-t}
\mu (t) = \left( \frac{2}{t} - 1 \right) \1{ 0 < t < 1} .
\end{equation}
Moreover, with $s_0 := 8 \log 2 - 5 \approx 0.545177$, it holds that
\begin{equation}
\label{eq:var-t}
 v(t) = \begin{cases} \displaystyle \frac{s_0}{t} & \text{if } 0 < t \leq 1/2 , \\[1em]
\displaystyle  2 + \frac{2 - 8 \log t}{t} - \frac{4}{t^2} &  \text{if } 1/2 < t < 1 .\end{cases} \end{equation}
\end{proposition}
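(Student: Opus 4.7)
The plan is to exploit the fundamental self-similarity~\eqref{eq:rde-U} to derive recursive functional equations for $\mu(t)$ and then $v(t)$, and solve them explicitly.

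For the mean, taking expectations in~\eqref{eq:rde-U} and using that $U$ and $1-U$ have the same law gives $\mu(t) = 1 + 2\int_0^1 \mu(t/u)\,\ud u$ for $t \in (0,1)$. Since $\mu(s) = 0$ for $s \geq 1$, the integrand vanishes on $(0, t]$, and the substitution $v = t/u$ recasts the identity as $\mu(t) = 1 + 2t\int_t^1 v^{-2}\mu(v)\,\ud v$. Differentiating in~$t$ converts this into the first-order linear ODE $(t\mu(t))' = -1$, whose general solution is $\mu(t) = C/t - 1$; plugging the ansatz back into the integral equation at a single value (for instance $t=1/2$, which yields the identity $2C - 1 = 3C/2$) will pin $C = 2$, giving~\eqref{eq:mu-t}.

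For the variance, conditioning on $U_1$ and using the conditional independence of $N^{(1)}_{t/U_1}$ and $N^{(2)}_{t/(1-U_1)}$, the law of total variance yields
\[ v(t) = 2\,\Exp\bigl[v(t/U)\bigr] + \Var\bigl[\mu(t/U) + \mu(t/(1-U))\bigr], \quad U \sim \unif{0}{1} . \]
Using~\eqref{eq:mu-t}, the variance-of-conditional-mean term reduces to elementary integrals: a short computation gives $\Var\mu(t/U) = 1/(3t^2) - t/3$, and $\Cov[\mu(t/U), \mu(t/(1-U))]$ equals $-1/(3t^2) + 2t/3$ for $t \in (0, 1/2]$ (via an integral over $u \in (t, 1-t)$, the range on which both factors are non-zero) and $-(1/t-1)^2$ for $t \in (1/2, 1)$ (the product $\mu(t/U)\mu(t/(1-U))$ then vanishing a.s.). These combine to give $\Var[\mu(t/U) + \mu(t/(1-U))] = 2t/3$ for $t \in (0, 1/2]$, and an analogous explicit expression for $t \in (1/2, 1)$.

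It then remains to verify~\eqref{eq:var-t} by substitution. For $t \in (0, 1/2]$ the integral $\Exp[v(t/U)]$ will split into contributions from $u \in (t, 2t]$ (where $t/u \in (1/2, 1)$ and so the logarithmic branch of~\eqref{eq:var-t} applies) and from $u \in (2t, 1]$ (where $v(t/u) = s_0 u/t$); using $\int_1^2 w\log w\,\ud w = 2\log 2 - 3/4$, the logarithmic contributions cancel against the precise value $s_0 = 8\log 2 - 5$ to yield $\Exp[v(t/U)] = s_0/(2t) - t/3$, and hence $v(t) = s_0/t$. The case $t \in (1/2, 1)$ is handled similarly, with only the logarithmic branch of~\eqref{eq:var-t} appearing in the integral. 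The hard part will be the detailed bookkeeping in this variance verification: the two-piece structure of~\eqref{eq:var-t} forces a splitting of integrals, and the cancellation of all $\log 2$ terms depends critically on the precise form of $s_0 = 8\log 2 - 5$. Uniqueness of the solutions is then standard, e.g.\ by iteration, or equivalently by noting that the recursion determines $v$ on $(2^{-k-1}, 2^{-k}]$ from its values on intervals with larger~$t$, pinned by the behaviour at $t = 1^-$.
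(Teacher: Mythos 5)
The paper does not prove Proposition~\ref{prop:variance}: it is cited as known, and Remark~\ref{rem:variance} attributes it to van~Zwet and Lootgieter and notes that both original proofs ``go by an analysis of the recursion~\eqref{eq:rde-U}''. Your proposal reconstructs a proof of exactly that type, so it matches the route the paper indicates, and the details check out. In particular: the integral equation $\mu(t) = 1 + 2t\int_t^1 v^{-2}\mu(v)\,\ud v$ is a correct consequence of~\eqref{eq:rde-U} and the symmetry $U \eqd 1-U$; differentiating does give $(t\mu(t))'=-1$; the law of total variance with conditional independence of $N^{(1)}_{t/U}$, $N^{(2)}_{t/(1-U)}$ given $U$ gives $v(t) = 2\Exp[v(t/U)] + \Var[\mu(t/U)+\mu(t/(1-U))]$; and I have verified $\Var\mu(t/U) = \frac{1}{3t^2} - \frac{t}{3}$, $\Cov[\mu(t/U), \mu(t/(1-U))] = -\frac{1}{3t^2} + \frac{2t}{3}$ on $(0,1/2]$, hence $\Var[\mu(t/U)+\mu(t/(1-U))]=2t/3$ there, and that substituting $v(t)=s_0/t$ (for $t\le 1/2$) into $\Exp[v(t/U)]$ produces, via $\int_1^2 w\log w\,\ud w = 2\log 2 - \tfrac34$, the self-consistency equation $4s_0 = 32\log 2 - 20$, i.e.\ $s_0 = 8\log 2 -5$. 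I also checked the $t\in(1/2,1)$ branch directly: with $\Cov = -(1/t-1)^2$ the same substitution $w=u/t$ (now over $w\in(1,1/t)$) reproduces $2 + (2-8\log t)/t - 4/t^2$, so~\eqref{eq:var-t} is indeed a fixed point of the recursion on the whole interval.

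Two small caveats. First, passing from the integral equation to the ODE requires a priori local boundedness of $\mu$ (and later $v$), which follows from Lemma~\ref{lem:N-moments}; this is worth saying explicitly. Second, your description of uniqueness as a dyadic recursion is slightly off: the integral $\int_t^1 v(w)w^{-2}\,\ud w$ involves $v$ on \emph{all} of $(t,1)$, not just on $(2t,1)$, so the determination is not stepwise across dyadic scales. A cleaner argument is that the difference $D$ of two solutions satisfies the homogeneous equation $D(t) = 2t\int_t^1 D(w)w^{-2}\,\ud w$, whence (by the same ODE computation) $D(t)=C/t$; substituting back yields $C/t = C(1/t - t)$ for all $t\in(0,1)$, forcing $C=0$.
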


\begin{remark}
\label{rem:variance}
    Proposition~\ref{prop:variance}
    is due, independently, to van Zwet~\cite{vanzwet} (submitted February~1977) and Lootgieter~\cite{lootgieter-77b} (September 1977).
    Proposition 1.2 of~\cite[p.~396]{lootgieter-77b} covers both results for $\mu$ and $v$, while \cite{vanzwet} has the result for $\mu$, and showed that $v(t) = v(1/2)/(2t)$ for $0< t \leq 1/2$, 
but had not evaluated $v(1/2) =  16 \log 2 - 10$. 
Both proofs go by an analysis of the recursion~\eqref{eq:rde-U}. 
The full result for~$v$ was rediscovered by Pyke \& van~Zwet~\cite[p.~392]{pvz}. 
Furthermore, $v(t)$ coincides with the quantity $v_e (1/t,0)$
in~\cite{si}, which  satisfies the same integral equation, and then~\eqref{eq:var-t} can be found in~\cite[pp.~75, 83]{si}.
\end{remark}

To prepare for our later arguments, we build on analysis of~\cite{pvz} to state some estimates for
(higher) moments of $N_t$ (Lemma~\ref{lem:N-moments}) and for moments of $M_n$ (Lemma~\ref{lem:M-moments}).
The intuition in both cases is that the variables are concentrated about their respective means,
namely $N_t \approx 2/t$ (for small~$t$) and $M_n \approx 2/n$ (for large~$n$). 
The following rough, but useful, upper bounds on the moments of $N_t$ are derived directly from results in~\cite{pvz}.

\begin{lemma}
\label{lem:N-moments}
For each $k \in \N$ there exists $C_k \in \RP$ with  $\Exp ( N_t^k ) \leq C_k t^{-k}$, for all $0 < t \leq 1$.
\end{lemma}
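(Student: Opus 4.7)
My plan is induction on $k$, combining the exact first- and second-moment formulas of Proposition~\ref{prop:variance} with the recursive distributional identity~\eqref{eq:rde-U}. The base cases $k \in \{1,2\}$ are immediate from Proposition~\ref{prop:variance}: writing $\mu_j(s) := \Exp N_s^j$, one has $\mu_1(t) = 2/t - 1 \leq 2/t$, and $\mu_2(t) = v(t) + \mu(t)^2 \leq (s_0 + 4)/t^2$ for $0 < t \leq 1$, using the explicit forms of $\mu$ and $v$.

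For the inductive step, I would assume $\mu_j(s) \leq C_j s^{-j}$ for all $s \in (0,1]$ and all $1 \leq j \leq k - 1$, with the convention $\mu_j(s) = 0$ for $s \geq 1$. Expanding the trinomial in~\eqref{eq:rde-U} and conditioning on $U$, using that $N^{(1)}$, $N^{(2)}$, $U$ are independent, yields
\begin{align*}
\mu_k(t) = \sum_{a+b+c=k} \binom{k}{a,b,c} \Exp\bigl[\, \mu_a(t/U)\, \mu_b(t/(1-U)) \,\bigr].
\end{align*}
I would split this sum into the two \emph{diagonal} terms indexed by $(a,b,c) = (k,0,0)$ and $(0,k,0)$, and the remaining \emph{cross} terms (all with $a, b \leq k-1$). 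The cross terms are bounded via the inductive hypothesis together with the beta integral $\Exp[U^a(1-U)^b] = B(a+1,b+1)$, so that their total contribution is at most $D_k t^{-k}$, absorbing the lower powers $t^{-(a+b)}$ into $t^{-k}$ via $t \leq 1$. The two diagonal terms contribute $2 \Exp \mu_k(t/U)$, which by the substitution $s = t/u$ equals $2 \int_t^1 \mu_k(s) \, (t/s^2) \, ds$.

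Setting $f(t) := t^k \mu_k(t)$ turns the resulting bound into
\begin{align*}
f(t) \leq 2 t^{k+1} \int_t^1 \frac{f(s)}{s^{k+2}}\, ds + D_k .
\end{align*}
The crucial point is that $\int_t^1 s^{-k-2}\, ds \leq 1/((k+1) t^{k+1})$, which produces a prefactor $2/(k+1)$ strictly less than $1$ for $k \geq 2$. Letting $g(t) := \sup_{s \in [t,1]} f(s)$, which is non-increasing in $t$, the displayed bound propagates to $g(t) \leq \frac{2}{k+1} g(t) + D_k$, whence $g(t) \leq (k+1) D_k / (k-1)$ and the induction closes.

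The main obstacles I anticipate are twofold. First, the crude inequality $(a+b+c)^k \leq 3^{k-1}(a^k+b^k+c^k)$ produces a prefactor $2 \cdot 3^{k-1}/(k+1)$ which exceeds $1$ for $k \geq 2$, preventing the Gronwall step from closing; hence the need to extract the diagonal terms separately as described above. Second, the Gronwall manipulation presupposes the a priori finiteness of $g(t)$, which can be secured by a preliminary truncation argument on $N_t$ or by invoking a rough moment bound already available in~\cite{pvz} (which, as the statement of the lemma hints, supplies the bound essentially directly).
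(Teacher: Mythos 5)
Your proposal is correct (modulo the a~priori finiteness point, which you correctly flag and address), but it takes a genuinely different route from the paper. The paper's proof is essentially a citation: it combines Lemma~2.1 of~\cite{pvz} (monotonicity of $\Exp N_t^k$ in~$t$, giving finiteness for each fixed~$t_0$) with Theorem~2.2 of~\cite{pvz} (a bound on the cumulants of $N_t$ for small~$t$) together with the standard algebra converting cumulant bounds into moment bounds, and then patches the two ranges $t \leq 1/k$ and $t \geq 1/k$. Your approach instead builds the moment bound directly from the recursive distributional identity~\eqref{eq:rde-U} by multinomial expansion, isolating the two diagonal terms that reproduce $\mu_k$, converting the result into an integral inequality for $f(t):=t^k\mu_k(t)$, and closing a Gronwall-type loop. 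The two decisive ingredients you correctly identified are (a)~extracting the diagonal terms rather than applying a crude power-mean inequality, so that the contraction factor is $2/(k+1)<1$ rather than something exceeding~$1$; and (b)~the a~priori finiteness of $\Exp N_t^k$ for each fixed $t>0$, without which the self-referential bound $g(t)\le\frac{2}{k+1}g(t)+D_k$ is vacuous. For~(b) you still lean on~\cite{pvz} (or a separate truncation argument), so the proof is not fully independent, but it is more self-contained and more explicit than the paper's, and avoids the cumulant-to-moment translation entirely. As a small remark: your base cases $k\in\{1,2\}$ are both covered by Proposition~\ref{prop:variance}, but in fact only $k=1$ is needed, since $2/(k+1)<1$ already at $k=2$, so the induction could begin there.
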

\begin{proof}
Let $k \in \N$.
It follows from~\eqref{eq:nt-def} that 
$\Pr ( N_t = 0) = 1$ for $t \geq 1$, and 
$\Pr ( 1 \leq N_t \leq N_s < \infty ) =1$ for all $0 < s \leq t < 1$,
so that $\Exp (N_t^k)$ is non-increasing in $t > 0$.
Lemma~2.1 of~\cite[p.~385]{pvz} shows that, for every $t_0 > 0$,
\begin{equation}
\label{eq:big-t-moment-bound}
\sup_{t \geq t_0} \Exp ( N_t^k ) = \Exp ( N_{t_0}^k ) < \infty .
\end{equation}
Moreover, Theorem~2.2 of~\cite[p.~386]{pvz} and the algebra
relating cumulants to moments~\cite[pp.~266--7]{moran} shows that there is a constant $C_k \in \RP$
such that 
\begin{equation}
\label{eq:small-t-moment-bound}
\Exp ( N_t^k ) \leq C_k t^{-k}, \text{ for } 0 < t \leq \frac{1}{k} .
\end{equation}
Combining~\eqref{eq:small-t-moment-bound}
with the $t_0 = 1/k$ case of~\eqref{eq:big-t-moment-bound}, we verify the statement in the lemma.
\end{proof}

Next are bounds on the moments of $M_n$.

\begin{lemma}
\label{lem:M-moments}
For each $k \in \N$ there exists $C'_k \in \RP$ with  $\Exp ( M_n^k ) \leq C'_k n^{-k}$, for all $n \in \N$.
\end{lemma}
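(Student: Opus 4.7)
The plan is to deduce the moment bound for $M_n$ from the moment bound for $N_t$ already established in Lemma~\ref{lem:N-moments}, using the inversion relation~\eqref{eq:M-N-inverse}.

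Since $M_n \in (0,1]$ a.s., I would start from the standard identity
\[ \Exp ( M_n^k ) = \int_0^1 k t^{k-1} \Pr ( M_n > t ) \, \ud t = \int_0^1 k t^{k-1} \Pr ( N_t > n ) \, \ud t , \]
where the second equality uses~\eqref{eq:M-N-inverse}. The intuition is that $M_n \approx 2/n$, so the bulk of the contribution should come from $t$ of order $1/n$, and one wants the tail $t \gg 1/n$ to make a negligible contribution. This suggests splitting the integral at $t_0 := 2/n$.

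For the portion $t \in (0, 2/n]$, I would simply use the trivial bound $\Pr(N_t > n) \leq 1$ to obtain $\int_0^{2/n} k t^{k-1} \, \ud t = (2/n)^k$. For the portion $t \in (2/n, 1]$, I would apply Markov's inequality with exponent $k+1$, namely $\Pr(N_t > n) \leq n^{-(k+1)} \Exp ( N_t^{k+1} )$, and bound the latter via Lemma~\ref{lem:N-moments} to get $\Pr(N_t > n) \leq C_{k+1} t^{-(k+1)} n^{-(k+1)}$. The resulting integral
\[ \int_{2/n}^1 k t^{k-1} \cdot \frac{C_{k+1}}{t^{k+1} n^{k+1}} \, \ud t = \frac{k C_{k+1}}{n^{k+1}} \int_{2/n}^1 t^{-2} \, \ud t \leq \frac{k C_{k+1}}{2 n^k} \]
is of the required order. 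Combining the two pieces gives $\Exp(M_n^k) \leq C'_k n^{-k}$ with $C'_k := 2^k + k C_{k+1}/2$.

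There is no real obstacle here: the only mildly delicate point is the choice of the Markov exponent, which must strictly exceed $k$ so that the integrand $t^{-2}$ is integrable at $t=1$ (after the $t^{k-1}$ factor cancels), and so that $\Pr(N_t > n)$ decays faster than $t^{-k}$ and makes the tail contribution of the same order as the bulk. Using exponent $k+1$ achieves both simultaneously, and the existence of $C_{k+1}$ is exactly what Lemma~\ref{lem:N-moments} supplies.
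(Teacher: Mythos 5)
Your proof is correct and follows essentially the same route as the paper: integration by parts for $\Exp(M_n^k)$, the inversion relation~\eqref{eq:M-N-inverse}, and Markov's inequality with exponent $k+1$ fed by Lemma~\ref{lem:N-moments}. The only difference is the split point ($2/n$ versus the paper's $1/(n+1)$), which is inconsequential.
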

\begin{proof}
Let $k \in \N$. By Lemma~\ref{lem:N-moments} and Markov's inequality, for $0< t \leq 1$ and $n \in \N$,
\begin{equation}
\label{eq:M-moments-2} \Pr ( N_t \geq n ) \leq \frac{\Exp ( N_t^{k+1} )}{n^{k+1}}
\leq \frac{C_{k+1}}{n^{k+1}} \cdot t^{-k-1}  .\end{equation}
From the  integration by parts formula for moments~\cite[p.~75]{gut}, combined with~\eqref{eq:M-N-inverse} and the fact that
$\Pr ( \frac{1}{n+1} \leq M_n \leq 1) =1$ for all $n \in \N$, we obtain
\begin{align}
\label{eq:M-moments-1}
\Exp ( M_n^k ) & = k \int_0^1 t^{k-1} \Pr ( M_n > t ) \ud t \nonumber\\
& \leq k \int_0^{(n+1)^{-1}} t^{k-1} \ud t  + k \int_{(n+1)^{-1}}^{1} t^{k-1} \Pr ( N_t \geq n ) \ud t .\end{align}
Hence from~\eqref{eq:M-moments-1} and~\eqref{eq:M-moments-2}, 
for $n \in \N$,
\begin{align*}
\Exp ( M_n^k ) & \leq 
\frac{1}{(n+1)^k} + \frac{k C_{k+1}}{n^{k+1}} \int_{(n+1)^{-1}}^{1}   t^{-2}  
\ud t \leq \bigl( 1 + 2 k C_{k+1} \bigr) n^{-k},
\end{align*}
which yields the claimed bound, with $C'_k := 1 + 2k C_{k+1}$.
\end{proof}

We turn to centred moments of $M_n$;
the intuition here, from the CLT in
Proposition~\ref{prop:M-clt},
is that  $\sqrt{n} ( n M_n - 2 )$
is tight. The precise statement that we need
is the following, which exploits some further ideas from Pyke \& van Zwet~\cite{pvz}.

\begin{lemma}
    \label{lem:M-deviations}
    There is a constant $B_0 < \infty$ such that
\begin{equation}
    \label{eq:nM-claim}
  \sup_{n \in \N} \left( \sqrt{n} \Exp | n M_n - 2| \right) \leq B_0.
\end{equation}   
\end{lemma}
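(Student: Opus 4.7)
The plan is to bound $\Exp|nM_n - 2|$ via the layer-cake identity
\[ \Exp|nM_n - 2| = \int_0^\infty \Pr \bigl( |nM_n - 2| > s \bigr) \ud s, \]
using the inversion~\eqref{eq:M-N-inverse} to rewrite each tail event as one involving $N_u$, namely
\[ \Pr \bigl( |nM_n - 2| > s \bigr) \leq \Pr \bigl( N_{(2+s)/n} \geq n + 1 \bigr) + \Pr \bigl( N_{(2-s)/n} \leq n \bigr) \1{s < 2} , \]
and then controlling each term by Chebyshev's inequality using the closed-form expressions for $\mu$ and $v$ from Proposition~\ref{prop:variance}.

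For $u = (2 \pm s)/n$, straightforward calculation from $\mu(u) = 2/u - 1$ yields $n + 1 - \mu((2+s)/n) = ns/(2+s) + 2$ and $\mu((2-s)/n) - n = (s(n+1)-2)/(2-s)$; each is of order $ns$ for moderately sized $s$. Combined with $v(u) = s_0/u$ on $u \in (0,1/2]$, one-sided Chebyshev yields an estimate of the form
\[ \Pr \bigl( |nM_n - 2| > s \bigr) \leq \frac{C (2+s)}{ns^2}, \]
for a constant $C>0$, valid for $s$ in a range of the form $[c_0/n, n/2]$ (and, in the lower tail, also $s < 2$). This bound is informative (i.e., at most $1$) only for $s \geq a_n := \sqrt{c_1/n}$, for a suitable $c_1 > 0$.

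To assemble, split the integral at $a_n$ and at $n/2$. On $[0,a_n]$, the trivial bound $\Pr \leq 1$ contributes $a_n = O(n^{-1/2})$. On $[a_n, n/2]$, the Chebyshev estimate contributes $\int C(2+s)/(ns^2) \ud s = O(n^{-1/2}) + O(n^{-1}\log n)$ (the logarithm arising from the $1/s$ term, restricted to $s \leq n/2$). On $[n/2, n-2]$ (where $u > 1/2$), the second case of the variance formula in Proposition~\ref{prop:variance} gives $v(u) \leq C'$; combined with $n+1-\mu(u) \geq n-2$, Chebyshev yields $\Pr \leq C''/n^2$, contributing $O(n^{-1})$. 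For $s \geq n-2$ the upper-tail event is empty. Summing, $\Exp|nM_n-2| = O(n^{-1/2})$, so multiplying by $\sqrt n$ yields the desired bound $B_0 < \infty$.

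The key delicate point is the matching between the trivial-bound regime ($s \lesssim n^{-1/2}$) and the Chebyshev regime: the threshold $a_n \asymp n^{-1/2}$ is the unique scale at which both contributions balance, producing the sharp $n^{-1/2}$ rate. The other transition, namely the switch to the second variance formula for $u \in (1/2,1)$, contributes only lower-order terms and requires only routine care.
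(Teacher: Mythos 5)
Your proposal is correct, and the core machinery is the same as in the paper: layer-cake representation, the inversion $\Pr(M_n>t)=\Pr(N_t>n)$, Chebyshev using the exact formulas for $\mu(t)$ and $v(t)$ from Proposition~\ref{prop:variance}, and balancing the trivial-tail and Chebyshev-tail contributions at scale $s\asymp n^{-1/2}$ (the paper does the tail integral in the substituted variable $y = s\sqrt{n}$, but this is cosmetic). Where you genuinely diverge is in the large-$s$ regime. The paper truncates at $|nM_n-2|\leq n^{1/6}$ (so that $t = 2n^{-1} + n^{-3/2}y$ stays below $1/2$ and the formula $v(t)=s_0/t$ applies throughout), and controls the complementary event by Markov's inequality together with the twelfth-moment bound $\Exp(M_n^{12})=O(n^{-12})$ from Lemma~\ref{lem:M-moments}. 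You instead push the Chebyshev argument through the transition $u=1/2$, using the boundedness of $v(u)$ on $(1/2,1)$ and the crude lower bound $n+1-\mu(u)\geq n-2$ there. Your route avoids invoking Lemma~\ref{lem:M-moments} entirely, which is a small but genuine simplification in the dependency structure; the trade-off is a little extra case analysis near $u=1/2$. One minor slip: the range where $u=(2+s)/n>1/2$ is $s\in(n/2-2,\,n-2)$, not $[n/2,\,n-2]$, but this does not affect the $O(n^{-1})$ contribution from that piece.
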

\begin{proof}
Since $(n+1) M_n \geq 1$, a.s., 
it holds that
$nM_n - 2 = (n+1) M_n -2 - M_n$ satisfies
\begin{equation}
    \label{eq:M-bounds}
    -(n+2) M_n \leq -1 -M_n \leq nM_n -2 \leq n M_n , \text{ for all } n \in \N.
    \end{equation}
By~\eqref{eq:M-bounds} we see that 
$| n M_n - 2 | \leq (n+2) M_n$, a.s. Then,
by Markov's inequality and the fact that $\Exp ( M_n^k ) = O( n^{-k})$ from Lemma~\ref{lem:M-moments},
we obtain
\[ \Pr ( | n M_n - 2 | > n^{1/6} ) \leq \frac{(n+2)^{12} \Exp ( M_n^{12})}{n^2} \leq C n^{-2}, \text{ for all } n \in \N.\]
Moreover, since  $| n M_n -2 | \leq n+2$, a.s., it follows that
\begin{align}
\label{eq:M_n-central-mean-big}
  \sqrt{n} \Exp \bigl(  | n M_n - 2 | \1 { | n M_n - 2 | > n^{1/6} } \bigr)
& \leq (n+2) \sqrt{n} \Pr ( | n M_n - 2 | \geq n^{1/6} ) \nonumber\\
& = O ( n^{-1/2} ), \text{ as } n \to \infty.
\end{align}

Next we follow~\cite[pp.~400--1]{pvz}. Let   $n \in \N$;
recall that $\Var N_t = v(t)$ and, from~\eqref{eq:mu-t}, that $\Exp N_t = \mu (t) = \frac{2}{t} -1$.
From the inversion relation~\eqref{eq:M-N-inverse} and 
Chebyshev's inequality, 
\[ \Pr (  M_n > t ) = \Pr \left( \frac{N_t - \Exp N_t}{\sqrt{ \Var N_t}} > \frac{n - \mu (t)}{\sqrt{v(t)}} \right)
\leq \frac{v(t)}{(n-\mu(t))^2} , \text{ for } \frac{2}{n+1} < t < 1.
\]
Similarly,
\[ \Pr (  M_n \leq t ) = \Pr \left( \frac{N_t - \Exp N_t}{\sqrt{ \Var N_t}} \leq - \frac{\mu (t) - n}{\sqrt{v(t)}} \right)
\leq \frac{v(t)}{(n-\mu(t))^2} , \text{ for } 0 < t < \frac{2}{n+1}.
\]
In particular, taking $t = 2n^{-1} + n^{-3/2} y$,
which, for $0 \leq y \leq n^{2/3}$ has $t \in (0,1/2)$
for all $n > n_0 := 6^{6/5}$, the formula $v(t) = s_0/t$ (with $s_0=8\log 2 -5$)
from Proposition~\ref{prop:variance} yields
\begin{align}
\label{eq:n-M-bound-1}
\Pr ( n M_n > 2 + n^{-1/2} y ) & \leq \frac{s_0 n}{(2  + n^{-1/2} y)(n+1 - \frac{2n}{2 + n^{-1/2} y} )^2} \nonumber\\
& = \frac{s_0}{(2  + n^{-1/2} y) n (\frac{1}{n} + \frac{n^{-1/2}y}{2 + n^{-1/2} y} )^2} \nonumber\\
& \leq \frac{s_0 (2  + n^{-1/2} y)}{y^2}, \text{ for all } 0 \leq y \leq n^{2/3}.
\end{align}
Similarly, taking $t = 2n^{-1} - n^{-3/2} y$,
for $0 \leq y \leq 2 \sqrt{n}$, we get, for $n > 4$,
\begin{align*}
\Pr ( n M_n < 2 - n^{-1/2} y ) & \leq \frac{s_0 n}{(2  - n^{-1/2} y)(n+1 - \frac{2n}{2 - n^{-1/2} y} )^2} \\
& = \frac{s_0}{(2  - n^{-1/2} y) n (\frac{1}{n} - \frac{n^{-1/2}y}{2 - n^{-1/2} y} )^2} .
\end{align*}
If $y \geq 2/\sqrt{n}$ and $n >4$, then 
$\frac{n^{-1/2}y}{2 - n^{-1/2} y} \geq \frac{1}{n-1} > \frac{1}{n}$,
so we obtain
\begin{equation}
\label{eq:n-M-bound-2}
\Pr ( n M_n < 2 - n^{-1/2} y ) 
\leq  \frac{ s_0 (2  - n^{-1/2} y)}{y^2}, \text{ for all } 2 /\sqrt{n} \leq y \leq 2 \sqrt{n}.
\end{equation}
Summing~\eqref{eq:n-M-bound-1} and~\eqref{eq:n-M-bound-2}, we conclude that, for all $n > n_0$,
\[ \Pr ( | nM_n - 2 | \geq n^{-1/2} y ) \leq \frac{4 s_0 }{y^2}
, \text{ for all } 2 /\sqrt{n} < 1  \leq y \leq n^{2/3}. \]
For a random variable $X \in \RP$ and a constant $a \in (0,\infty)$, we have~(e.g.~\cite[p.~75]{gut})
\[ \Exp ( X \1 { X \leq a} ) = \int_0^\infty \Pr ( X \1{ X \leq a } > y ) \ud y 
\leq \int_0^a \Pr (X > y ) \ud y,\]
which, applied with $X = \sqrt{n} | nM_n -2|$ and $a = n^{2/3}$, yields 
\begin{align*} 
\sup_{n\in \N} \sqrt{n} \Exp \bigl(   | n M_n - 2 |  \1 { | n M_n -2 | \leq n^{1/6} } \bigr) 
& 
\leq \sup_{n \in \N} \int_0^{n^{2/3}} \Pr ( | nM_n - 2 | > n^{-1/2} y ) \ud y \\
& 
\leq 1 + \int_1^\infty \frac{4s_0}{y^2} \ud y 
< \infty.
\end{align*}
Combining the above bound with~\eqref{eq:M_n-central-mean-big} completes the proof.
\end{proof}

The last result of this section is more technical in nature, concerning moments of harmonic sums of the~$M_n$; it plays an important role in the sections below.

\begin{corollary}
\label{cor:reciprocal-moments}
There is a constant $B_1 < \infty$ such that,
for all $k \in \N$ and all $n \in \N$,
    \begin{equation}
    \label{eq:reciprocal-moments}
  \Exp \left| \frac{4^k}{n^{2k}} \left( \sum_{j=0}^{n-1} \frac{1}{M_j} \right)^k - 1 \right| \leq \frac{B_1 k}{\sqrt{n}} .
\end{equation}
\end{corollary}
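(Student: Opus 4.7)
Let $S_n := \frac{4}{n^2}\sum_{j=0}^{n-1}\frac{1}{M_j}$, so the target bound reads $\Exp|S_n^k - 1| \leq B_1 k/\sqrt{n}$. The plan is first to establish the $k=1$ case, $\Exp|S_n - 1| = O(1/\sqrt{n})$, using Lemma~\ref{lem:M-deviations}, and then to bootstrap to general $k$ via a telescoping factorization together with a deterministic upper bound on $S_n$.

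For the base case $k=1$, the identity $\sum_{j=0}^{n-1} j/2 = n(n-1)/4$ gives the decomposition
\[
S_n - 1 = \frac{4}{n^2}\sum_{j=0}^{n-1}\left(\frac{1}{M_j} - \frac{j}{2}\right) - \frac{1}{n}.
\]
Since $\sum_{i=1}^{j+1} L_{j,i} = 1$ forces $M_j \geq 1/(j+1)$, one has $1/M_j \leq j+1$, and the algebraic identity $1/M_j - j/2 = (2-jM_j)/(2M_j)$ combined with this bound yields
\[
\left|\frac{1}{M_j} - \frac{j}{2}\right| \leq \frac{j+1}{2}\,|jM_j - 2|.
\]
Taking expectations and invoking Lemma~\ref{lem:M-deviations} then gives $\Exp|1/M_j - j/2| \leq (j+1) B_0/(2\sqrt{j}) \leq B_0\sqrt{j}$ for $j \geq 1$. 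The $j=0$ term $|1/M_0 - 0| = 1$ is deterministic, and summing the rest via $\sum_{j=1}^{n-1} \sqrt{j} = O(n^{3/2})$ leads to $\Exp\bigl|\sum_{j=0}^{n-1}(1/M_j - j/2)\bigr| = O(n^{3/2})$; multiplying by $4/n^2$ and adding the $-1/n$ term gives $\Exp|S_n - 1| = O(1/\sqrt{n})$, as desired.

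For the passage to general $k$, factor
\[
S_n^k - 1 = (S_n - 1)\sum_{\ell=0}^{k-1}S_n^\ell,
\]
so $|S_n^k - 1| \leq k \cdot \max(1, S_n^{k-1})\,|S_n - 1|$. The deterministic upper bound $S_n \leq 2(n+1)/n \leq 4$, obtained from $1/M_j \leq j+1$ and $\sum_{j=0}^{n-1}(j+1) = n(n+1)/2$, controls $\max(1, S_n^{k-1})$ by a constant (depending only on $k$, absorbed into $B_1$). Combined with Step~1, this yields $\Exp|S_n^k - 1| = O(k/\sqrt{n})$ uniformly in $n$.

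The principal work is Step~1: the $O(n^{-1/2})$ rate on $\Exp|S_n - 1|$ rests on the sharp first-moment bound of Lemma~\ref{lem:M-deviations}, itself derived via the exact variance $v(t) = s_0/t$ of Proposition~\ref{prop:variance} and Chebyshev estimates on both tails of $M_n$. The multiplicative factor $(j+1)/(2\sqrt{j}) \asymp \sqrt{j}$, which seems to lose ground in moving from $|jM_j - 2|$ to $|1/M_j - j/2|$, is exactly compensated by summing $\sqrt{j}$ (giving $n^{3/2}$) and by the $1/n^2$ normalization; it is this tight accounting that yields the final $n^{-1/2}$ rate.
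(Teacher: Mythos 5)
Your overall strategy mirrors the paper's: both proofs use the identity $1/M_j - j/2 = (2-jM_j)/(2M_j)$ together with $M_j \geq 1/(j+1)$ to pass from Lemma~\ref{lem:M-deviations} to a bound of order $n^{3/2}$ on $\Exp\bigl| W_n - n^2/4 \bigr|$, where $W_n := \sum_{j=0}^{n-1}M_j^{-1}$, and then bootstrap to general~$k$. The paper runs the bootstrap as an induction on~$k$ driven by the estimate $\Exp\bigl| W_n^{k+1} - (n^2/4) W_n^k \bigr| \leq B_1 n^{2k+3/2}$, whereas you factor $S_n^k - 1 = (S_n-1)\sum_{\ell<k}S_n^\ell$ and appeal to the deterministic bound $S_n\leq 4$; these are cosmetic variations of the same idea.

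There is, however, a genuine gap in your closing step. You bound $\sum_{\ell=0}^{k-1}S_n^\ell \leq k\max(1,S_n^{k-1}) \leq k\cdot 4^{k-1}$ and then assert that the factor $4^{k-1}$ can be ``absorbed into $B_1$.'' It cannot: the statement requires one constant $B_1$ valid \emph{uniformly in $k\in\N$}, and what your argument actually delivers is $\Exp|S_n^k-1|\leq C\,k\,4^{k-1}/\sqrt{n}$, which is exponentially worse in~$k$ than the claim. You in effect acknowledge the $k$-dependence (``a constant depending only on $k$'') and then discard it, which is precisely the missing step. Interestingly, the paper's own proof has the same weak point: the induction yields $\Exp\bigl| W_n^k - (n^2/4)^k\bigr| \leq B_1 k\, n^{2k-1/2}$, and dividing by $(n^2/4)^k$ to pass to~\eqref{eq:reciprocal-moments} introduces a factor $4^k$, so the final line ``This yields~\eqref{eq:reciprocal-moments}'' does not follow as written. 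In the only downstream use, part~\ref{lem:small-gaps-K-ii} of Lemma~\ref{lem:small-gaps-K}, one has $k\leq\delta\log n$ with $\delta$ small, so $4^k\leq n^{O(\delta)}$ and the extra factor is harmless there, but the corollary as stated, with $B_1$ independent of $k$, appears to need a sharper argument --- for instance, splitting on the event $\{|S_n-1|\leq 1/k\}$ and exploiting $(1+1/k)^{k-1}\leq\mathrm{e}$ on that event, with a separate tail bound on its complement --- to genuinely remove the exponential dependence on~$k$.
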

\begin{proof}
Define
$W_n := \sum_{j=0}^{n-1} M^{-1}_j$. 
Since $\inf_{0 \leq i \leq n-1} M_i \geq M_{n-1} \geq 1/n$,
note that $W_n \leq n^2$, a.s. 
Let $k \in \ZP$, and observe that, for every $n \in \N$, 
\begin{align*}
\left| W_n^{k+1} - \frac{n^2}{4} W_n^{k}  \right|
& \leq 
\left| W_n^{k+1} - W_n^{k} \sum_{i=0}^{n-1} \frac{i}{2} \right| + \left( \frac{n}{4} \right) W_n^k  \\
& \leq 
\frac{W_n^{k}}{2}  \sum_{i=0}^{n-1}  \frac{| i M_i - 2 |}{M_i} + n W_n^k  \\
& \leq 
 n^{2k+1}  \sum_{i=0}^{n-1}  | i M_i - 2 |  + n^{2k+1} ,
\end{align*}
using the bounds $W_n \leq n^2$ and $M_i \geq 1/n$. 
By Lemma~\ref{lem:M-deviations}, we thus obtain 
\begin{equation}
    \label{eq:Wk-to-Wk+1}
 \Exp \Bigl| W^{k+1}_{n} - \frac{n^2}{4} W_n^k  \Bigr| \leq B_1 n^{2k +(3/2)} , \text{ for all } k, n \in \N, \end{equation}
 where $B_1 := 2 B_0 +1 <\infty$.
Using~\eqref{eq:Wk-to-Wk+1} and the triangle inequality, 
\begin{align*}
     \Exp \Bigl| W^{k+1}_{n} - \left( \frac{n^2}{4} \right)^{k+1}  \Bigr|
     & \leq \Exp \Bigl| W^{k+1}_{n} -\frac{n^2}{4} W_n^k   \Bigr|
     + \frac{n^2}{4} \Exp \Bigl| W^{k}_{n} -\left( \frac{n^2}{4} \right)^{k}   \Bigr| \\
     & \leq  B_1 n^{2k +(3/2)} +   {n^2} \Exp \Bigl| W^{k}_{n} -\left( \frac{n^2}{4} \right)^{k}   \Bigr|, \text{ for all } n \in \N.
\end{align*}
An induction on $k$ using the above relation, and the fact that $W_n^0 = 1$, then shows that 
$\Exp | W_n^{k} -  ( n^2/4  )^k | \leq B_1 k  n^{2k-(1/2)}$, for all $k, n \in \N$. This
yields~\eqref{eq:reciprocal-moments}.
\end{proof}

\section{Small-gap statistics}
\label{sec:small-gaps}

\subsection{Conditional independence structure and moments}
\label{sec:small-gaps-independence}

To obtain Theorem~\ref{thm:small-gap} on the smallest gap, it is not surprising that we investigate the count of small gaps and use Poisson approximation. However, our approach to studying the fine fluctuations of the \emph{largest} gap turns out
to make essential use of more detailed information about  small gaps, and the primary focus of this section is to present this detailed information. Later in this section we will then present the proof of Theorem~\ref{thm:small-gap}, the main ingredient being Corollary~\ref{cor:Knt-representation} that we state shortly.

Of course a typical gap has length about $1/n$, and   $M_n$  is of the same order
(about $2/n$; see~\eqref{eq:slln} and Proposition~\ref{prop:M-clt}); on the other hand, as Theorem~\ref{thm:small-gap} advertises, one expects to see small gaps all the way down to size around $1/n^2$.
The results in this section will give more information on small gaps, including those with lengths~$o (1/n)$.

For $g : [0,1] \to \R$, $n \in \N$, and $0 \leq s < t \leq 1$,    define the statistic 
 \begin{equation}
    \label{eq:U-n-t-def} \cK^g_{n} (s,t] := \sum_{i=1}^{n+1} g \Bigl( \frac{L_{n,i}}{t} \Bigr) \1{s < L_{n,i} \leq t} .
    \end{equation}
In the case where $g \equiv 1$, then $\cK^g_{n}$ is a counting function; we use the particular notation $K_n := \cK^g_n$, in that case. That is, for $n \in \N$ and $0 \leq s < t \leq 1$, 
\begin{equation}
    \label{eq:K-n-t-def}
 K_{n} (s,t] := \sum_{i=1}^{n+1}  \1 {  s < L_{n,i} \leq t }  ,\end{equation}
the number of gaps of size in $(s,t]$.  Write $K_{n,t} := K_{n} (0,t]$.
Some intuition for these quantities is provided by Pyke's uniform limit theorem~\eqref{eq:e-d-uniform}, a consequence of which 
is that, for $g : [0,1] \to \R$  bounded and measurable, and $0 \leq \alpha < \beta \leq 2$,  
    \begin{equation}
\label{eq:U-lln}
\lim_{n \to \infty} n^{-1} \cK^g_n \bigl( \tfrac{\alpha}{n+1},\tfrac{\beta}{n+1} \bigr]
 = \frac{1}{2} \int_{\alpha}^{\beta} g  ( u /\beta ) \ud u, \as \end{equation}
Another consequence of~\eqref{eq:e-d-uniform} is
    \begin{equation}
\label{eq:pyke-K-lln}
\lim_{n \to \infty} \sup_{t \in \bigl[ 0, \tfrac{1}{2(n+1)} \bigr]} \left| \frac{K_{n,t}}{n+1} - \frac{(n+1) t}{2} \right| = 0, \as  
\end{equation}
There are also second-order (fluctuation) results that complement~\eqref{eq:U-lln}--\eqref{eq:pyke-K-lln},
provided by~\cite{pvz}. 
However,
these results are 
targeted at \emph{typical} gaps, and are 
of limited value concerning $\cK^g_n (s,t]$ when $t \ll 1/n$.
Indeed, roughly speaking, the asymptotic~\eqref{eq:pyke-K-lln} says that, 
$K_{n,t} = O ( n^2 t) + o(n)$, a.s., 
but for $t \ll 1/n$ it is the~$o(n)$ term that dominates. 

The aim of this section is to provide a  sharper study of small gaps, which will
allow us to conclude, for example, that
$K_{n,t} = O ( n^2 t)$ even when $n t \to 0$ (see Lemma~\ref{lem:small-gaps-K} below). The additional
structure we need is provided by the following important conditional independence result.
In particular, representation~\eqref{eq:U-representation-K} shows that $K_{n,t}$
can be represented as a sum of a random number of terms involving \emph{independent} $\unif{0}{1}$ random variables.

\begin{lemma}
\label{lem:small-gaps-U}
Suppose that $0 \leq s < t \leq 1$.
There exist random variables $\theta_0, \theta_1, \ldots$ and $\gamma_0, \gamma_1, \ldots$,
such that (i)
$\gamma_0, \gamma_1, \ldots$ are i.i.d.~$\unif{\frac{s}{t}}{1}$ random variables,
independent of the~$\theta_i$;
(ii) given $M_0, M_1, \ldots$, the  
$\theta_0, \theta_1, \ldots \in \{0,1\}$ are independent with
$\Pr ( \theta_i  = 1 \mid M_0, M_1, \ldots ) = 2(t-s) / M_i$;
(iii)
for every $g : [0,1] \to \R$ and every $n \in \ZP$ for which $2nt \leq 1$,
we have the representation 
\begin{equation}
\label{eq:U-representation-theta} 
\cK^g_{n} (s,t] = \sum_{i=0}^{n-1}  \theta_i g (\gamma_i) . \end{equation}
In other words, 
for fixed $n \in \N$ and $0 \leq s < t$ with $2nt \leq 1$, we can write
\begin{equation}
\label{eq:U-representation-K} 
\cK^g_{n} (s,t] = \sum_{j=1}^{K_{n}(s,t]}  g (\upsilon_j) , \end{equation}
where $\upsilon_1, \upsilon_2, \ldots$ are i.i.d.~$\unif{\frac{s}{t}}{1}$, independent of~$K_{n}(s,t]$.
\end{lemma}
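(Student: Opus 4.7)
My plan is to extract the conditional structure of small gaps from the recursive splitting dynamics via three key structural observations. \textbf{(a)} Under $2nt \le 1$, the max gap $M_j$ is non-increasing in $j$ and satisfies $M_j \ge 1/(j+1) \ge 1/n \ge 2t$ for $j \le n-1$ and $M_n \ge 1/(n+1) > t$ a.s.; hence any gap of length at most $t$, once created, is never re-split during the first $n$ steps. \textbf{(b)} At step $i+1$, the split of $M_i$ produces pieces of sizes $U_{i+1} M_i$ and $(1-U_{i+1}) M_i$, and writing $\tilde U_i := \min(U_{i+1}, 1-U_{i+1})$, the larger has size $(1-\tilde U_i) M_i \ge M_i/2 \ge t$, so only the smaller piece $\tilde U_i M_i$ can lie in $(0, t]$. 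Combined with (a), the gaps in $(s, t]$ at time $n$ are in bijection with the steps $i+1 \le n$ at which $\tilde U_i M_i \in (s, t]$. \textbf{(c)} Since $\tilde U_i \sim \unif{0}{1/2}$ is independent of $\mathcal{F}_i := \sigma(U_1, \ldots, U_i)$ and $M_i \in \mathcal{F}_i$, one has $\Pr(\tilde U_i M_i \in (s, t] \mid \mathcal{F}_i) = 2(t-s)/M_i$, and on this event the conditional distribution of $\tilde U_i M_i/t$ is $\unif{s/t}{1}$, \emph{independent of $\mathcal{F}_i$} (its conditional density being free of $M_i$).

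Define $\theta_i := \1{\tilde U_i M_i \in (s, t]}$, so that (a)--(b) give
$$\cK^g_n(s, t] = \sum_{i=0}^{n-1} \theta_i \, g(\tilde U_i M_i / t).$$
To produce the $\gamma_i$'s with the required joint structure, augment the probability space with an i.i.d.\ auxiliary sequence $\eta_0, \eta_1, \ldots \sim \unif{0}{1}$ independent of the process, and set $\gamma_i := \tilde U_i M_i / t$ on $\{\theta_i = 1\}$ and $\gamma_i := s/t + (1-s/t)\eta_i$ on $\{\theta_i = 0\}$. Property (iii) is then immediate (the $\{\theta_i = 0\}$ branch contributes zero); property (ii) follows from (c) by iterated conditioning (using the tower property to promote $\Pr(\theta_i = 1 \mid \mathcal{F}_i) = 2(t-s)/M_i$ to the $\sigma(M_0, M_1, \ldots)$-conditional statement via the progressive-filtration structure); and the marginal part of (i)---that each $\gamma_i \sim \unif{s/t}{1}$ and $\gamma_i \perp \theta_i$---follows because both branches give $\gamma_i \mid \mathcal{F}_i \sim \unif{s/t}{1}$.

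The main hurdle will be the joint independence claim in (i), that $(\gamma_i)_i$ is i.i.d.\ independent of $(\theta_i)_i$: since on $\{\theta_i = 1\}$ the value $\gamma_i$ encodes $\tilde U_i$, which in turn determines part of $M_{i+1}$ and hence of $\theta_{i+1}$, there is an apparent coupling to resolve. The key leverage is again observation (c): given $\theta_i = 1$ the conditional distribution of $\gamma_i$ is $\unif{s/t}{1}$ \emph{irrespective} of $M_i$, so that the ``size'' information in $\gamma_i$ decouples from the ``location'' information driving subsequent $M_j$'s, any residual coupling being absorbed by the randomness in the process beyond $(M_j)_j$ (carried, e.g., by the second-largest-gap values $L_j^{(2)}$); an inductive argument along the augmented filtration $\mathcal{H}_i := \mathcal{F}_i \vee \sigma(\eta_0, \ldots, \eta_{i-1})$ then yields the joint structure. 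The alternative form $\cK^g_n(s, t] = \sum_{j=1}^{K_n(s, t]} g(\upsilon_j)$ at the end of the lemma follows directly by listing the $\gamma_i$'s with $\theta_i = 1$ in order as $\upsilon_1, \ldots, \upsilon_{K_n(s, t]}$, which are i.i.d.\ $\unif{s/t}{1}$ independent of the count by (i) and (ii).
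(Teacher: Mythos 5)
Your construction of $(\theta_i, \gamma_i)$ via the symmetrization $\tilde U_i := \min(U_{i+1}, 1-U_{i+1})$ matches the paper's argument exactly up to notation (the paper carries both terms explicitly rather than collapsing to $\tilde U_i$, but the identification is the same), and the derivation of the telescoping representation~\eqref{eq:U-representation-theta} is correct. You have also put your finger on the one genuinely delicate point, which the paper's own proof passes over in silence: on $\{\theta_i = 1\}$ the value $\gamma_i = \tilde U_i M_i / t$ determines the sibling length $(1-\tilde U_i)M_i = M_i - t\gamma_i$, and this sibling can be a later maximal interval, so $\gamma_i$ can be recovered from $(M_j)_{j > i}$. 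The cleanest instance is $i = 0$: since $M_0 = 1$, on $\{\theta_0 = 1\}$ one has $M_1 = 1 - \tilde U_0 = 1 - t\gamma_0$, so $\gamma_0$ is an exact measurable function of $M_1$. This immediately makes $\Pr(\theta_0 = 1 \mid M_0, M_1, \ldots)$ a $\{0,1\}$-valued random variable (namely $\1{M_1 \in [1-t,1-s)}$), not the constant $2(t-s)/M_0$ claimed in~(ii), and it makes the conditional law of $\gamma_0$ given $(M_0, M_1, \theta_0 = 1)$ degenerate, so $\gamma_0$ cannot be independent of $(\theta_j)_{j\geq 1}$ as claimed in~(i), since $\Pr(\theta_1 = 1 \mid \cF_1) = 2(t-s)/M_1 = 2(t-s)/(1-t\gamma_0)$ depends on $\gamma_0$.

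The gap in your write-up is that the proposed resolution --- the assertion that the ``residual coupling is absorbed by the randomness in the process beyond $(M_j)_j$'', to be tidied by an induction along $\cH_i$ --- is not an argument; it does not engage with the dependence identified above, which persists after conditioning on the whole sequence $(M_j)_j$ and feeds, through $M_{i+1}$, directly into $\theta_{i+1}$. Nothing you have written actually establishes the joint law asserted in~(i)--(ii), and your own $i=0$ example shows that establishing it as stated is not merely unfinished but problematic. For comparison, the paper's proof is no better on this point: after defining $(\theta_i,\gamma_i)$ it asserts the distributional conclusions without addressing the coupling at all, so you have correctly located where the real work (or the real issue) lies. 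A careful treatment should replace the strong claims (i)--(ii) by what is both true and sufficient downstream --- e.g.\ $\Pr(\theta_i = 1 \mid \cF_i) = 2(t-s)/M_i$, the independence of $\gamma_i$ from $(\cF_i, \theta_i)$, and the exact representation~\eqref{eq:U-representation-theta} --- and then rework the moment calculations from those facts, rather than appeal to a joint conditional-independence structure across all of $M_0, M_1, \ldots$ that the $i=0$ step already refutes.
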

\begin{proof}
Fix $n \in \N$ and $t >0$ with $2 n t \leq 1$.
Since $2(n+1) t \leq 1$ and $(i+1) M_i > 1$, a.s., we have $M_i > 2t$ for all $i \in \{0,1,2,\ldots, n-1\}$. 
Hence splitting the interval of length $M_i$ ($0 \leq i \leq n-1$)
can never remove a gap of length in $(s,t]$, and
can create precisely
zero or one gap of length in $(s,t]$, and hence increase $\cK^g$ according to
\begin{align*}
\cK^g_{i+1} (s,t] - \cK^g_{i} (s,t] 
& = g ( U_{i+1} M_i/t )  \1 { U_{i+1}M_i \in (s, t] } \\
& {} \qquad {} +
g ( (1-U_{i+1} ) M_i /t ) \1 { U_{i+1}M_i \in [1-t, 1-s) } ,\end{align*}
recalling from~\eqref{eq:splitting-recursion}
that $U_{i+1} \sim \unif{0}{1}$ is the relative location of the
split point in the maximal interval.  Note that, since $2 n t \leq 1$, we have $s <t < 1/2$,
so that intervals $(s,t]$ and $[1-t,1-s)$ are disjoint, each of length $t-s$.
Moreover, conditional on $ U_{i+1}M_i \in (s, t]$, $U_{i+1}M_i/t$ has the
$\unif{\frac{s}{t}}{1}$ distribution;
similarly for $(1-U_{i+1} ) M_i /t$ given $U_{i+1}M_i \in [1-t, 1-s)$.
Thus we obtain the claimed representation~\eqref{eq:U-representation-theta} on setting
\begin{align*}
    ( \theta_i , \gamma_i  ) & := (1, U_{i+1} M_i/t ) \1 { U_{i+1}M_i \in (s, t] } \\
    & {} \qquad {}     +  (1, (1-U_{i+1} ) M_i /t )\1 { U_{i+1}M_i \in [1-t, 1-s) } \\
       & {} \qquad {}     +  (0, V_i )\1 { U_{i+1}M_i \notin (s,t] \cup [1-t, 1-s) },
\end{align*}
for $V_1, V_2, \ldots$ a sequence of i.i.d.~$\unif{\frac{s}{t}}{1}$ random variables (merely to ensure that $\gamma_i$
has the correct distribution even if $\theta_i = 0$).

The second expression~\eqref{eq:U-representation-K} is obtained by ignoring the terms in~\eqref{eq:U-representation-theta} where $\theta_i =0$ and re-labelling so that $\upsilon_j := \gamma_{h(j)}$ where $h(j) := \inf \{ i \in \ZP : \sum_{k=0}^i \theta_k = j\}$. 
The number of non-zero terms is exactly $K_n (s,t] = \sum_{i=0}^{n-1} \theta_i$, and the independence
structure in~\eqref{eq:U-representation-K} means that $K_n (s,t]$ is independent of the $\upsilon_j$
in~\eqref{eq:U-representation-K}.
\end{proof}

Taking $g \equiv 1$ in~\eqref{eq:U-representation-theta}
gives the following useful fact.

\begin{corollary}
    \label{cor:Knt-representation}
Suppose that $n \in \N$ and $t >0$ satisfy $2n t \leq 1$. Then 
 \begin{equation}
 \label{eq:Knt-representation} 
 K_{n,t}   = \sum_{i=0}^{n-1}  \theta_i , \text{ for } n \in \N, \end{equation}
 where, given $M_0, M_1, M_2, \ldots$, the 
 $\theta_i \in \{0,1\}$ are independent with 
 $\Pr ( \theta_i = 1 \mid M_i ) = 2t / M_i$.     
\end{corollary}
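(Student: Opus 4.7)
The plan is to deduce this as a direct specialisation of Lemma~\ref{lem:small-gaps-U}. Taking $s = 0$ in Lemma~\ref{lem:small-gaps-U}, the hypothesis $2nt \leq 1$ matches exactly, and taking $g \equiv 1$ in the representation~\eqref{eq:U-representation-theta} collapses it to $\cK^g_n(0,t] = \sum_{i=0}^{n-1} \theta_i$. Since $\cK^g_n(0,t] = K_{n,t}$ when $g \equiv 1$ (as is immediate from comparing~\eqref{eq:U-n-t-def} and~\eqref{eq:K-n-t-def}), the identity~\eqref{eq:Knt-representation} follows at once.

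It only remains to translate the conditional independence statement. Lemma~\ref{lem:small-gaps-U}(ii) gives that, conditional on the full sequence $M_0, M_1, \ldots$, the $\theta_i$ are independent with $\Pr(\theta_i = 1 \mid M_0, M_1, \ldots) = 2(t-s)/M_i = 2t/M_i$ (using $s=0$). Since this conditional probability is a function of $M_i$ alone, the tower property yields $\Pr(\theta_i = 1 \mid M_i) = 2t/M_i$, as required.

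There is no genuine obstacle — this is purely a bookkeeping corollary extracting the $g \equiv 1$, $s = 0$ case. The only thing worth noting explicitly in the write-up is the (trivial) step that conditioning on $M_i$ alone suffices to specify the marginal success probability of $\theta_i$, even though the full conditional independence is stated with respect to the $\sigma$-algebra generated by all the $M_j$.
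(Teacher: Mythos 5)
Your proposal is correct and follows exactly the paper's route: the paper derives the corollary precisely by setting $g \equiv 1$ (and $s=0$) in Lemma~\ref{lem:small-gaps-U}. The extra remark about passing from conditioning on $(M_0, M_1, \ldots)$ to conditioning on $M_i$ alone is a harmless clarification of a point the paper leaves implicit.
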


Corollary~\ref{cor:Knt-representation} is the basis for the proof of Theorem~\ref{thm:small-gap}, which uses Poisson approximation and is presented later in this section. 
For Theorem~\ref{thm:clt} we need to further develop analysis of $K_{n,t}$. 
The following result gives asymptotics for the moments of $K_{n,t}$,
which will be a key ingredient in the subsequent arguments. Part~\ref{lem:small-gaps-K-i} gives an upper bound valid for a broad range of the parameters,
  part~\ref{lem:small-gaps-K-ii} gives sharp asymptotics for a more restrictive
range of parameters, and part~\ref{lem:small-gaps-K-iii} gives a tail bound.

\begin{lemma}
\label{lem:small-gaps-K}
Suppose that $n \in \N$. Then the following hold.
\begin{thmenumi}[label=(\roman*)]
\item
\label{lem:small-gaps-K-i}
For every $t  >0$, $k \in \ZP$, and $n \in \N$ with 
$2n t \leq 1$,
\begin{equation}
\label{eq:Knt-moments-crude}
 \Exp \left( K_{n,t}^k \right) 
 \leq ( 2 n^2 t)^k \exp \left( \frac{k^2}{4n^2 t} \right).
\end{equation}
\item \label{lem:small-gaps-K-ii}
Let $\nu \in (1,\frac{3}{2})$. There exist constants
$\delta >0$ and
$B_2 < \infty$ such that, for all $n, k \in \N$, 
\begin{equation}
\label{eq:exp-Knt-asymptotics}
\sup_{t \in \bigl[ n^{-\nu}, \frac{1}{2n} \bigr]} \left| \frac{2^k}{ t^k n^{2k}} \Exp  \bigl( K^k_{n,t} \bigr) - 1 \right| 
\leq \frac{B_2 k}{\sqrt{n}} , \text{ whenever } k \leq \delta \log n. \end{equation}
\item
\label{lem:small-gaps-K-iii}
For every $t >0$ with $2n t \leq 1$, 
we have $\Pr \left(  K_{n,t} \geq 6 t n^2 \right) \leq \exp  \left( - 2 t^2 n^3 \right)$.
\end{thmenumi}
\end{lemma}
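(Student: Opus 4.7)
All three parts build on the conditional-independence representation of Corollary~\ref{cor:Knt-representation}. Setting $\cF := \sigma(M_0, \ldots, M_{n-1})$ and $p_i := 2t/M_i$, the variable $K_{n,t}$ is, conditionally on $\cF$, a sum of $n$ independent Bernoulli$(p_i)$ variables with total conditional mean $\mu_M := 2tW_n$, where $W_n = \sum_{i=0}^{n-1}M_i^{-1}$. The a.s.\ bound $M_i \geq 1/(i+1)$ yields $W_n \leq n(n+1)/2$, and hence $\mu_M \leq 2tn^2$ a.s.\ whenever $2nt \leq 1$.

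\textbf{Parts (i) and (iii).} For~\ref{lem:small-gaps-K-i}, the conditional factorial moments satisfy $\Exp ( K_{n,t}(K_{n,t}-1)\cdots(K_{n,t}-j+1) \mid \cF ) = j!\, e_j(p_0, \ldots, p_{n-1}) \leq \mu_M^j$, where $e_j$ denotes the $j$th elementary symmetric polynomial, so summation via Stirling numbers of the second kind gives $\Exp(K_{n,t}^k \mid \cF) \leq \sum_j \stirling{k}{j}\mu_M^j$, i.e., the $k$th moment of a Poisson$(\mu_M)$ random variable. A Gaussian-type bound on this Touchard polynomial of the form $\sum_j \stirling{k}{j}\mu^j \leq \mu^k \exp(k^2/(2\mu))$, combined with $\mu_M \leq 2tn^2$ a.s., then yields~\eqref{eq:Knt-moments-crude} after taking expectations; the stated constant $1/4$ in the exponent (rather than the more immediate $1/2$) is recovered by retaining the second-order correction $-\sum p_i^2 (e^\lambda-1)^2/2$ in $\log \Exp(e^{\lambda K_{n,t}}\mid\cF)$. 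For~\ref{lem:small-gaps-K-iii}, Markov applied to the conditional MGF $\Exp(e^{\lambda K_{n,t}}\mid\cF) \leq \exp(\mu_M(e^\lambda-1)) \leq \exp(2tn^2(e^\lambda-1))$ at $\lambda = \log 2$ gives $\Pr(K_{n,t} \geq 6tn^2 \mid \cF) \leq \exp((2 - 6\log 2) tn^2)$ a.s.; since $6\log 2 - 2 > 2 \geq 2tn$, we deduce $(6\log 2 - 2)tn^2 \geq 2t^2 n^3$, and the claim follows on integrating out.

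\textbf{Part (ii), the main work.} Here the goal is to replace the crude $(2tn^2)^k$ of~\ref{lem:small-gaps-K-i} by the sharp $(tn^2/2)^k$ with relative error $O(k/\sqrt n)$. Taking expectations of the Stirling identity and isolating the $j=k$ term,
\[ \Exp K_{n,t}^k = (2t)^k \Exp \bigl[ k!\, e_k ( 1/M_0, \ldots, 1/M_{n-1} ) \bigr] + \sum_{j<k} \stirling{k}{j} (2t)^j \Exp \bigl[ j!\, e_j ( 1/M ) \bigr] . \]
For the leading term, $k!\, e_k(1/M)$ differs from $W_n^k = (\sum M_i^{-1})^k$ only by a sum over $k$-tuples with coinciding indices; combinatorial counting (at most $k^2 n^{k-1}$ such tuples) together with moment control on $1/M_i$ (derivable from Lemma~\ref{lem:M-moments} and $(n+1)M_n \geq 1$) shows this ``repeat sum'' is $O(k/\sqrt n)\cdot \Exp W_n^k$ in $L^1$, while Corollary~\ref{cor:reciprocal-moments} evaluates $(2t)^k \Exp W_n^k = (tn^2/2)^k (1 + O(k/\sqrt n))$. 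For the subleading Stirling sum, the dominant $j=k-1$ contribution is $\binom{k}{2}(2t)^{k-1}\Exp W_n^{k-1} \sim \binom{k}{2}(tn^2/2)^{k-1}$, so its ratio to the leading $(tn^2/2)^k$ is $O(k^2/(tn^2))$; this is the bottleneck, and it is $O(k/\sqrt n)$ precisely when $k \leq C\cdot tn^{3/2}$. The hypotheses $t \geq n^{-\nu}$ with $\nu < 3/2$ and $k \leq \delta \log n$ enforce $tn^{3/2} \geq n^{3/2-\nu}$, which grows like a positive power of $n$ and therefore dominates $\log n$ for $\delta = \delta(\nu)$ small enough, giving~\eqref{eq:exp-Knt-asymptotics} uniformly.

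\textbf{Main obstacle.} The principal difficulty is in~\ref{lem:small-gaps-K-ii}: simultaneously matching two a priori unrelated sources of error -- index repetitions in the expansion of $(\sum M_i^{-1})^k$, and the subleading Stirling terms $\stirling{k}{j}j!e_j(p)$ for $j<k$ -- against the leading $(tn^2/2)^k$, uniformly throughout $t \in [n^{-\nu}, 1/(2n)]$. The binding case is the small-$t$ endpoint $t = n^{-\nu}$, where the logarithmic cap $k \leq \delta \log n$ is used exactly to keep the Stirling-sum relative error $k^2/(tn^2)$ below $k/\sqrt n$.
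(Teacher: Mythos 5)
Your strategy matches the paper's in its essentials, with different implementation choices for parts (i) and (iii). For part~\ref{lem:small-gaps-K-ii} — the substantive part — your Stirling-number decomposition is algebraically identical to the paper's: because the $\theta_i$ are $\{0,1\}$-valued, the $j=k$ term of the Stirling identity is exactly the sum over distinct $k$-tuples (the paper's $I_{n,k}^\circ$), and the $j<k$ terms are exactly the contribution from $k$-tuples with repeated indices. You correctly identify Corollary~\ref{cor:reciprocal-moments} as the key input for the leading term and pinpoint the small-$t$ endpoint as the binding case. For~\ref{lem:small-gaps-K-i} the paper cites Ahle's Theorem~1 via binomial MGF domination, whereas you unpack a Poisson moment bound via the Touchard identity; for~\ref{lem:small-gaps-K-iii} the paper runs Azuma--Hoeffding on the martingale $K_{m,t}-2t\sum_{j<m}M_j^{-1}$, whereas you use a conditional Chernoff bound at $\lambda=\log 2$. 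Your (iii) computation is valid: $6\log 2 - 2 > 2 \geq 2nt$, so $(6\log 2 - 2)tn^2 \geq 2t^2n^3$.

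Two imprecisions are worth fixing. First, in part~\ref{lem:small-gaps-K-i}, your remark that the constant $1/4$ ``is recovered by retaining the second-order correction'' is confused: the Touchard polynomial $T_k(\mu)=\sum_j\stirling{k}{j}\mu^j$ is increasing in $\mu$, so $T_k(\mu_M)\leq T_k(2tn^2)$, and the Poisson bound $T_k(\mu)\leq\mu^k\exp(k^2/(2\mu))$ evaluated at $\mu=2tn^2$ already yields $\exp(k^2/(4tn^2))$ with no correction. Second, and more substantively, your error accounting in~\ref{lem:small-gaps-K-ii} omits the exponentially-growing-in-$k$ factors. The repeat-sum error, measured relative to the leading $(tn^2/2)^k$, is of order $4^k k^2/n$ rather than $k^2/n$ (since $(2t)^k n^{2k-1}/(n^2t/2)^k = 4^k/n$), and the $j=k-1$ Stirling term, after applying part (i), also carries a $4^k$ from $(2n^2t)^{k-1}/(n^2t/2)^k$. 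These factors are precisely what force the paper's explicit choice $4^\delta\leq\re^{(3/4)-(\nu/2)}$, which makes $4^k k^2\, n^{\nu-3/2}\to 0$ uniformly over $k\leq\delta\log n$ (the paper's~\eqref{eq:k-n-growth}). Your stated justification — that $n^{3/2-\nu}$ dominates $\log n$ for $\delta$ small — is true but holds for \emph{any} $\delta$, so it does not explain why the cap must be logarithmic with a specific constant. As written, your part (ii) would not close until the $4^k$ accounting is made explicit.
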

\begin{proof}  
Recall the representation for $K_{n,t}$ from Corollary~\ref{cor:Knt-representation}, and  that  $\Pr ( \theta_i = 1 \mid M_i ) = 2t / M_i \leq 2 n t$
 for $i \in \{0,1,\ldots,n-1\}$, since
  $M_i > \frac{1}{i+1}$, a.s.
  Consequently, the moment generating function of~$K_{n,t}$ is dominated by that of a $\Bin{n}{2n t}$ random variable.
  Hence (see~\cite[\S 3]{ahle}) we may apply the tail bound from Theorem~1 of~\cite{ahle}, which yields
part~\ref{lem:small-gaps-K-i}.

 Next we
 prove part~\ref{lem:small-gaps-K-ii}.
For $k \in \N$
let $I_{n,k} := \{0,1,\ldots, n-1\}^k$.
By~\eqref{eq:Knt-representation},
\begin{equation}
    \label{eq:Knt-expanded-moments}
 \Exp \left( K_{n,t}^k \right)
=  \sum_{(i_1, \ldots, i_k) \in I_{n,k} }  \Exp \left( \theta_{i_1} \cdots \theta_{i_k}  \right)
.
\end{equation}
Let $I^\circ_{n,k} \subset I_{n,k}$ be the set of all $(i_1, \ldots, i_k ) \in I_{n,k}$ 
for which the~$k$ coordinates are distinct.
Then $I_{n,k}$ contains $n^k$
elements, while $I^\circ_{n,k}$ contains 
$\frac{n!}{(n-k)!}$ elements.
For $k \geq 2$, every element of $I_{n,k} \setminus I^\circ_{n,k}$
contains at least one pair of the~$k$ coordinates that
match, so
\begin{equation}
    \label{eq:I-n-k-bound}
    | I_{n,k} \setminus I^\circ_{n,k} | = n^k - \frac{n!}{(n-k)!} \leq \binom{k}{2} | I_{n,k-1} |
    \leq \frac{k^2}{2} n^{k-1}  ;
\end{equation}
when $k=1$, clearly $I_{n,1} \setminus I^\circ_{n,1} =\emptyset$.
Now, from~\eqref{eq:Knt-expanded-moments}, 
\begin{equation}
\label{eq:I-split}
\Exp \left( K_{n,t}^k \right)
=  \sum_{(i_1, \ldots, i_k) \in I^\circ_{n,k} } \Exp \left( \theta_{i_1} \cdots \theta_{i_k}  \right)
+   \sum_{(i_1, \ldots, i_k) \in I_{n,k} \setminus I^\circ_{n,k}} \Exp \left(  \theta_{i_1} \cdots \theta_{i_k}  \right)
.
\end{equation}
For the first term on the right-hand side of~\eqref{eq:I-split}, by conditional independence,
\begin{equation}
    \label{eq:conditional-circ}
\Exp \left( \sum_{(i_1, \ldots, i_k) \in I^\circ_{n,k}} \theta_{i_1} \cdots \theta_{i_k} \mid M_0, M_1, \ldots \right)
=  (2t)^k \sum_{(i_1, \ldots, i_k) \in I^\circ_{n,k}}  {M_{i_1}^{-1}} \cdots {M_{i_k}^{-1}} .
\end{equation}
We bound the error between the sum on the right of~\eqref{eq:conditional-circ}  and the quantity
\[ \Exp \biggl( \Bigl( \sum_{i=0}^{n-1}  {M_i^{-1}} \Bigr)^k \biggr) = \Exp \sum_{(i_1, \ldots, i_k) \in I_{n,k}} {M_{i_1}^{-1}} \cdots  {M_{i_k}^{-1}} \]
from Corollary~\ref{cor:reciprocal-moments}. Indeed,  using~\eqref{eq:I-n-k-bound} and the fact that $M_i > 1/n$ for all $0 \leq i \leq n-1$,
\begin{align*}
{} & {} \biggl| \sum_{(i_1, \ldots, i_k) \in I_{n,k}}  {M_{i_1}^{-1}} \cdots  {M_{i_k}^{-1}} - \sum_{(i_1, \ldots, i_k) \in I^\circ_{n,k}}  {M_{i_1}^{-1}} \cdots  {M_{i_k}^{-1}} \biggr| \\
& {} \quad {} = 
\biggl| \sum_{(i_1, \ldots, i_k) \in I_{n,k}\setminus I^\circ_{n,k}}  {M_{i_1}^{-1}} \cdots  {M_{i_k}^{-1}} \biggr| \leq \frac{k^2}{2} n^{2k-1} , \text{ for all } k, n \in \N .\end{align*}
Thus, taking expectations in~\eqref{eq:conditional-circ}, we obtain
\[ 
\biggl|  \sum_{(i_1, \ldots, i_k) \in I^\circ_{n,k}} \Exp  \left( \theta_{i_1} \cdots \theta_{i_k} \right) - (2t)^k \Exp \biggl( \Bigl( \sum_{i=0}^{n-1}  {M_i^{-1}} \Bigr)^k \biggr)   \biggr| 
\leq 2^k t^k k^2 n^{2k-1} , \text{ for all } k, n \in \N .
\]
Then
from  Corollary~\ref{cor:reciprocal-moments}, there is a  $C < \infty$ (depending on $B_1$) such that, for all $n, k \in \N$,
\begin{align*}
\biggl|  \sum_{(i_1, \ldots, i_k) \in I^\circ_{n,k}} \Exp  \left( \theta_{i_1} \cdots \theta_{i_k} \right) - \frac{n^{2k}t^k}{2^k}   \biggr| 
& \leq C \left( k  +  4^k k^2 n^{-1/2} \right) \frac{n^{2k} t^k}{2^k} n^{-1/2} . \end{align*}
Fix $\delta >0$ with $4^\delta \leq \re^{(3/4)-(\nu/2)}$ (recall that $1 < \nu < 3/2$). Then $4^{\delta \log n} \leq n^{(3/4)-(\nu/2)}$, and
\begin{equation}
    \label{eq:k-n-growth}
 \lim_{n \to \infty} n^{\nu - (3/2)} \sup_{0 \leq k \leq \delta \log n} 4^k k^2  = 0. 
 \end{equation}
 So we conclude that, for a constant $C < \infty$, for all $n \in \N$, 
\begin{align}
\label{eq:k-to-n}
\biggl|  \sum_{(i_1, \ldots, i_k) \in I^\circ_{n,k}} \Exp  \left( \theta_{i_1} \cdots \theta_{i_k} \right) - \frac{n^{2k}t^k}{2^k}   \biggr| 
& \leq C k \frac{n^{2k} t^k}{2^k} n^{-1/2} , \text{ for all } k \leq \delta \log n.
\end{align}
Similarly to~\eqref{eq:I-n-k-bound}, using the fact that the $\theta_i$ are $\{0,1\}$-valued,
\begin{align*}
   \sum_{(i_1, \ldots, i_k) \in I_{n,k} \setminus I^\circ_{n,k}} \Exp \left( \theta_{i_1} \cdots \theta_{i_k}  \right) & \leq \frac{k^2}{2}
   \sum_{(i_1, \ldots, i_{k-1}) \in I_{n,k-1}}  \Exp \left( \theta_{i_1} \cdots \theta_{i_{k-1}}  \right)  \\
   & \leq \frac{k^2}{2} \Exp \left( K_{n,t}^{k-1} \right) \leq 2^{k-2} k^2 (n^2 t)^{k-1} \exp \left( \frac{k^2}{4 n^2 t} \right),
  \end{align*}
using the $k-1$ cases of~\eqref{eq:Knt-expanded-moments} and~\eqref{eq:Knt-moments-crude}. 
Since $t \geq n^{-\nu}$
for $\nu < 3/2$, and $k = O (\log n)$,
we have that $k^2 /(n^2 t)$ is uniformly bounded. Hence, 
combining the preceding display and~\eqref{eq:k-to-n} with~\eqref{eq:I-split},
we obtain, for some $C < \infty$ and all $n, k \in \N$ with $k \leq \delta \log n$,
\begin{align*}
    \left| \Exp \bigl( K_{n,t}^k \bigr)  - \frac{n^{2k}t^k}{2^k} \right| 
& \leq C \frac{n^{2k} t^k}{2^k} \left(   k n^{-1/2}  + 4^k k^2 (n^2 t)^{-1} \right), \text{ for all }
t \in \left[ n^{-\nu} , \tfrac{1}{2n}\right].
\end{align*}
Since  $n^2 t \geq n^{2-\nu}$, another application of~\eqref{eq:k-n-growth} yields~\eqref{eq:exp-Knt-asymptotics},
completing the proof of~\ref{lem:small-gaps-K-ii}.
 
Finally, we prove~\ref{lem:small-gaps-K-iii}.
Let $\cF_n := \sigma (U_1,\ldots, U_n)$ be the
$\sigma$-algebra generated by the first $n$~divisions;
 $\cF_0$ is the trivial $\sigma$-algebra. 
Fix $n \in \N$ with $2nt \leq 1$.
Note 
$\Exp ( K_{m+1,t}- K_{m,t} \mid \cF_m ) = \Exp ( \theta_{m} \mid \cF_m ) = 2t / M_m$
by~\eqref{eq:Knt-representation}. 
Let $A_{0,t} := 0$ and, for $m \in \N$,
$A_{m,t} := 2t\sum_{j=0}^{m-1} 1/M_j   ,$
and set $X_{m,t} := K_{m,t} - A_{m,t}$ for $m \in \ZP$. 
Then $X_{m,t}$ is $\cF_m$-measurable,  $A_{m+1,t} -A_{m,t} = 2t/M_m$, and, provided $m \leq n$,
\[ \Exp ( X_{m+1,t} - X_{m,t} \mid \cF_m ) = \Exp ( K_{m+1,t} -  K_{m,t} \mid \cF_m ) - (A_{m+1,t} -A_{m,t} ) = 0 . \]
Thus $X_{0,t}, X_{1,t}, \ldots, X_{n,t}$ is a martingale. Moreover, $0 \leq K_{m+1,t} - K_{m,t} \leq 1$ and 
$0 \leq A_{m+1} - A_{m,t} \leq  2 (m+1) t \leq 1$ for $m \leq n-1$, and so
$\sup_{0 \leq m \leq n-1} |X_{m+1,t}-X_{m,t}| \leq 1$, a.s. We    apply a one-sided  Azuma--Hoeffding inequality
(see e.g.~\cite[p.~46]{mpw}) to obtain, for all $ a \in \RP$, 
$\Pr ( |X_{n,t} - X_{0,t} | \geq a ) \leq  \exp ( - a^2 / (2n) )$. 
Since $A_{n,t} \leq  2t\sum_{i=0}^{n-1} (i+1)  \leq 4 t n^2$, a.s.,  
\[ \Pr ( K_{n,t} \geq 6 t n^2 ) \leq
\Pr ( K_{n,t} \geq 2 t n^2 + A_{n,t} ) \leq   \exp \left( - \frac{4 t^2 n^4}{2n} \right) ,
\]
which yields the tail bound in part~\ref{lem:small-gaps-K-iii}.
\end{proof}

\subsection{Limit theorem for the smallest gap}
\label{sec:smallest-gap}

In this section we use some standard Poisson approximation bounds, the representation 
given in Corollary~\ref{cor:Knt-representation} for counts~$K_{n,t}$ of small gaps, defined at~\eqref{eq:K-n-t-def}, and the reciprocal moments bounds in Corollary~\ref{cor:reciprocal-moments}, to give a proof of  Theorem~\ref{thm:small-gap} on the asymptotics of the smallest gap, $m_n = \min_{1 \leq i \leq n+1} L_{n,i}$ defined at~\eqref{eq:M-def}.

Recall from Corollary~\ref{cor:Knt-representation} that $K_{n,t}=\sum_{i=0}^{n-1}\theta_i$, where the $\theta_i$ are supported on $\{0,1\}$, are conditionally independent given $M_0,M_1,\ldots$, and satisfy $\Pr(\theta_i=1 \mid M_0,M_1,\ldots)= 2t/M_i$.
Throughout this section we let $C$ denote a positive, finite constant which is independent of $n$ and $t$ and whose value may vary from line to line.

\begin{proof}[Proof of Theorem~\ref{thm:small-gap}]
For non-negative, integer-valued random variables $K$ and $Y$, the corresponding total variation distance is denoted by
\[
\dtv(K,Y)=\sup_{A\subseteq\mathbb{Z}_+}|\mathbb{P}(K\in A)-\mathbb{P}(Y\in A)|.
\]
Recall a classic bound of Le Cam~\cite{lecam} (see also~\cite[p.~3]{bhj92}): letting $I_1,\ldots,I_n$ be independent Bernoulli random variables with $\Exp I_i =p_i$, the total variation distance (denoted below by $\dtv$) between $\sum_{i=1}^n I_i$ and a $\text{Pois} (\sum_{i=1}^n p_i )$ random variable is bounded by $4.5\max_{1\leq i\leq n}p_i$. Let $Y\sim\text{MP}( 2t \sum_{i=0}^{n-1} M_i^{-1} )$ have a mixed Poisson distribution; that is, conditional on $2t \sum_{i=0}^{n-1} M_i^{-1}$, the random variable $Y$ has a Poisson distribution with this parameter. A conditioning argument combined with Le Cam's result says that 
\[ \dtv
 (K_{n,t},Y) \leq 9t\Exp  \max_{0\leq i\leq n-1}M_i^{-1}  \leq 9 n t ,
\]
using the fact that $M_i > \frac{1}{1+i}$, a.s., for all $i \in \ZP$.
We may now approximate $Y$ by $Z\sim\text{Pois}(\frac{1}{2}n^2t)$. By Theorem 1.C(i) of \cite{bhj92} we have that
\[
\dtv  (Y,Z)\leq\min\left\{1,\sqrt{\frac{2}{n^2t}}\right\} \Exp \left|\sum_{i=0}^{n-1}\frac{2t}{M_i}-\frac{n^2t}{2}\right|\leq Cn^{3/2}t,
\]
for some $C$, where the final inequality follows from Corollary \ref{cor:reciprocal-moments}. Hence, by the triangle inequality there exists $C$ such that
\[
d_\text{TV}(K_{n,t},Z)\leq C\left(nt+n^{3/2}t\right)\leq Cn^{3/2}t.
\]
Then, we choose $t=\frac{2\theta}{n^2}$ for some $\theta>0$ and note that
\[
\Pr\left(m_n>\frac{2\theta}{n^2}\right)=\Pr(m_n>t)=\Pr(K_{n,t}=0)
\]
to obtain that there exists $C$ such that
\begin{equation}\label{eq:exponential}
\left|\Pr\left(m_n>\frac{2\theta}{n^2}\right)-\re^{-\theta}\right|\leq C\theta n^{-1/2},
\end{equation}
which immediately gives us that 
\begin{equation}\label{eq:exponential2}
\left|\Pr\left(\frac{n^2m_n}{2}>x\right)-\re^{-x}\right|\leq\frac{C(1+\log n)}{\sqrt{n}}
\end{equation} 
for any $x\leq1+\log n$. For $x>1+\log n$ we write
\begin{align*}
\left|\Pr\left(\frac{n^2m_n}{2}>x\right)-\re^{-x}\right|
&\leq\max\left\{\Pr\left(\frac{n^2m_n}{2}>x\right),\re^{-x}\right\}\\
&\leq\max\left\{\Pr\left(\frac{n^2m_n}{2}>1+\log n\right),\frac{1}{\re n}\right\}.
\end{align*}
By \eqref{eq:exponential}, the first term in this final maximum is at most $\frac{C(1+\log n)}{\sqrt{n}}+\frac{1}{\re n}$, and thus~\eqref{eq:exponential2} also holds for these values of $x$ and for a suitable choice of $C$.
\end{proof}

\section{Conditional Berry--Esseen bounds}
\label{sec:berry-esseen}

The starting point of our proof of Theorem~\ref{thm:n-b-e} is  a  decomposition of $N_t$
into a sum of independent, self-similar contributions,
obtained by considering the evolution of the process subsequent to time~$n$.
Fix $n \in \ZP$ and $t \in (0, \frac{1}{n+1})$.
Since $\Pr ( M_n \geq \frac{1}{n+1} > t) = 1$,  $\Pr (N_t > n) =1$. Extending~\eqref{eq:rde-U1} gives the representation,
for $(N^{(i)}_t)_{t>0}$ independent copies of $(N_t)_{t >0}$, independent of
gap lengths $L_{n,1}, \ldots, L_{n,n+1}$ (recall that $\sum_{i=1}^{n+1} L_{n,i} =1$),
\begin{equation}
\label{eq:big-sum}
 N_t - n = Y_{n,t} := \sum_{i=1}^{n+1} N^{(i)}_{t/L_{n,i}} ; \end{equation}
see e.g.~Proposition 1.1 of~\cite{lootgieter-77a} or~\cite{lootgieter-77b}. As a starting-point for proving (non-quantitative) CLTs, 
there is some similarity between~\eqref{eq:big-sum} and the approach of Dvoretzky \& Robbins~\cite{dr} in their
proof of the CLT for R\'enyi's parking model (see also Section~\ref{sec:branching}).

Recall that $\cF_n = \sigma ( U_1, \ldots, U_n)$ defines the filtration  to which the Kakutani process is adapted.  
For $n \in \ZP$ and $t \in (0, \frac{1}{n+1})$, define
\begin{align}
\label{eq:R-def}
 R_{n,t} & := \Exp ( N_t \mid \cF_n) - \Exp N_t = \Exp ( Y_{n,t} \mid \cF_n ) - \Exp Y_{n,t} ; \\
\label{eq:V-def}
 V_{n,t} & := \Var ( N_t \mid \cF_n ) =  \Var ( Y_{n,t} \mid \cF_n ) .\end{align}
 We will
 use the classical Berry--Esseen theorem 
 to obtain the following conditional Berry--Esseen estimate;
 note that in~\eqref{eq:b-e-conditional} not only
 is the probability conditional on~$\cF_n$, but
 so are the centering and scaling quantities $R_{n,t}$ and $V_{n,t}$.

\begin{lemma}
\label{lem:b-e-conditional}
There is a constant $C \in \RP$ such that, for all $n \in \N$ and all $t \in (0, \frac{1}{4(n+1)})$,  
\begin{equation}
    \label{eq:b-e-conditional}
 \sup_{x \in \R} \left| \Pr \left( \frac{N_t-\Exp N_t - R_{n,t}}{\sqrt{ V_{n,t} }} \leq x  \biggmid \cF_n \right) - \Phi (x) \right| \leq 
\frac{C M_n^2}{t^{3/2}}   , \as
\end{equation}
\end{lemma}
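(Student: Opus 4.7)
The key observation is that $N_t - \Exp N_t - R_{n,t}$ equals $Y_{n,t} - \Exp(Y_{n,t} \mid \cF_n)$, and, conditionally on $\cF_n$, the decomposition~\eqref{eq:big-sum} expresses $Y_{n,t}$ as a sum of $n+1$ \emph{independent} random variables, namely the $N^{(i)}_{t/L_{n,i}}$: the gap lengths $L_{n,1},\ldots,L_{n,n+1}$ are $\cF_n$-measurable, while the copies $N^{(i)}$ are independent of each other and of $\cF_n$. So the plan is to apply the classical Berry--Esseen theorem to this conditional independent sum, which reduces the problem to producing (i) a lower bound on the conditional variance $V_{n,t}$ and (ii) an upper bound on the sum of conditional third absolute moments.

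For step~(i), I would use the exact formula $v(s) = s_0/s$ valid for $s \in (0,1/2]$ from Proposition~\ref{prop:variance}. Any $L_{n,i} \geq 2t$ satisfies $t/L_{n,i} \leq 1/2$ and hence contributes $v(t/L_{n,i}) = s_0 L_{n,i}/t$ to $V_{n,t}$. The number of indices $i$ with $L_{n,i} < 2t$ is at most $n+1$, so such indices contribute total length at most $2(n+1)t \leq 1/2$ under the assumption $t \leq 1/(4(n+1))$; thus $\sum_{i: L_{n,i} \geq 2t} L_{n,i} \geq 1/2$ and
\[
V_{n,t} \;\geq\; \frac{s_0}{t}\sum_{i:\,L_{n,i}\geq 2t} L_{n,i} \;\geq\; \frac{s_0}{2t}, \text{ a.s.}
\]
For step~(ii), for $L_{n,i} \leq t$ we have $N^{(i)}_{t/L_{n,i}} = 0$ a.s.\ (contributing zero), while for $L_{n,i} > t$ we have $t/L_{n,i} \in (0,1]$ and Lemma~\ref{lem:N-moments} together with the elementary bound $\Exp|X-\Exp X|^3 \leq 16\,\Exp|X|^3$ gives $\Exp | N^{(i)}_{t/L_{n,i}} - \Exp N^{(i)}_{t/L_{n,i}} |^3 \leq C (L_{n,i}/t)^3$. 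Since $L_{n,i} \leq M_n$ and $\sum_i L_{n,i} = 1$,
\[
\sum_{i=1}^{n+1} \Exp \bigl|\, N^{(i)}_{t/L_{n,i}} - \Exp N^{(i)}_{t/L_{n,i}} \,\bigr|^{3} \;\leq\; \frac{C}{t^{3}}\sum_{i=1}^{n+1} L_{n,i}^{3} \;\leq\; \frac{C M_n^{2}}{t^{3}}.
\]
Plugging the two estimates into the classical Berry--Esseen bound (e.g.\ \cite[\S7.6]{gut}) applied conditionally on $\cF_n$ gives a rate of order $(M_n^2/t^3)/(1/t)^{3/2} = M_n^2/t^{3/2}$, matching~\eqref{eq:b-e-conditional}.

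The only real subtlety is the variance lower bound in step~(i): one needs the explicit piecewise formula for $v$ from Proposition~\ref{prop:variance}, and the smallness assumption $t \leq 1/(4(n+1))$ is exactly what is required to guarantee that a positive fraction of the total length~$1$ lies in the ``regular'' regime $L_{n,i} \geq 2t$ where $v(t/L_{n,i})$ has the clean form $s_0 L_{n,i}/t$. Once this is in place, the classical Berry--Esseen machinery together with the moment bound of Lemma~\ref{lem:N-moments} handles the rest with no further input.
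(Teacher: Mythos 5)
Your argument is correct and follows essentially the same approach as the paper's proof: apply the classical Berry--Esseen theorem conditionally on $\cF_n$ to the decomposition~\eqref{eq:big-sum}, lower-bound the conditional variance using $v(s)=s_0/s$ for $s\le 1/2$ together with the hypothesis $t<\tfrac{1}{4(n+1)}$, and upper-bound the sum of conditional third absolute moments via Lemma~\ref{lem:N-moments} and $\sum_i L_{n,i}^3\le M_n^2$. The only cosmetic difference is that you invoke the generic inequality $\Exp|X-\Exp X|^3\le 16\,\Exp|X|^3$, whereas the paper uses the pointwise bound $|a-b|^3\le a^3+b^3$ for $a,b\ge 0$ together with $|\mu(t/L_{n,i})|\le 2L_{n,i}/t$.
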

\begin{proof}
Fix $n \in\ZP$ and $t \in (0,\frac{1}{4(n+1)})$.
Conditional on $\cF_n$, the summands in the expression given in~\eqref{eq:big-sum} for $Y_{n,t}$ are independent
(although not identically distributed).
Denoting
\[ 
\gamma_{n,t} (i) := \Exp \Bigl( \bigl| N_{t/L_{n,i}} - \mu ( t/L_{n,i} ) \bigr|^3 \Bigmid \cF_n \Bigr),\]
the 
Berry--Esseen theorem 
for sums of independent random variables with finite third moments
(see Theorem~7.6.2 of~\cite[p.~356]{gut})
 yields, for an absolute constant $C \in \RP$,   
 \begin{align}
 \label{eq:conditional-BE-moments}
     \sup_{x \in \R} \left| \Pr \left( \frac{Y_{n,t} - \Exp ( Y_{n,t} \mid \cF_n)}{\sqrt{ \Var ( Y_{n,t} \mid \cF_n)}} \leq x \biggmid \cF_n \right) - \Phi (x) \right| \leq \frac{C \sum_{i=1}^{n+1} \gamma_{n,t} (i)}{\left( \sum_{i=1}^{n+1} v (t/L_{n,i}) \right)^{{3/2}}}, \as
 \end{align}
 Using the elementary inequality $| a- b|^3 \leq  |a|^3 + |b|^3$, $a, b \in \RP$, we have
 \[ \gamma_{n,t}(i) \leq \Exp ( |N_{t/L_{n,i}} |^3 \mid \cF_{n} )
 + 8 t^{-3} L_{n,i}^3 
 \leq C t^{-3} L_{n,i}^3, \as, 
 \]
 for constant $C< \infty$, from~\eqref{eq:mu-t} and~Lemma~\ref{lem:N-moments}.
 Since $\sum_{i=1}^{n+1} L_{n,i} = 1$, it follows that
 \[ \sum_{i=1}^{n+1} \gamma_{n,t} (i) \leq C t^{-3} \Bigl(
 \max_{1 \leq j \leq n+1} L_{n,j}^2  \Bigr) 
 \sum_{i=1}^{n+1} L_{n,i} = C t^{-3} M_n^2. \]
 On the other hand, by~\eqref{eq:var-t},
 provided that $t \in (0, \frac{1}{4(n+1)} )$,
 \begin{equation}
     \label{eq:v-t-L-bound}
 \sum_{i=1}^{n+1} v (t/L_{n,i}) \geq s_0 t^{-1} \sum_{i=1}^{n+1} L_{n,i} \1{ L_{n,i} \geq 2 t }
 \geq \frac{s_0}{t} - 2 s_0 \sum_{i=1}^{n+1} \1{ L_{n,i} < 2 t }
 \geq \frac{s_0}{2t}.
 \end{equation}
Using~\eqref{eq:v-t-L-bound} and the preceding bound
 for $\gamma_{n,t} (i)$  in~\eqref{eq:conditional-BE-moments} yields~\eqref{eq:b-e-conditional}. 
\end{proof}

To deduce
Theorem~\ref{thm:n-b-e}
starting from Lemma~\ref{lem:b-e-conditional},
we need to examine the quantities $R_{n,t}$ and $V_{n,t}$
that appear as centering and scaling in~\eqref{eq:b-e-conditional}. 
To do so, we define
 \begin{equation}
\label{eq:S-def}
 S_{n,t} := v(t) - \Var (N_t \mid \cF_n ) = \Var (Y_{n,t} ) -  V_{n,t} ,\end{equation}
where $V_{n,t}$ is defined at~\eqref{eq:V-def}
and $v (t) = \Var N_t$ is given by~\eqref{eq:var-t}.
A significant part of the remaining technical work of the paper is to obtain good asymptotic estimates for mixed moments of $R_{n,t}$ and $S_{n,t}$ (see Section~\ref{sec:RS-moments}). To facilitate this we derive, in the rest of the present section, basic properties of $R_{n,t}$ and $S_{n,t}$, and crucial representations for $R_{n,t}$ and $S_{n,t}$ in terms of small-gap statistics as described in Section~\ref{sec:small-gaps}. 
 
\begin{lemma}
\label{lem:R-S-algebra}
Suppose that $n \in \ZP$ and $t \in (0, \frac{1}{n+1})$,
and define $R_{n,t}$ and $S_{n,t}$ by~\eqref{eq:R-def} and~\eqref{eq:S-def}.
Then $\Exp R_{n,t} =0$, and the following hold:
\begin{align}
\label{eq:Rvar-R-n-k} \Var ( R_{n,t} \mid \cF_k ) & = \Exp ( S_{n,t} \mid \cF_k ) - S_{k,t}, \text{ for all } k \in \{0,1,2,\ldots, n\}; \\
\label{eq:R-S}
 \Exp S_{n,t} & = \Var R_{n,t} . \end{align}
\end{lemma}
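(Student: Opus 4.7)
The plan is to derive all three statements via elementary conditional-expectation identities, with the key tool being the law of total variance applied to the nested $\sigma$-algebras $\cF_k \subseteq \cF_n$ (all moments in sight are finite by Lemma~\ref{lem:N-moments}).

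First, $\Exp R_{n,t} = 0$ is immediate from the tower property: $\Exp \bigl[ \Exp ( N_t \mid \cF_n ) \bigr] = \Exp N_t$, so the definition~\eqref{eq:R-def} gives the result.

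Next, for~\eqref{eq:Rvar-R-n-k}, I would apply the conditional law of total variance to $X := N_t$ with the filtration pair $(\cF_k, \cF_n)$ where $k \leq n$, namely
\[ \Var (X \mid \cF_k) = \Exp \bigl( \Var (X \mid \cF_n) \bigm| \cF_k \bigr) + \Var \bigl( \Exp (X \mid \cF_n) \bigm| \cF_k \bigr). \]
The left-hand side equals $V_{k,t} = v(t) - S_{k,t}$ by definition~\eqref{eq:S-def}. Inside the first term on the right, $\Var (X \mid \cF_n) = V_{n,t} = v(t) - S_{n,t}$, so taking the conditional expectation gives $v(t) - \Exp ( S_{n,t} \mid \cF_k )$. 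For the second term, observe that $\Exp ( N_t \mid \cF_n) = R_{n,t} + \Exp N_t$ differs from $R_{n,t}$ by a deterministic constant, so its conditional variance given $\cF_k$ equals $\Var ( R_{n,t} \mid \cF_k )$. Substituting and rearranging yields~\eqref{eq:Rvar-R-n-k}.

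Finally,~\eqref{eq:R-S} follows from~\eqref{eq:Rvar-R-n-k} evaluated at $k = 0$: since $\cF_0$ is the trivial $\sigma$-algebra, $\Var ( R_{n,t} \mid \cF_0 ) = \Var R_{n,t}$ and $\Exp ( S_{n,t} \mid \cF_0 ) = \Exp S_{n,t}$, while $S_{0,t} = v(t) - \Var N_t = 0$ directly from~\eqref{eq:S-def}. No serious obstacle is anticipated; the lemma is essentially bookkeeping around the law of total variance, and the main benefit of stating~\eqref{eq:Rvar-R-n-k} in this form is that it couples $R$- and $S$-moments across time levels, which will be useful in the later mixed-moment analysis of Section~\ref{sec:RS-moments}.
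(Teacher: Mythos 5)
Your proof is correct and follows essentially the same path as the paper: both hinge on the conditional law of total variance applied across the filtration pair $(\cF_k, \cF_n)$, with the only cosmetic difference that you condition directly on $N_t$ while the paper works with $Y_{n,t} = N_t - n$ (which agrees up to a deterministic shift, so all variances coincide). The $k=0$ specialisation, including the observation $S_{0,t}=0$, matches the paper's deduction of~\eqref{eq:R-S}.
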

\begin{proof}
 Clearly, $\Exp R_{n,t} = 0$ by~\eqref{eq:R-def}.
Since $Y_{n,t} = N_t - n$ by~\eqref{eq:big-sum}, for $k \leq n$, $\Var ( Y_{n,t} \mid \cF_k ) = \Var (N_t \mid \cF_k)$, 
 and hence, by~\eqref{eq:S-def} and the fact that $\Var Y_{n,t} = v(t)$,
\begin{equation}
\label{eq:R-S-1}
 \Var (N_t \mid \cF_k) = \Var ( Y_{n,t} \mid \cF_k ) = v(t) - S_{k,t}, \text{ for all } k \in \{0,1,\ldots, n \} .\end{equation}
 By the (conditional) total variance formula, using~\eqref{eq:R-def} and~\eqref{eq:S-def}, 
\begin{align*}
\Var (Y_{n,t} \mid \cF_k ) & = \Exp  \bigl( \Var ( Y_{n,t} \mid \cF_n) \bigmid \cF_k \bigr)
+ \Var \bigl(  \Exp ( Y_{n,t} \mid \cF_n ) \bigmid \cF_k \bigr) \\
& = v(t) - \Exp ( S_{n,t} \mid \cF_k ) + \Var ( R_{n,t} \mid \cF_k ). \end{align*}
 Comparison with~\eqref{eq:R-S-1}
yields~\eqref{eq:Rvar-R-n-k}. Finally, the $k=0$ case of~\eqref{eq:Rvar-R-n-k}  yields~\eqref{eq:R-S}.
\end{proof}

Recall from~\eqref{eq:var-t} 
that $v(t) = \Var N_t = s_0/t$, $t \in (0,1/2)$, where  $s_0 = 8 \log 2 -5$. Set
\begin{equation}
\label{eq:w-def}
 w (t) := \left( 2 + \frac{7-8 \log 2}{t} - \frac{ 8 \log t}{t} - \frac{4}{t^2} \right) \1 { 1/2 < t < 1 } .\end{equation}
The next result includes a representation for $S_{n,t}$ via two sum statistics of the form~\eqref{eq:U-n-t-def}.

\begin{lemma}
\label{lem:S-bound}
Suppose that $n \in \N$ and $t \in (0, \frac{1}{n+1} )$. Then,
with  $w$ defined at~\eqref{eq:w-def},
\begin{equation}
    \label{eq:Snt-representation}
    S_{n,t} = 
     \frac{s_0}{t} \sum_{i=1}^{n+1} L_{n,i} \1 {L_{n,i} \leq t} 
- \sum_{i=1}^{n+1} w ( t/L_{n,i} ).
\end{equation}
Moreover, with $K_{n,t}$ defined at~\eqref{eq:K-n-t-def}, 
whenever $t \in (0, \frac{1}{4(n+1)} )$
 it holds that
\begin{equation}
    \label{eq:S-K-bound}
    S_{n,t} \leq \frac{s_0}{2t}, \text{ and } 
| S_{n,t} | \leq s_0 K_{n,2t}, \as
\end{equation}
\end{lemma}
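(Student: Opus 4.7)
The starting point will be the definition $S_{n,t} = v(t) - V_{n,t}$ combined with the decomposition~\eqref{eq:big-sum}: since the $N^{(i)}_{t/L_{n,i}}$ are conditionally independent given $\cF_n$, one has $V_{n,t} = \sum_{i=1}^{n+1} v(t/L_{n,i})$, and the hypothesis $t < 1/(n+1) \leq 1/2$ together with Proposition~\ref{prop:variance} yields $v(t) = s_0/t$. This will give $S_{n,t} = s_0/t - \sum_{i=1}^{n+1} v(t/L_{n,i})$ as the common starting formula for both the representation and the bounds.

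The next step is to partition the index set according to the three regimes of the piecewise formula~\eqref{eq:var-t} for $v$: $L_{n,i} \geq 2t$ (where $v(t/L_{n,i}) = s_0 L_{n,i}/t$), $L_{n,i} \in (t, 2t)$ (the second branch of~\eqref{eq:var-t}), and $L_{n,i} \leq t$ (where $v(t/L_{n,i}) = 0$). Using $\sum_i L_{n,i} = 1$, the $L_{n,i} \geq 2t$ contribution collapses against $s_0/t$ to leave the small-gap remainder $\frac{s_0}{t} \sum_i L_{n,i} \1{L_{n,i} < 2t}$. Splitting this further into the pieces $L_{n,i} \leq t$ and $L_{n,i} \in (t,2t)$, and combining the latter with the second-branch contributions at those same indices, substituting $s_0 = 8\log 2 - 5$ and collecting the coefficients of $1$, $L_{n,i}/t$, and $L_{n,i}^2/t^2$ matches exactly $-w(t/L_{n,i})$ as given by~\eqref{eq:w-def}. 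That produces~\eqref{eq:Snt-representation}.

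For the inequalities~\eqref{eq:S-K-bound}, I will bound the two terms of~\eqref{eq:Snt-representation} separately. In the first sum, each non-zero summand has $L_{n,i}/t \leq 1$, so the total is at most $s_0 K_{n,t}$. The second sum is supported on the (a.s.\ $K_{n,2t} - K_{n,t}$) indices with $L_{n,i} \in (t,2t)$, and on these indices $t/L_{n,i} \in (1/2, 1)$; provided $|w(s)| \leq s_0$ holds on $(1/2,1)$, this contributes at most $s_0(K_{n,2t} - K_{n,t})$. Adding the two bounds yields $|S_{n,t}| \leq s_0 K_{n,2t}$, and the deterministic bound $S_{n,t} \leq s_0/(2t)$ is then immediate from $K_{n,2t} \leq n+1$ and the hypothesis $4(n+1) t \leq 1$, which force $s_0 K_{n,2t} \leq s_0/(4t) < s_0/(2t)$.

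The main technical point will be verifying $|w(s)| \leq s_0$ for all $s \in (1/2, 1)$. The minimum is attained at $s=1$, where $w(1) = 5 - 8\log 2 = -s_0$, so the inequality is sharp; the maximum, governed by the critical-point equation $8 s \log(2s) - 15 s + 8 = 0$ obtained from $w'(s) = 0$, numerically gives a small positive value well below $s_0 \approx 0.545$, and can be controlled rigorously by a short calculus argument on the bounded interval $(1/2,1)$. Once this estimate is secured, the remainder of the proof is essentially algebraic bookkeeping.
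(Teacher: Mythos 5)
Your proof is correct and follows the paper's argument closely: both derive the identity~\eqref{eq:Snt-representation} from the decomposition $S_{n,t} = \frac{s_0}{t} - \sum_i v(t/L_{n,i})$ together with the relation $v(s) = \frac{s_0}{s}\1{0<s<1} + w(s)$, and both bound $|S_{n,t}|$ by counting indices with $L_{n,i}\le t$ and $L_{n,i}\in(t,2t]$ using $\sup_{1/2<s<1}|w(s)| = s_0$. The one genuine difference is in the first inequality of~\eqref{eq:S-K-bound}: you deduce $S_{n,t}\le s_0/(2t)$ directly from $|S_{n,t}|\le s_0 K_{n,2t}$ via the deterministic bound $K_{n,2t}\le n+1 < 1/(4t)$, whereas the paper obtains it separately by invoking the lower bound~\eqref{eq:v-t-L-bound} on $\sum_i v(t/L_{n,i})$; your route is slightly more economical and in fact yields the marginally stronger conclusion $|S_{n,t}|\le s_0/(4t)$.
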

\begin{proof}
From  Proposition~\ref{prop:variance} and the definition
of $w$ from~\eqref{eq:w-def}, we see that
\begin{equation}
\label{eq:w-appears}
 v (t) - \frac{s_0}{t} \1 { 0 < t < 1} = w (t)  .\end{equation}
The function $w$ as defined in~\eqref{eq:w-def} satisfies $\sup_{0 \leq t \leq 1} |w(t)|  = \lim_{t\to1-} | w(t)| = s_0$, and so
\begin{equation}
\label{eq:v-w-bound}
 | w(t) | =   \left| v(t) - \frac{s_0}{t} \1 { 0 < t < 1} \right| \leq s_0  \1 { 1/2 < t < 1} . \end{equation}
Now let $t \in (0, \frac{1}{n+1})$. 
We have from~\eqref{eq:big-sum} and conditional independence that
\[ \Var ( Y_{n,t} \mid \cF_n ) = \sum_{i=1}^{n+1} \Var ( N_{t/L_{n,i}}^{(i)} \mid \cF_n )
= \sum_{i=1}^{n+1} v ( t/L_{n,i} ) . \]
Hence, from~\eqref{eq:S-def} and~\eqref{eq:w-appears}, since $t < \frac{1}{n+1} \leq 1/2$,
\begin{align}
S_{n,t}  = \frac{s_0}{t} - \sum_{i=1}^{n+1}  v ( t/L_{n,i} ) \label{eq:S-first-formula} = \frac{s_0}{t} - \frac{s_0}{t} \sum_{i=1}^{n+1} L_{n,i} \1 {L_{n,i} > t}  
- \sum_{i=1}^{n+1} w ( t/L_{n,i} ) ,   \end{align}
which yields~\eqref{eq:Snt-representation}. 
If also $t < \frac{1}{4(n+1)}$,  we may apply~\eqref{eq:v-t-L-bound} in~\eqref{eq:S-first-formula} to obtain $S_{n,t} \leq s_0/(2t)$, giving the first bound in~\eqref{eq:S-K-bound}. By~\eqref{eq:Snt-representation}, \eqref{eq:v-w-bound}, and~\eqref{eq:K-n-t-def} we get 
$| S_{n,t} | \leq s_0 K_{n} (0,t] + s_0 K_{n} (t,2t] = s_0 K_{n,2t}$, which gives the second bound in~\eqref{eq:S-K-bound}. 
\end{proof}

\begin{remark}
\label{rem:S-bound}
Consider $t = \theta/n$, so $n^2 t = n \theta$.
From~\eqref{eq:U-lln}, it follows that, for $\theta\in (0,1)$, 
\begin{align*} \lim_{n \to \infty} \frac{1}{n \theta} \sum_{i=1}^{n+1} w  \left( \frac{\theta}{(n+1) L_{n,i}} \right )
& =
\frac{1}{2\theta} \int_\theta^{2\theta} w (1/u) \ud u 
= \frac{1}{2}  \int_{1/2}^{1} t^{-2} w ( t ) \ud t,
\end{align*}
which, using the formula from~\eqref{eq:w-def} to evaluate the integral, takes the (negative) value
\[  \frac{1}{2} \int_{1/2}^{1} t^{-2} w ( t ) \ud t =  2 \log 2 - \frac{17}{12} \approx -0.0303723. \]
Thus~\eqref{eq:Snt-representation}
says that we should expect $S_{n,t}$ to be genuinely of order $n^2 t$.
\end{remark}
 
Next, we show that $R_{n,t}$ can be represented as a sum statistic of the form~\eqref{eq:U-n-t-def}.

\begin{lemma}
\label{lem:R-alternative}
Let $n \in \N$ and take $t \in (0, 1)$.
 Then $R_{n,t}$ defined by~\eqref{eq:R-def} satisfies
\begin{equation}
\label{eq:Rnt-alternative}
  R_{n,t} = \cK_n^g (0,t], \text{ and } | R_{n,t} | \leq K_{n,t}, \as,
 \end{equation}
for $\cK^g$, $g :[0,1] \to [-1,1]$ given by~\eqref{eq:U-n-t-def} with $g(u) = 1-2u$, and $K_{n,t}$  defined at~\eqref{eq:K-n-t-def}.
\end{lemma}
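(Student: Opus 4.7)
The plan is a direct computation exploiting the closed form $\mu(s)=(2/s-1)\1{0<s<1}$ from Proposition~\ref{prop:variance} together with the self-similar decomposition~\eqref{eq:big-sum}. By the conditional independence of the sub-processes $(N_t^{(i)})$ given $\cF_n$, we have
\[
\Exp(N_t \mid \cF_n) \;=\; n + \sum_{i=1}^{n+1} \mu(t/L_{n,i}) \;=\; n + \sum_{i=1}^{n+1}\left( \frac{2 L_{n,i}}{t} - 1\right) \1{L_{n,i} > t}.
\]
Sub-intervals with $L_{n,i}\leq t$ contribute $0$, so this identity continues to make sense for all $t\in(0,1)$ (not only $t<1/(n+1)$), since such intervals require no further splitting and each attach their own independent copy of the (trivial) process.

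The next step is to rewrite the right-hand side in terms of the \emph{small} gaps (those with $L_{n,i}\leq t$), using the two identities $\sum_{i=1}^{n+1} L_{n,i}=1$ and $\1{L_{n,i}>t} = 1 - \1{L_{n,i}\leq t}$. This gives
\[
\Exp(N_t \mid \cF_n) \;=\; n + \frac{2}{t} - (n+1) + \sum_{i=1}^{n+1}\left(1-\frac{2 L_{n,i}}{t}\right) \1{L_{n,i} \leq t} \;=\; \mu(t) + \sum_{i=1}^{n+1} g\!\left(\frac{L_{n,i}}{t}\right)\1{L_{n,i} \leq t},
\]
where $g(u)=1-2u$. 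Since $\Exp N_t = \mu(t)$, subtracting yields $R_{n,t}=\cK_n^g(0,t]$ in the notation of~\eqref{eq:U-n-t-def}, which is the first claim in~\eqref{eq:Rnt-alternative}.

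The bound $|R_{n,t}|\leq K_{n,t}$ is then immediate: for $u\in[0,1]$ we have $g(u)=1-2u\in[-1,1]$, so every nonzero summand in $\cK_n^g(0,t]$ has magnitude at most $1$, and there are exactly $K_{n,t}$ of them. There is no real obstacle here; the only subtlety worth flagging is the mild extension of~\eqref{eq:big-sum} to all $t\in(0,1)$ noted above. The actual content of the lemma is structural rather than analytic: it identifies $R_{n,t}$ as a sum statistic supported only on the small gaps, so that the conditional independence machinery of Lemma~\ref{lem:small-gaps-U} can be brought to bear on it in the moment estimates to follow.
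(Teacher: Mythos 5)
Your computation is precisely the paper's: take conditional expectations in~\eqref{eq:big-sum}, insert the closed form~\eqref{eq:mu-t}, and use $\sum_i L_{n,i}=1$ to convert the sum over large gaps to one over small gaps; the bound $|R_{n,t}|\leq K_{n,t}$ then follows since $|g|\leq 1$ on $[0,1]$.

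However, the parenthetical justification for extending $\Exp(N_t\mid\cF_n)=n+\sum_i\mu(t/L_{n,i})$ to all $t\in(0,1)$ is wrong, and the reasoning you give does not address the actual failure mode. The representation $N_t-n=\sum_i N^{(i)}_{t/L_{n,i}}$ requires $N_t\geq n$ almost surely. The problem is not that some sub-intervals at time $n$ already have length $\leq t$ (for those the corresponding summand does vanish, exactly as you say); it is that the partition may already have had $M_m\leq t$ for some $m<n$, in which case $N_t=m<n$ while every $N^{(i)}_{t/L_{n,i}}$ is zero, so the right-hand side returns $n\neq N_t$. This happens with positive probability once $t\geq 1/n$ with $n\geq 2$. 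Concretely, take $n=2$ and $t=0.9$: on $\{0.1<U_1<0.9\}$ we have $M_1<0.9$, hence $N_{0.9}=1$ is $\cF_1$-measurable, and
\[
R_{2,0.9}=1-\mu(0.9)=-\tfrac{2}{9},\qquad
\cK_2^g(0,0.9]=3-\tfrac{2}{0.9}=\tfrac{7}{9},
\]
which disagree. The identity is therefore safe only for $t<1/n$ (when $n\geq 2$). The lemma's stated range $t\in(0,1)$ in the paper shares this slip, but it is harmless in context since the result is only ever invoked with $t<\frac{1}{4(n+1)}$; you should not, however, present the extension to all of $(0,1)$ as having been justified.
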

\begin{proof}
Taking conditional expectations in~\eqref{eq:big-sum} and using~\eqref{eq:mu-t}, we obtain
\begin{align*}
 R_{n,t} = \Exp ( Y_{n,t} \mid \cF_n ) - \Exp Y_{n,t} & = \sum_{i=1}^{n+1} \mu ( t / L_{n,i} ) + n - \mu (t) \\
& = \sum_{i=1}^{n+1} \left( \frac{2L_{n,i}}{t} - 1 \right) \1 { L_{n,i} > t } + n +1 - \frac{2}{t} \\
& =  \sum_{i=1}^{n+1}  \left( 1 - \frac{2L_{n,i}}{t}   \right)\1 { L_{n,i} \leq t } ,\end{align*}
using $\sum_{i=1}^{n+1} L_{n,i} = 1$ and $t \in (0,1)$. 
Thus for $g(u) = 1-2u$ we identify from~\eqref{eq:U-n-t-def} that $R_{n,t} =  \cK_n^g (0,t]$,
and since $| g (u) | \leq 1 $, we verify~\eqref{eq:Rnt-alternative} using~\eqref{eq:K-n-t-def}.
\end{proof}

\begin{remark}
\label{rems:R-CLT}
The bound $|R_{n,t}| \leq K_{n,t}$ from~\eqref{eq:Rnt-alternative}
shows that $| R_{n,t} | = O ( n^2 t)$
with high probability. This bound is poor, since~\eqref{eq:R-S}
and Lemma~\ref{lem:S-bound}
say $\Var R_{n,t} = \Exp S_{n,t} = O ( n^2 t)$,
and $\Exp R_{n,t} =0$, so one expects $|R_{n,t}|$ to be around $O ( n t^{1/2} )$. Indeed, if   $\theta \in (0,1)$,
 the fluctuation results of Pyke \& van~Zwet  
 (Theorem~6.2 of~\cite{pvz}) show that $n^{-1/2} R_{n,\theta/n}$
 has a Gaussian limit. 
 However, when $n t \to 0$ this result says only that $n^{-1/2} R_{n,t} \to 0$ in probability. 
Proposition~\ref{prop:R-S-cross-moments} below includes moments asymptotics $\Exp ( R_{n,t}^p )$ that address these points, giving finer control on the asymptotics of $R_{n,t}$ for a broader range of~$t$.
\end{remark}

\section{Conditional means, variances, and their moments}
\label{sec:RS-moments}

The aim of this section is to establish the following asymptotics
on the mixed moments of $R_{n,t}$ and $S_{n,t}$ defined at~\eqref{eq:R-def} and~\eqref{eq:S-def} respectively.
The result is in two parts, depending on the parity of the exponent of~$R_{n,t}$; recall that $\Exp R_{n,t} = 0$.

\begin{proposition}
    \label{prop:R-S-cross-moments}
Suppose that $\nu \in (1, \frac{3}{2})$. Then the following hold:
\begin{thmenumi}[label=(\roman*)]
\item 
\label{prop:R-S-cross-moments-i}
There exist constants $C < \infty$ and $\delta >0$ such that, for all $n \in \N$,
all $t \in (n^{-\nu}, \frac{1}{4(n+1)} )$, and all $p \in 2\ZP$ and $q \in \ZP$
with $1 \leq p+q \leq \delta \log n$, 
\begin{equation}
\label{eq:R-S-p-even-bound}
     \left|    \Exp \left( R_{n,t}^{p} S_{n,t}^q \right) - 
    \frac{p!}{2^{\frac{p}{2}} (\frac{p}{2})!} \left( \frac{n^2 t}{6} \right)^{q+\frac{p}{2}}  \right|   \leq  C   \left( \tfrac{p}{2}+1 \right)! \cdot ( 40 n^2 t)^{q +\frac{p}{2}} \cdot (n^2 t)^{-1/2}.
\end{equation}
    \item 
\label{prop:R-S-cross-moments-ii}
There exist constants $C < \infty$ and $\delta >0$ such that, for all $n \in \N$,
all $t \in ( n^{-\nu}, \frac{1}{4(n+1)} )$, and all $p-1 \in 2\ZP$ and $q \in \ZP$
with $1 \leq p+q \leq \delta \log n$, 
\begin{equation}
\label{eq:R-S-p-odd-bound}
        \left|   \Exp \left( R_{n,t}^{p} S_{n,t}^q \right) -  
        \frac{q \cdot (p+1)!}{2^{\frac{p+3}{2}} \cdot (\frac{p+1}{2})!}  \left( \frac{n^2 t}{6} \right)^{q+\frac{p-1}{2}} \right| \leq
         C   \left( \tfrac{p+1}{2} \right)! \cdot ( 40 n^2 t)^{q +\frac{p-1}{2}} \cdot (n^2 t)^{-1/2}.
    \end{equation}
\end{thmenumi}
\end{proposition}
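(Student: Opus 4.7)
The plan is to build on the conditional-independence structure of small-gap statistics from Lemma~\ref{lem:small-gaps-U} together with the reciprocal-moment asymptotics of Corollary~\ref{cor:reciprocal-moments}, following the same scheme as the proof of Lemma~\ref{lem:small-gaps-K}\ref{lem:small-gaps-K-ii}. First I would combine Lemmas~\ref{lem:S-bound} and~\ref{lem:R-alternative} with two applications of Lemma~\ref{lem:small-gaps-U} (one for the window $(0,t]$ and one for the window $(t,2t]$) to write
\[
R_{n,t} = \sum_{i=0}^{n-1} a_i , \qquad S_{n,t} = \sum_{i=0}^{n-1} b_i ,
\]
where $a_i = \tau_i^{(1)} (1 - 2 \gamma_i^{(1)})$ and $b_i = s_0 \tau_i^{(1)} \gamma_i^{(1)} - \tau_i^{(2)} h(\gamma_i^{(2)})$ with $h(u) := w(1/(2u))$. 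The disjoint indicators $\tau_i^{(1)}, \tau_i^{(2)} \in \{0,1\}$ satisfy $\Pr ( \tau_i^{(j)} = 1 \mid M_i ) = 2t/M_i$, the pairs $(\tau_i^{(\cdot)}, \gamma_i^{(\cdot)})$ are conditionally independent across $i$ given $M_0, M_1, \ldots$, and $\gamma_i^{(1)} \mid \tau_i^{(1)} = 1 \sim \unif{0}{1}$ while $\gamma_i^{(2)} \mid \tau_i^{(2)} = 1 \sim \unif{1/2}{1}$.

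Next I would collect the basic single-step conditional moments: since $\tau_i^{(1)} \tau_i^{(2)} = 0$ and $\Exp [ 1-2U ] = 0$ for $U \sim \unif{0}{1}$, we get $\Exp (a_i \mid M_i) = 0$ and
$\Exp (a_i^2 \mid M_i) = \Exp (b_i \mid M_i) = 2t/(3 M_i)$, while every higher mixed moment takes the form $\Exp ( a_i^{s} b_i^{r} \mid M_i ) = c_{s,r}\, t/M_i$ for a bounded constant $c_{s,r}$ whenever $s+r \geq 1$ (bounded uniformly in $s,r \leq \delta \log n$). Then I would expand
\[
\Exp \bigl( R_{n,t}^p S_{n,t}^q \bigr)
= \sum_{(\mathbf{i},\mathbf{j}) \in \{0,\ldots, n-1\}^{p+q}} \Exp \Bigl( \prod_{r=1}^p a_{i_r} \prod_{s=1}^q b_{j_s} \Bigr),
\]
and organize the sum by the coincidence pattern on $(\mathbf{i}, \mathbf{j})$, which partitions the $p+q$ positions into \emph{groups} of matching index. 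By conditional independence the contribution of each pattern factors over its groups; since $\Exp (a_i \mid M_i) = 0$, any group containing exactly one $a$-position and no $b$-position vanishes; and every surviving group contributes a factor of order $t/M$, so a pattern with $G$ groups contributes $O((n^2 t)^G)$ by Corollary~\ref{cor:reciprocal-moments}. Maximizing $G$ under the ``no lone $a$'' constraint gives $G_\mathrm{max} = p/2 + q$ for even $p$ (pair all the $a$'s, keep the $b$'s as singletons) and $G_\mathrm{max} = (p-1)/2 + q$ for odd~$p$ (with a single mixed $(1,1)$ group absorbing the unpaired~$a$). Counting these pairings via standard Isserlis-type identities identifies the leading coefficients stated in~\eqref{eq:R-S-p-even-bound}--\eqref{eq:R-S-p-odd-bound}, while the $(n^2 t)^{-1/2}$ error inherited from Corollary~\ref{cor:reciprocal-moments} propagates to the right-hand sides.

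Finally I would bound the subdominant patterns with $G < G_\mathrm{max}$: reducing $G$ by one forces at least one group to have multiplicity at least~$2$, which removes one $t$-factor from the product and hence reduces the order by $n^2 t$, plenty of room to absorb such terms into the $(n^2 t)^{-1/2}$ error. The main obstacle will be controlling this \emph{uniformly} over $p+q \leq \delta \log n$, since the combinatorial count of patterns with a given number of coincidences grows factorially in $p+q$; this will require a cardinality estimate in the spirit of~\eqref{eq:I-n-k-bound}, combined with the crude envelopes $|R_{n,t}| \leq K_{n,t}$ (Lemma~\ref{lem:R-alternative}) and $|S_{n,t}| \leq s_0 K_{n,2t}$ (Lemma~\ref{lem:S-bound}), together with the moment bound Lemma~\ref{lem:small-gaps-K}\ref{lem:small-gaps-K-i} for $K_{n,t}^k$, to absorb patterns where many positions collide. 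The $40^{p+q}$-type envelope and the factorials $((p/2)+1)!$, $((p+1)/2)!$ in the stated bounds reflect these combinatorial and envelope costs.
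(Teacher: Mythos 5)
Your plan takes a genuinely different route from the paper's. The paper first establishes a triple representation for $(K_{n,t}, R_{n,t}, W_{n,t})$ conditional on $K_{n,2t}$ (Lemma~\ref{lem:K-R-W}, which exposes a binomial-thinning structure), bounds the mixed moments $\Exp(K^a R^{2b} W^c)$ via Lemma~\ref{lem:K-R-W-moments} and the random-sum moment estimates of Appendix~\ref{sec:appendix}, and then recovers $\Exp(R^p S^q)$ by expanding $S^q$ as a trinomial in $(K_{n,t}, R_{n,t}, W_{n,t})$. You instead go back to Lemma~\ref{lem:small-gaps-U}, write both $R_{n,t}$ and $S_{n,t}$ as conditionally independent sums over the $n$ splitting steps, and analyse coincidence patterns on the $p+q$ index positions Isserlis-style. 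Your method exposes the dominant pairing structure (pair all the $a$'s, or absorb the one unpaired $a$ into a $b$) very directly; the paper's buys a far simpler combinatorial picture, since the trinomial expansion has only $O(q^2)$ terms rather than all set partitions of $p+q$ positions.

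The gap is precisely the combinatorial uniformity you flag at the end, and it is not minor. For $p+q \sim \delta\log n$ the number of coincidence patterns is a Bell number of order $(p+q)^{p+q}$, which is super-polynomial in $n$. The crude envelope $|R_{n,t}| \leq K_{n,t}$, $|S_{n,t}| \leq s_0 K_{n,2t}$ together with Lemma~\ref{lem:small-gaps-K}\ref{lem:small-gaps-K-i} gives a global bound of order $(4n^2t)^{p+q}$, but that does not furnish per-pattern control; you would need to show that the patterns with $G_{\max}-j$ groups are few enough, uniformly in $j$ and in $p+q\leq\delta\log n$, to be beaten by the extra $(n^2t)^{-j}$ decay, and simultaneously land on the stated $40^{q+p/2}$ envelope and the factorials $(p/2+1)!$ and $((p+1)/2)!$. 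The paper's reduction is engineered specifically to sidestep this. There is also a smaller point: Lemma~\ref{lem:small-gaps-U} asserts the conditional independence for one window at a time; your decomposition uses it for the windows $(0,t]$ and $(t,2t]$ jointly (they share the driving variable $U_{i+1}$), which requires an explicit, if easy, extension. The paper avoids this by applying Lemma~\ref{lem:small-gaps-U} once to $(0,2t]$ with $g(u)=\1{u\leq 1/2}$ to get the binomial-thinning structure of Lemma~\ref{lem:K-R-W}.

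One further point, which is actually a mark in favour of your route: if you carry out the leading-constant count for odd $p$ along the lines you sketch, you will find that the mixed $(a_i,b_i)$ group contributes $\Exp(a_i b_i\mid M_i) = -s_0 t/(3M_i)$, because $\tau_i^{(1)}\tau_i^{(2)}=0$ leaves only $s_0\Exp[(1-2U)U]\cdot 2t/M_i$ with $U\sim\unif01$. Tracking this through the pairing count produces a leading constant carrying a factor $-s_0$ that is not present in~\eqref{eq:R-S-p-odd-bound}. The discrepancy traces to a sign slip in the paper's representation~\eqref{eq:S-expression}: since $\frac{s_0}{t}\sum_i L_{n,i}\1{L_{n,i}\leq t} = \frac{s_0}{2}\bigl(K_{n,t} - R_{n,t}\bigr)$, the correct identity is $S_{n,t} = \frac{s_0}{2}K_{n,t} - \frac{s_0}{2}R_{n,t} + W_{n,t}$, not $\frac{s_0}{2}K_{n,t} + \frac{1}{2}R_{n,t} + W_{n,t}$; one can verify this directly for $n=1$, $U_1\in(0,t]$. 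The constant prefactor cancels in the alternating sums of Lemma~\ref{lem:error-estimates} so the downstream theorems are unaffected, but do not calibrate your constants against~\eqref{eq:R-S-p-odd-bound} as written.
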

 
We give the proof of Proposition \ref{prop:R-S-cross-moments} later in this section. Lemma~\ref{lem:small-gaps-U}, on statistics of small gaps,
combined with 
Lemmas~\ref{lem:R-alternative} and~\ref{lem:S-bound} which represent, respectively, $R_{n,t}$ and $S_{n,t}$ in terms of small-gap functionals, 
enables us to represent $R_{n,t}$ and $S_{n,t}$ in the form
\begin{equation}
\label{eq:R-S-rep}
R_{n,t} = \sum_{i=1}^{K_n (0,t]} u_i, \text{ and }
S_{n,t} = s_0 \sum_{i=1}^{K_{n} (0,t]} \frac{1+u_i}{2} - \sum_{j=1}^{K_n (t,2t]} w ( 1 / v_j ), \end{equation}
where $u_i \sim \unif{-1}{1}$, $v_j \sim \unif{1}{2}$, and,
\emph{conditional} on the $K_n (0,t]$ and $K_n (t,2t]$, the random variables 
$u_1, u_2, \ldots$ and $v_1, v_2, \ldots$ are all mutually independent.
Write
    \begin{equation}
        \label{eq:W-def}
 W_{n,t} := - \sum_{j=1}^{K_n (t,2t]} w (1/v_j) .
     \end{equation}
Then~\eqref{eq:R-S-rep} is equivalent to
\begin{equation}
\label{eq:S-expression}
R_{n,t} = \sum_{i=1}^{K_{n,t}} u_i, \text{ and }
S_{n,t} = \frac{s_0}{2} K_{n,t}  + \frac{R_{n,t}}{2}   + W_{n,t} . \end{equation}
The expressions for the moments of $K_{n,t}$ from
Lemma~\ref{lem:small-gaps-K}, with the representation~\eqref{eq:S-expression} and the associated conditional
independence structure, made explicit in Lemma~\ref{lem:K-R-W} below, is our
starting point for the proof of 
Proposition~\ref{prop:R-S-cross-moments}.

\begin{remark}
    \label{rem:R-S-algebra}
With $w$ as defined at~\eqref{eq:w-def}, and $v_1 \sim \unif{1}{2}$, some calculus shows that the expectation of the each summand appearing in~\eqref{eq:W-def} is
\begin{equation}
\label{eq:gamma-def}
\Exp w (1/v_1) = \int_1^2 w (1/v) \ud u = - \gamma, \text{ where } \gamma := \frac{17}{6} - 4 \log 2 \approx 0.060745, \end{equation}
which, by comparison with the formula for $s_0$ from Proposition~\ref{prop:variance}, shows that 
\begin{equation}
    \label{eq:v0-gamma}
s_0 =  \frac{2}{3} -  2 \gamma.
\end{equation}
Thus from the representation~\eqref{eq:S-expression} we confirm that $\Exp R_{n,t} = 0$ (as is clear from~\eqref{eq:R-def}) and 
\[ \Exp S_{n,t} = \left( \frac{s_0}{2} + \frac{17}{6} - 4 \log 2 \right) \Exp K_{n,t}
= \frac{\Exp K_{n,t}}{3}  . \]
Moreover, it also follows from~\eqref{eq:S-expression} that 
\begin{align*} \Var R_{n,t} & = \Exp  \Var ( R_{n,t} \mid K_{n,t} ) + \Var \Exp ( R_{n,t} \mid K_{n,t} ) = \Exp ( K_{n,t} \cdot \Var u_1 )   = \frac{\Exp K_{n,t}}{3}  ,
\end{align*}
so the  relation~\eqref{eq:v0-gamma} recovers~$\Exp S_{n,t} = \Var R_{n,t}$, as at~\eqref{eq:R-S}.
\end{remark}

The next result summarizes the structure of the components of $S_{n,t}$ expressed in~\eqref{eq:S-expression}.

\begin{lemma}
    \label{lem:K-R-W}
    Let $n \in \N$ and $t \in (0,\frac{1}{2(n+1)} )$. 
Conditional on $K_{n,2t}  = k \in \ZP$,
the random variables $K_{n,t}$, $R_{n,t}$, $W_{n,t}$ have the representation
\[
\left( K_{n,t} , R_{n,t}, W_{n,t} \right) \eqd
\left( K, \sum_{i=1}^K u_i, - \sum_{j=1}^{k-K} w (1/v_j) \right) ,
\]
where $K \sim \Bin {k}{1/2}$, the $u_i \sim \unif{-1}{1}$, and $v_j \sim \unif{1}{2}$ are all independent.
\end{lemma}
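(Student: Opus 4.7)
The plan is to obtain everything from a single application of Lemma~\ref{lem:small-gaps-U} to the doubled interval $(0, 2t]$. This furnishes conditionally independent Bernoullis $\theta_0, \ldots, \theta_{n-1}$ (given $M_0, M_1, \ldots$) with success parameters $4t/M_i$, and, independent of them, i.i.d.\ uniforms $\gamma_0, \ldots, \gamma_{n-1} \sim \unif{0}{1}$, such that $\cK^g_n(0, 2t] = \sum_{i=0}^{n-1} \theta_i g(\gamma_i)$ for every $g:[0,1]\to\R$. Parametrizing a gap of length $L\in(0,2t]$ by $L = 2t\gamma$ gives $L \leq t \Leftrightarrow \gamma \leq 1/2$, $L/t = 2\gamma$, and $t/L = 1/(2\gamma)$; hence the test functions $g \equiv 1$, $g(u) = \1{u \leq 1/2}$, $g(u) = (1-4u)\1{u \leq 1/2}$, and $g(u) = -w(1/(2u))\1{u > 1/2}$ recover $K_{n,2t}$, $K_{n,t}$, $R_{n,t}$, and $W_{n,t}$, respectively, all as functionals of the \emph{same} pair $\{(\theta_i, \gamma_i)\}$.

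Next I would condition on $\{K_{n,2t} = k\}$: by property~(i) of Lemma~\ref{lem:small-gaps-U}, the $k$ marks $\gamma_i$ at indices where $\theta_i = 1$ are i.i.d.~$\unif{0}{1}$, regardless of which $k$ indices happen to be active. Each such mark falls in $[0, 1/2]$ independently with probability $1/2$, so $K_{n,t} \mid K_{n,2t}=k \sim \Bin{k}{1/2}$; denote its value by $K$. Conditional further on $K_{n,t} = K$, the $K$ marks in $[0,1/2]$ are i.i.d.~$\unif{0}{1/2}$, the $k-K$ marks in $(1/2,1]$ are i.i.d.~$\unif{1/2}{1}$, and the two sub-families are mutually independent. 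Setting $u_i := 1 - 4\gamma_i$ on the first sub-family yields i.i.d.~$\unif{-1}{1}$ variables, and $v_j := 2\gamma_j$ on the second yields i.i.d.~$\unif{1}{2}$ variables, each family independent of $K$; substituting into the expressions for $R_{n,t}$ and $W_{n,t}$ recovered above produces the claimed equality in distribution for $(K_{n,t}, R_{n,t}, W_{n,t})$.

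The main technical point, and the step I expect to require the most care, is ensuring that Lemma~\ref{lem:small-gaps-U} applies on the doubled interval $(0, 2t]$: that proof requires $M_i > 4t$ for all $i \leq n-1$, which follows from $(i+1)M_i > 1$ provided $t \leq 1/(4n)$. This is slightly stronger than the hypothesis $t < 1/(2(n+1))$, but is in force in every subsequent invocation of the present lemma (namely in the proof of Proposition~\ref{prop:R-S-cross-moments}, where $t \leq 1/(4(n+1))$), so no additional argument is needed there. As a fallback, one could apply Lemma~\ref{lem:small-gaps-U} separately on $(0,t]$ and $(t,2t]$ and verify, by a step-by-step analysis of where each split fraction $U_{i+1}$ lands, that the two resulting Bernoulli sequences are jointly conditionally independent given $M_0, M_1, \ldots$, after which the same $\Bin{k}{1/2}$ exchangeability argument closes the proof.
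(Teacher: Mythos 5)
Your proof is correct and follows essentially the same route as the paper: both hinge on Lemma~\ref{lem:small-gaps-U} applied to the doubled interval $(0,2t]$. What you do more carefully is express all four quantities $K_{n,2t}$, $K_{n,t}$, $R_{n,t}$, $W_{n,t}$ as functionals of a \emph{single} pair of sequences $\{(\theta_i,\gamma_i)\}$ coming from one invocation of the lemma, which makes the joint distribution --- in particular the conditional mutual independence of the $u_i$ and $v_j$ given $K_n(0,t]$ and $K_n(t,2t]$, which the paper asserts when stating~\eqref{eq:R-S-rep} without spelling out the underlying coupling --- completely transparent. The paper's proof instead quotes~\eqref{eq:S-expression} and the definition of $W_{n,t}$, and only applies the small-gaps lemma on $(0,2t]$ at the final step, with the test function $g(u)=\1{u\le 1/2}$, to identify $K_{n,t}\mid K_{n,2t}=k \sim \Bin{k}{1/2}$. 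So the ideas are the same; yours is the more self-contained write-up. Your observation about the hypothesis is also well taken: the representation on $(0,2t]$ genuinely needs $M_i > 4t$ for $i\le n-1$, hence $4nt \le 1$, whereas the stated hypothesis $t < \frac{1}{2(n+1)}$ only guarantees $M_i > 2t$ (and indeed $\frac{1}{2(n+1)} > \frac{1}{4n}$ for $n\ge 2$). This is a minor slip in the stated hypothesis of Lemma~\ref{lem:K-R-W} (and of Lemma~\ref{lem:K-R-W-moments}), but, as you note, it is harmless because every subsequent use goes through Proposition~\ref{prop:R-S-cross-moments}, where $t < \frac{1}{4(n+1)}$ is in force.
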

\begin{proof}
The representation for $R_{n,t}$ from~\eqref{eq:S-expression}
(coming from Lemma~\ref{lem:R-alternative} and Lemma~\ref{lem:small-gaps-U}) together with the definition of $W_{n,t}$ from~\eqref{eq:W-def} gives
\[ 
\left( K_{n,t} , R_{n,t}, W_{n,t} \right) \eqd
\left( K_n (0,t], \sum_{i=1}^{K_n (0,t]} u_i, - \sum_{j=1}^{K_n (t,2t]} w (1/v_j) \right) ,
\]
where, given $K_n (0,t]$ and $K_n (t,2t]$, the $u_i, v_j$ are all
mutually independent with $u_i \sim \unif{-1}{1}$
and $v_j \sim \unif{1}{2}$.
By~\eqref{eq:K-n-t-def},
$K_n (t,2t] = K_n (0,2t] - K_n (0,t]$ and,
with the notation from~\eqref{eq:U-n-t-def}, 
$K_n (0,t] = \cK_n^g (0,2t]$
with $g (u) := \1 { u \leq 1/2 }$. Hence 
Lemma~\ref{lem:small-gaps-U} shows that, given $K_n (0,2t] = k$,
$K_{n} (0,t] \sim \Bin{k}{1/2}$.
\end{proof}

We will use Lemma~\ref{lem:K-R-W}
to obtain, in Lemma~\ref{lem:K-R-W-moments} below,
estimates for the mixed moments of
$K_{n,t}$, $R_{n,t}$, and $W_{n,t}$. In the proof,
we will make use of auxiliary results stated in Appendix \ref{sec:appendix} below, including estimates of moments of random sums, 
like those appearing in the triple representation
in Lemma~\ref{lem:K-R-W},
given in Lemma~\ref{lem:K-statistic-moments}.

\begin{lemma}
\label{lem:K-R-W-moments}
\begin{thmenumi}[label=(\roman*)]
\item
\label{lem:K-R-W-moments-i}
Suppose that
    $n \in \N$ and $t \in (0,\frac{1}{2(n+1)} )$. 
 Then, for $a, b, c \in \ZP$, 
        \begin{align}
\label{eq:K-R-W_odd}
      \Exp \bigl( K_{n,t}^a R_{n,t}^{2b+1} W_{n,t}^c \bigr) & = 0 .
      \end{align}
\item
There is a constant $C <\infty$ such that for all $n \in \N$, all $t \in (0,\frac{1}{2(n+1)} )$, and all $a, b, c \in \ZP$
with $(a+b+c)^2 \leq n^2 t$, 
\label{lem:K-R-W-moments-ii}
\begin{align}
         \label{eq:K-R-W_even_crude}
            \Exp \bigl| K_{n,t} ^a R_{n,t}^{2b} W_{n,t}^c \bigr| \leq C \frac{(2b)!}{b!}  (4 n^2 t)^{a+b+c} .
    \end{align}
\item
\label{lem:K-R-W-moments-iii}
Let $\nu \in (1,\frac{3}{2})$.  There are  constants $\delta>0$ and  $C < \infty$ such that, 
for all $n \in \N$, for all $t \in ( n^{-\nu}, \frac{1}{2(n+1)} )$, and for all~$a, b, c \in \ZP$ with $1 \leq a+b+c \leq \delta \log n$, 
      \begin{align}
         \label{eq:K-R-W_even}
           \left|  \Exp \bigl( K_{n,t} ^a R_{n,t}^{2b} W_{n,t}^c \bigr) - \frac{(2b)! \cdot \gamma^c \cdot  (n^2 t)^{a+b+c}}{2^{a+b+c} \cdot 6^b \cdot b!} \right| \leq C  (b+1)!  (17 n^2 t)^{a+b+c-(1/2)}. 
    \end{align}
    \end{thmenumi}
\end{lemma}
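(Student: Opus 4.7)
The plan is to exploit the conditional representation in Lemma~\ref{lem:K-R-W}: given $K_{n,2t}=k$, the triple $(K_{n,t},R_{n,t},W_{n,t})$ has the same law as $(K, S_K, T_{k-K})$, where $K\sim\Bin{k}{1/2}$, $S_K := \sum_{i=1}^{K}u_i$ with $u_i$ i.i.d.\ $\unif{-1}{1}$, and $T_{k-K} := -\sum_{j=1}^{k-K}w(1/v_j)$ with $v_j$ i.i.d.\ $\unif{1}{2}$, all mutually independent. Part~\ref{lem:K-R-W-moments-i} then reduces to an observation: since $u_1\eqd -u_1$, the conditional distribution of $S_K$ given $K$ is symmetric, so $\Exp(S_K^{2b+1}\mid K)=0$; by the conditional independence of $S_K$ and $T_{k-K}$ given $(K,k)$, this passes through the remaining $K^a W^c$ factor, and~\eqref{eq:K-R-W_odd} follows.

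For the even case, I would treat the conditional moments of $S_K$ and $T_{k-K}$ as polynomials in their indices. For the uniform-on-$[-1,1]$ walk, $\Exp S_k^{2b}$ is a polynomial $p_{2b}(k)$ of degree $b$ with leading term $\frac{(2b)!}{2^b b!}(k/3)^{b}$ (the pair-partition contribution, $u_i$ having variance~$1/3$), with the remaining terms of lower degree in~$k$ and bounded by $C^b (2b)!/(2^b b!)\,k^{b-1}$ for the relevant range of $(b,k)$; this can be read off from a cumulant/moment expansion, or from Lemma~\ref{lem:K-statistic-moments} in the appendix. Similarly, $T_j/j\to -\Exp w(1/v_1)=\gamma$ and $\Exp T_j^c$ is a polynomial $q_c(j)$ of degree $c$ with leading coefficient~$\gamma^c$ and bounded subleading corrections (since $|w(1/v_j)|\leq s_0$). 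Substituting into
\[
\Exp(K_{n,t}^a R_{n,t}^{2b} W_{n,t}^c\mid K_{n,2t}=k)=\Exp\!\left[K^a\, p_{2b}(K)\,q_c(k-K)\right],\quad K\sim\Bin{k}{1/2},
\]
leading order gives $\frac{(2b)!\gamma^c}{2^b b!\,3^b}\Exp[K^{a+b}(k-K)^c]$, which for $K\sim\Bin{k}{1/2}$ equals $k^{a+b+c}/2^{a+b+c}$ to leading order in~$k$, with polynomial subleading terms of the appropriate factorial size coming from binomial fluctuations around $k/2$.

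For part~\ref{lem:K-R-W-moments-ii}, the crude bound, I would combine a bound $|p_{2b}(k)|\leq C\,\frac{(2b)!}{b!}(k/3)^b$ valid uniformly in the considered range $(a+b+c)^2\leq n^2 t$ (which keeps $b/k$ bounded, since $K_{n,t}$ concentrates around $n^2 t/2$) with $|q_c(j)|\leq s_0^c j^c$ and the deterministic bound $K,k-K\leq k$, yielding $|\Exp(K_{n,t}^a R_{n,t}^{2b} W_{n,t}^c\mid K_{n,2t})|\leq C(2b)!/b!\cdot s_0^c 3^{-b} K_{n,2t}^{a+b+c}$, and then applying Lemma~\ref{lem:small-gaps-K}\ref{lem:small-gaps-K-i} to the $(a+b+c)$-th moment of $K_{n,2t}$ (for which $2n\cdot 2t\leq 1$ holds in the present range). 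Collecting constants into $4n^2 t$ absorbs the $s_0,3^{-1}$ and exponential factors.

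For part~\ref{lem:K-R-W-moments-iii}, the sharp asymptotic, I would isolate the leading term identified above, $\frac{(2b)!\gamma^c}{2^{a+b+c}\,6^b b!}\Exp K_{n,2t}^{a+b+c}$, and invoke Lemma~\ref{lem:small-gaps-K}\ref{lem:small-gaps-K-ii} (with $2t$ in place of $t$, which gives $(n^2t)^{a+b+c}$) to obtain the claimed main term, with an $O((a+b+c)/\sqrt n)$ relative error. The remaining error terms are (a) the polynomial remainders in $p_{2b}$ and $q_c$ (each one degree lower in $k$), and (b) the binomial subleading corrections to $\Exp[K^{a+b}(k-K)^c\mid k]$; each such term carries an extra factor $k^{-1}$, and after taking expectation against $K_{n,2t}$ via Lemma~\ref{lem:small-gaps-K}\ref{lem:small-gaps-K-i} this converts into $(n^2 t)^{-1}$, which is $O((n^2 t)^{-1/2})$ throughout the range $t>n^{-\nu}$ with $\nu<3/2$. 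The main obstacle is bookkeeping: we need to show that all subleading contributions are captured by a single clean bound of the form $C(b+1)!(17 n^2 t)^{a+b+c-1/2}$, which requires matching the factorial growth from $p_{2b}$ (which scales like $(2b)!/b!\asymp (b+1)!2^b$) against the constants promoted into the base $17$, while keeping $4^k k^2$-type losses harmless via the constraint $a+b+c\leq \delta\log n$, exactly as in the analogous manoeuvre in the proof of Lemma~\ref{lem:small-gaps-K}\ref{lem:small-gaps-K-ii}.
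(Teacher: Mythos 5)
Your approach is essentially the paper's: condition on $K_{n,2t}$ via Lemma~\ref{lem:K-R-W}, use symmetry of $\unif{-1}{1}$ for part~\ref{lem:K-R-W-moments-i}, invoke the moment expansions of Lemma~\ref{lem:K-statistic-moments} for the partial sums $\mu_{2b}$ and $\mu_c'$, compare $\Exp[K^{a+b}(k-K)^c]$ to $(k/2)^{a+b+c}$ (the paper's Lemma~\ref{lem:binomial-product}), and close with the $K_{n,2t}$ moment estimates of Lemma~\ref{lem:small-gaps-K}. The key lemmas and decomposition are identical, so I'll flag two points of imprecision in your narration rather than a difference of route.

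First, in part~\ref{lem:K-R-W-moments-iii} you say that the binomial subleading correction to $\Exp[K^{a+b}(k-K)^c]$ carries ``an extra factor $k^{-1}$'' which then converts into $(n^2t)^{-1}$, and you treat the final $(n^2t)^{-1/2}$ as a convenient weakening. This mislocates the dominant error: $K\sim\Bin{k}{1/2}$ fluctuates around $k/2$ on the scale $\sqrt{k}$, so replacing $K$ and $k-K$ by $k/2$ costs a relative error of order $k^{-1/2}$, not $k^{-1}$ (this is exactly what Lemma~\ref{lem:binomial-product} quantifies). That $k^{-1/2}$ is precisely the source of the $(n^2t)^{a+b+c-1/2}$ in~\eqref{eq:K-R-W_even}; the $k^{-1}$ remainders from $p_{2b}$ and $q_c$ are genuinely subdominant, and the $n^{-1/2}$ error from Lemma~\ref{lem:small-gaps-K}\ref{lem:small-gaps-K-ii} is also smaller since $n^2t<n$. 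Your bookkeeping would in the end produce the right exponent, but the attribution of where it comes from should be corrected.

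Second, in part~\ref{lem:K-R-W-moments-ii} the asserted uniform bound $|p_{2b}(k)|\leq C\,\frac{(2b)!}{b!}(k/3)^b$ (with $C$ independent of $b$ and~$k$) is not correct as stated: the lower-order term in $\mu_{2b}(k)$ from Lemma~\ref{lem:K-statistic-moments}\ref{lem:K-statistic-moments-i} scales like $b^2\frac{(2b)!}{2^b b!}k^{b-1}$, which is not dominated by $\frac{(2b)!}{6^bb!}k^b$ when $k$ is comparable to $b$. (The constraint $(a+b+c)^2\leq n^2t$ restricts $(n,t,a,b,c)$, not the random variable $K_{n,t}$, so it does not force $b/K_{n,t}$ to be small.) The paper sidesteps this by carrying both the leading and error terms of $\mu_{2b}$ into the expectation over $K_{n,2t}$, producing $\Exp(K_{n,2t}^{a+b+c})+b\,\Exp(K_{n,2t}^{a+b+c-1})$, and only then invoking Lemma~\ref{lem:small-gaps-K}\ref{lem:small-gaps-K-i}; the hypothesis $(a+b+c)^2\leq n^2t$ is used at that stage to tame the $\exp(k^2/(4n^2t))$ factor and the $b/(n^2t)$ correction. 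Keep both terms, and your argument matches the paper's.
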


In the following proof, and frequently later on, we will need simple inequalities relating $(2n)!$ and $n!$ derived from
\begin{equation}
\label{eq:simple-binomial}
\binom{2n}{n} \leq 2^{2n} = \sum_{k=0}^{2n} \binom{2n}{k} \leq (2n+1) \binom{2n}{n} .
\end{equation}
\begin{proof}[Proof of Lemma~\ref{lem:K-R-W-moments}]
 Lemma~\ref{lem:K-R-W} shows that
$(K_{n,t}, R_{n,t}, W_{n,t})$ and 
$(K_{n,t}, -R_{n,t}, W_{n,t})$
have the same distribution, and this yields~\eqref{eq:K-R-W_odd}, and hence
proves part~\ref{lem:K-R-W-moments-i}.

For parts~\ref{lem:K-R-W-moments-ii}--\ref{lem:K-R-W-moments-iii}, 
denote, similarly to Lemma~\ref{lem:K-statistic-moments}, the moments  
\[ \mu_k (n) := \Exp \biggl( \Bigl( \sum_{i=1}^n u_i \Bigr)^k \biggr), \text{ and }
\mu'_k (n) := \Exp \biggl( \Bigl( \sum_{j=1}^n w_j \Bigr)^k \biggr),
\]
for independent sequences $u_i \sim \unif{-1}{1}$ and $w_j := -w (1/v_j)$, $v_j \sim \unif{1}{2}$. 
By Lemma~\ref{lem:K-R-W}, 
\begin{align*}
    \Exp \bigl| K_{n,t}^{a} R_{n,t}^{2b} W_{n,t}^c \bigr| & 
    = \Exp \left( \Exp \left( \bigl| K_{n,t}^{a} R_{n,t}^{2b} W_{n,t}^c \bigr| \mid K_{n,t}, K_{n,2t} \right) \right) \\
    & = \Exp \left( \Exp \left( K_{n,t}^a \Exp ( R_{n,t}^{2b} \mid K_{n,t} ) \Exp ( |W_{n,t}|^c \mid K_{n,t}, K_{n, 2t} ) \mid K_{n,2t} \right) \right) \\
    & = \Exp \left( \Exp \left( K_{n,t}^a \mu_{2b} ( K_{n,t} ) \Exp ( |W_{n,t}|^c \mid K_{n,t}, K_{n, 2t} ) \mid K_{n,2t} \right) \right).
\end{align*} 
Since $\sup_{0 \leq t \leq 1} | w(t) | = s_0$, 
 $| W_{n,t} | \leq s_0 K_{n,2t}$ by~\eqref{eq:W-def}, and $K_{n,t} \leq K_{n,2t}$ by~\eqref{eq:K-n-t-def}, so
\begin{align}
\label{eq:KRW-to-K}
    \Exp \bigl| K_{n,t}^{a} R_{n,t}^{2b} W_{n,t}^c \bigr| & 
     \leq s_0^c \Exp  \left( K_{n,2t}^{a+c} \mu_{2b} ( K_{n,t} )  \right)
     \leq \Exp  \left( K_{n,2t}^{a+c} \mu_{2b} ( K_{n,t} )  \right)
     ,
\end{align}
since $s_0 \in (0,1)$.
 By Lemma~\ref{lem:K-statistic-moments}\ref{lem:K-statistic-moments-i} applied with $\xi \sim \unif{-1}{1}$,
noting that $\Exp ( \xi^r ) = \frac{1}{1+r}$ for $r \in \ZP$, there is a constant $C < \infty$ such that, for all $b \in \ZP$ and all $n \in \N$,
\begin{align}
\label{eq:mu-2b}
    \left| \mu_{2b} ( n ) - \frac{(2b)!}{6^b b!} n^b \right| \leq C b \cdot  \frac{(2b)!}{2^b b!} n^{b-1} 
    \leq C 2^b b \cdot b! \cdot n^{b-1} ,
\end{align}
by~\eqref{eq:simple-binomial}. 
From~\eqref{eq:KRW-to-K}--\eqref{eq:mu-2b} there is $C<\infty$ such that, for all $n \in \N$ and all $a,b,c \in \ZP$,
\begin{align*}
    \Exp \bigl| K_{n,t}^{a} R_{n,t}^{2b} W_{n,t}^c \bigr| 
    \leq C \frac{(2b)!}{2^b b!} \Exp \left( K_{n,2t}^{a+b+c} + b K_{n,2t}^{a+b+c-1} \right).
\end{align*}
Now using Lemma~\ref{lem:small-gaps-K}\ref{lem:small-gaps-K-i}, we
verify part~\ref{lem:K-R-W-moments-ii}.

Finally, for part~\ref{lem:K-R-W-moments-iii}, suppose $a+b+c \geq 1$. From Lemma~\ref{lem:K-R-W}, with   $K \sim \Bin{k}{1/2}$,
\begin{align}
\label{eq:triple-mu}
\Exp ( K_{n,t}^a R_{n,t}^{2b} W_{n,t}^c \mid  K_n (0,2t]  = k)
& = \Exp \left( K^a \mu_{2b} (K ) \mu'_c ( k - K ) \right) . \end{align}
Here,  by Lemma~\ref{lem:K-statistic-moments}\ref{lem:K-statistic-moments-ii} applied with $\xi =  - w (1/v)$, $v \sim \unif{1}{2}$,
and the fact that $| w_j | \leq s_0 < 3/5$ and $\Exp \xi = \gamma \in (0,3/5)$, by~\eqref{eq:gamma-def},
for all $c \in \ZP$ and all $n \in \N$,
\begin{align*}
    \left| \mu'_{c} ( n ) - \gamma^c n^c \right| \leq 3 c^2 (3/5)^c n^{c-1}.
\end{align*}
Combining this bound with~\eqref{eq:mu-2b}, using
$p q - r s = r (q-s) + s(p-r) + (p-r)(q-s)$, we obtain, for some $C < \infty$ and all $b, c \in \ZP$ and all $n, m \in \N$, 
\begin{align}
\label{eq:two-mu-bound}
    \left| \mu_{2b} ( n) \mu'_c (m) - \frac{(2b)!}{6^b b!} \gamma^c n^b m^c \right|
    & \leq 
    C (4/5)^c \cdot 2^b b! \cdot \left(    3^{-b}  n^b m^{c-1} \2 { c \geq 1}
    +   b  m^c    n^{b-1}   \2 { b \geq 1} \right).
\end{align}
Using the bound~\eqref{eq:two-mu-bound} in~\eqref{eq:triple-mu}, we get, for $K \sim \Bin{k}{1/2}$,
\begin{align*}
&{} \left|    \Exp ( K_{n,t}^a R_{n,t}^{2b} W_{n,t}^c \mid  K_n (0,2t]  = k)
    - \frac{(2b)!}{6^b b!} \gamma^c \Exp ( K^{a+b} (k-K)^c  ) \right|\\
    & {} \qquad {} \leq C   b! \cdot \Exp ( K^{a+b} (k-K)^{c-1}   )  \2 { c \geq 1 }    + C 2^b b   \cdot b! \cdot \Exp ( K^{a+b-1} (k-K)^{c}  ) \2 {b \geq 1} \\ 
    & {} \qquad {} \leq C2^b ( b+1)! \max (a+b, c) k^{a+b+c-1},
\end{align*}
by an application of Lemma~\ref{lem:binomial-product}. Another application of Lemma~\ref{lem:binomial-product} shows that 
    \[ 
\left|     \Exp ( K^{a+b} (k-K)^c   ) - (k/2)^{a+b+c} \right|
\leq \max (a+b, c) k^{a+b+c-(1/2)}.
    \]
    It follows that
\begin{align*}
&{}\left|    \Exp ( K_{n,t}^a R_{n,t}^{2b} W_{n,t}^c  )
    -  \frac{(2b)! \gamma^c}{2^{a+b+c} \cdot 6^b \cdot b!} \Exp (  K_{n,2t}^{a+b+c} ) \right| \\
    & {} \qquad {} 
    \leq C 2^b (b+1)!  \max (a+b, c) \left\{
    \Exp (  K_{n,2t}^{a+b+c-(1/2)} ) + \Exp (  K_{n,2t} ^{a+b+c-1} ) \right\} \\
    & {} \qquad {} 
    \leq C (b+1)!  (16 n^2 t)^{a+b+c-(1/2)} ,
    \end{align*}
    by Lemma~\ref{lem:small-gaps-K}\ref{lem:small-gaps-K-i},
    provided that $(a+b+c)^2 \leq n^2 t$. 
    For $\nu \in (1,\frac{3}{2})$, take $\delta >0$ as in Lemma~\ref{lem:small-gaps-K}\ref{lem:small-gaps-K-ii}. 
Assuming that $t \geq n^{-\nu}$ and $a+b+c \leq \delta \log n$, we have that, indeed, $(a+b+c)^2 \leq n^2 t$ for all $n$ large enough. Moreover, applying  Lemma~\ref{lem:small-gaps-K}\ref{lem:small-gaps-K-ii} to estimate $\Exp (  K_{n,2t}^{a+b+c} )$, we obtain~\eqref{eq:K-R-W_even},
noting that $n^2 t < n$, so that the $n^{-1/2}$ term in the error coming from that lemma is negligible compared to $(n^2 t)^{-1/2}$. This proves part~\ref{lem:K-R-W-moments-iii}. 
\end{proof}

\begin{proof}[Proof of Proposition~\ref{prop:R-S-cross-moments}]
Let $\nu \in (1,\frac{3}{2})$ and $\delta >0$ as in Lemma~\ref{lem:K-R-W-moments}\ref{lem:K-R-W-moments-iii}. Suppose $p, q \in \ZP$ with $1 \leq p + q \leq \delta \log n$.
    We can use~\eqref{eq:S-expression} and a trinomial expansion to write
    \begin{equation}
        \label{eq:S-trinomial}
     S_{n,t}^q = \sum_{\substack{a+b+c =q}} \frac{q!}{a! b! c!} \frac{s^a_0}{2^{a+b}} K_{n,t}^a R_{n,t}^b W_{n,t}^c ,
    \end{equation}
        where $W_{n,t}$ is defined at~\eqref{eq:W-def}, and the sum is over
        $a,b,c \in \ZP$ whose sum is equal to~$q$.
   Taking expectations in~\eqref{eq:S-trinomial} and applying~\eqref{eq:K-R-W_odd},
   we obtain
\begin{align}
\label{eq:R-S-to_K-R-W}
     \Exp \left( R_{n,t}^{p} S_{n,t}^q \right) & = 
     \sum_{\substack{a+b+c =q \\ p+b \in 2 \Z}} \frac{q!}{a! b! c!} \frac{ s^a_0}{2^{a+b}} \Exp \bigl( K_{n,t}^a R_{n,t}^{p+b} W_{n,t}^c  \bigr) .\end{align}
Provided $t \in ( n^{-\nu}, \frac{1}{2(n+1)})$, we have $n^2 t \geq n^{2-\nu}$ and so $a+\frac{p+b}{2}+c = q + \frac{p}{2} - \frac{b}{2} \leq p+q$ where $(p+q)^2 \leq \delta^2 (\log n)^2 \leq n^2 t$ for all but finitely many $n\in\N$. Hence the hypotheses of parts~\ref{lem:K-R-W-moments-ii} and~\ref{lem:K-R-W-moments-iii} of Lemma~\ref{lem:K-R-W-moments} are both satisfied when considering $\Exp ( K_{n,t}^a R_{n,t}^{p+b} W_{n,t}^c )$. 
     
     First suppose that $p \in 2\ZP$; note that $q+\frac{p}{2} \geq 1$ in this case.
     In the sum in~\eqref{eq:R-S-to_K-R-W}, we show that the terms with $b=0$ are dominant. To this end, consider
\begin{align*}
& {} \left|    \Exp \bigl( R_{n,t}^{p} S_{n,t}^q \bigr) - 
    \sum_{\substack{a+c =q}} \frac{q!}{a! c!} \frac{ s^a_0}{2^{a}} \Exp \bigl( K_{n,t}^a R_{n,t}^{p} W_{n,t}^c  \bigr) \right|\\
  & {} \qquad {} \leq \sum_{b \in 2 \N, \, a+b+c=q}
  \frac{q!}{a! b! c!} \frac{ s^a_0}{2^{a+b}} \Exp \bigl| K_{n,t}^a R_{n,t}^{p+b} W_{n,t}^c  \bigr| \\
  & {} \qquad {} \leq C ( 4n^2 t)^{q +\frac{p}{2}} \sum_{b \in 2 \N, \, b \leq q} \binom{q}{b}
  \frac{(p+b)!}{2^b \left( \frac{p+b}{2} \right)!}  (n^2 t)^{-\frac{b}{2}} 
  \sum_{a=0}^{q-b} \binom{q-b}{a} 
 \frac{ s^a_0}{2^{a}} ,
\end{align*}
using the upper bound from Lemma~\ref{lem:K-R-W-moments}\ref{lem:K-R-W-moments-ii}. Since $s_0/2 < 1$,
using~\eqref{eq:simple-binomial},
\begin{align*}
& {} \left|    \Exp \left( R_{n,t}^{p} S_{n,t}^q \right) - 
    \sum_{\substack{a+c =q}} \frac{q!}{a! c!} \frac{ s^a_0}{2^{a}} \Exp \left( K_{n,t}^a R_{n,t}^{p} W_{n,t}^c  \right) \right|\\
  & {} \qquad {} \leq  C   ( 8 n^2 t)^{q +\frac{p}{2}} \sum_{b \in 2 \N, \, b \leq q}    \left( \tfrac{p+b}{2} \right)!
  (n^2 t)^{-\frac{b}{2}} \\
   & {} \qquad {} \leq  C    \left( \tfrac{p}{2} \right)! \cdot ( 8 n^2 t)^{q +\frac{p}{2}} \sum_{b \in 2 \N, \, b \leq q}    \left( \tfrac{p+q}{2} \right)^b
  (n^2 t)^{-\frac{b}{2}} 
,
\end{align*}
since $(x+y)! \leq (x+y)^y x!$ for $x,y \in \ZP$. Now
\begin{align*}
\sum_{b \in 2 \N, \, b \leq q}    \left( \tfrac{p+q}{2} \right)^b
  (n^2 t)^{-\frac{b}{2}}  
& \leq \left( \tfrac{p+q}{2} \right)^2 (n^2 t)^{-1} 
\sum_{\ell = 0}^\infty
  \left(\frac{(p+q)^2}{4n^2 t}\right)^{\ell}   \leq (p+q)^2 (n^2 t)^{-1} ,
\end{align*}
using the fact that  $(p+q)^2 \leq  n^2 t$. Thus, for some $C < \infty$ and all $p, q$ with $p+q \leq \delta \log n$,
\begin{align}
\label{eq:R-S-even}
   \left|    \Exp \left( R_{n,t}^{p} S_{n,t}^q \right) - 
    \sum_{\substack{a+c =q}} \frac{q!}{a! c!} \frac{ s^a_0}{2^{a}} \Exp \left( K_{n,t}^a R_{n,t}^{p} W_{n,t}^c  \right) \right|   \leq  C   \left( \tfrac{p}{2} \right)! \cdot ( 9 n^2 t)^{q +\frac{p}{2}-1} .
\end{align}
     On the other hand, from Lemma~\ref{lem:K-R-W-moments}\ref{lem:K-R-W-moments-iii} it follows that 
\begin{align*}
& \left|  \sum_{\substack{a+c =q}} \frac{q!}{a! c!} \frac{ s^a_0}{2^{a}} \Exp \left( K_{n,t}^a R_{n,t}^{p} W_{n,t}^c  \right) 
-  \sum_{\substack{a+c =q}} \frac{q!}{a! c!} \frac{ p! \cdot s^a_0 \cdot \gamma^c}{2^{2a+c} \cdot 12^{\frac{p}{2}} \cdot \left( \tfrac{p}{2} \right)!} ( n^2 t)^{q+\frac{p}{2}} \right| \\
& {} \qquad {} \leq C \sum_{\substack{a+c =q}}   \frac{q!}{a! c!} \frac{ s^a_0}{2^{a}} \left( \tfrac{p}{2}+1 \right)! ( 17 n^2 t)^{q+\frac{p}{2}-\frac{1}{2}}   \leq C 34^{q+\frac{p}{2}}   \left( \tfrac{p}{2}+1 \right)! ( n^2 t)^{q+\frac{p}{2}-\frac{1}{2}} .
\end{align*}
Furthermore,
\begin{align*}
    \sum_{\substack{a+c =q}} \frac{q!}{a! c!} \frac{ p! \cdot s^a_0 \cdot \gamma^c}{2^{2a+c} \cdot 12^{\frac{p}{2}} \cdot \left( \tfrac{p}{2} \right)!} ( n^2 t)^{q+\frac{p}{2}}
    & =   \frac{p! }{2^q \cdot 12^{\frac{p}{2}} \cdot (\frac{p}{2} )!} (n^2 t)^{q+\frac{p}{2}} 
       \sum_{a = 0}^q  \binom{q}{a} \frac{s^a_0 \gamma^{q-a}}{2^{a}} 
  \\
     & = \frac{p! }{2^q \cdot 12^{\frac{p}{2}} \cdot (\frac{p}{2} )!} (n^2 t)^{q+\frac{p}{2}} 
       \left( \frac{s_0}{2} + \gamma \right)^q 
  \\
     & = \frac{p!}{2^{\frac{p}{2}} (\frac{p}{2})!} \left( \frac{n^2 t}{6} \right)^{q+\frac{p}{2}} 
   ,
     \end{align*}
     using the relation~\eqref{eq:v0-gamma} between $s_0$ and $\gamma$.
Combined with~\eqref{eq:R-S-even}, we verify~\eqref{eq:R-S-p-even-bound}.

Finally, suppose that $p-1 \in 2\ZP$. 
Then
     if $q=0$ we have $\Exp ( R_{n,t}^p ) =0$ and~\eqref{eq:R-S-p-odd-bound} is trivial, 
     so suppose that $q \in \N$; note that $q+\frac{p}{2} \geq \frac{3}{2}$ in this case. 
          In the sum in~\eqref{eq:R-S-to_K-R-W}, we show that now the terms with $b=1$ are dominant. Now, by~\eqref{eq:R-S-to_K-R-W}, 
          \begin{align*}
& {} \left|    \Exp \bigl( R_{n,t}^{p} S_{n,t}^q \bigr) - 
    \sum_{\substack{a+c =q-1}} \frac{q!}{a! c!} \frac{ s^a_0}{2^{a+1}} \Exp \bigl( K_{n,t}^a R_{n,t}^{p+1} W_{n,t}^c  \bigr) \right|\\
  & {} \qquad {} \leq \sum_{b-1 \in 2 \N, \, a+b+c=q}
  \frac{q!}{a! b! c!} \frac{ s^a_0}{2^{a+b}} \Exp \bigl| K_{n,t}^a R_{n,t}^{p+b} W_{n,t}^c  \bigr| \\
  & {} \qquad {} \leq C  ( 4 n^2 t)^{q +\frac{p}{2}} \sum_{b - 1 \in 2 \N, \, b \leq q} \binom{q}{b}
  \frac{(p+b)!}{2^b \left( \frac{p+b}{2} \right)!}  (n^2 t)^{-\frac{b}{2}} 
  \sum_{a=0}^{q-b} \binom{q-b}{a} 
 \frac{ s^a_0}{2^{a}} ,
\end{align*}
using the upper bound from Lemma~\ref{lem:K-R-W-moments}\ref{lem:K-R-W-moments-ii}. Now following a similar argument to that leading to~\eqref{eq:R-S-even} we obtain, for some $C < \infty$ and all $p, q$ with $p+q \leq \delta \log n$,
\begin{align}
\label{eq:R-S-odd}
   \left|    \Exp \left( R_{n,t}^{p} S_{n,t}^q \right) - 
    \sum_{\substack{a+c =q-1}} \frac{q!}{a! c!} \frac{ s^a_0}{2^{a+1}} \Exp \left( K_{n,t}^a R_{n,t}^{p+1} W_{n,t}^c  \right) \right|   \leq  C   \left( \tfrac{p}{2} \right)! \cdot ( 9 n^2 t)^{q +\frac{p}{2}-\frac{3}{2}} .
\end{align}
     On the other hand, from Lemma~\ref{lem:K-R-W-moments}\ref{lem:K-R-W-moments-iii} it follows that 
\begin{align*}
& \left|  \sum_{\substack{a+c =q-1}} \frac{q!}{a! c!} \frac{ s^a_0}{2^{a+1}} \Exp \left( K_{n,t}^a R_{n,t}^{p+1} W_{n,t}^c  \right) 
-  \sum_{\substack{a+c =q-1}} \frac{q!}{a! c!} \frac{ (p+1)! \cdot s^a_0 \cdot \gamma^c}{2^{2a+c+1} \cdot 12^{\frac{p+1}{2}} \cdot \left( \tfrac{p+1}{2} \right)!} ( n^2 t)^{q+\frac{p-1}{2}} \right| \\
& {} \qquad {} \leq C \sum_{\substack{a+c =q-1}}   \frac{q!}{a! c!} \frac{ s^a_0}{2^{a}}   \left( \tfrac{p+1}{2}+1 \right)! ( 17 n^2 t)^{q+\frac{p}{2}-1}   \leq C 34^{q+\frac{p}{2}}   \left( \tfrac{p+1}{2}+1 \right)! ( n^2 t)^{q+\frac{p}{2}-1} .
\end{align*}
Furthermore,
\begin{align*}
    \sum_{\substack{a+c =q-1}} \frac{q!}{a! c!} \frac{ (p+1)! \cdot s^a_0 \cdot \gamma^c}{2^{2a+c+1} \cdot 12^{\frac{p+1}{2}} \cdot \left( \tfrac{p+1}{2} \right)!} 
    & =   \frac{q (p+1)! }{2^q \cdot 12^{\frac{p+1}{2}} \cdot (\frac{p+1}{2} )!} 
       \sum_{a = 0}^{q-1}  \binom{q-1}{a} \frac{s^a_0 \gamma^{q-1-a}}{2^{a}} 
  \\
     & = \frac{q (p+1)! }{2^q \cdot 12^{\frac{p+1}{2}} \cdot (\frac{p+1}{2} )!}  
       \left( \frac{s_0}{2} + \gamma \right)^{q-1} 
  \\
     & = \frac{3 q (p+1)!}{6^q \cdot 12^{\frac{p+1}{2}} (\frac{p+1}{2})!}  
   ,
     \end{align*}
     using~\eqref{eq:v0-gamma}.
Combined with~\eqref{eq:R-S-odd}, we verify~\eqref{eq:R-S-p-odd-bound}.  
\end{proof}

\section{Completing the proofs of the main  theorems}
\label{sec:hermite}

In this section we combine the ingredients developed so far with an expansion in Hermite polynomials to prove our quantitative CLTs for $N_t$ (Theorem~\ref{thm:n-b-e}) and $M_n$ (Theorem~\ref{thm:clt}). A key intermediate result is Proposition~\ref{prop:clt-with-error} below.
Recall the definitions of $v(t)$, $R_{n,t}$ and $S_{n,t}$ from~\eqref{eq:var-t}, \eqref{eq:R-def} and~\eqref{eq:S-def}, respectively, and 
for $\ell \in \ZP$ define
\begin{equation}
\label{eq:E-def}    
\cE^\ell _{n,t}  := v(t)^{-\frac{\ell+1}{2}} \sum_{k=0}^{\lfloor(\ell+1)/2\rfloor}\frac{(-1)^{k+1}}{2^k k! (\ell-2k+1)!}
 R_{n,t}^{\ell-2k+1} S_{n,t}^k. 
\end{equation}
The following result reduces the proof of Theorem~\ref{thm:n-b-e} to controlling (sums of) the quantities $\cE^\ell_{n,t}$ from~\eqref{eq:E-def}. Let $\Gamma$ denote the Euler gamma function, so $\Gamma (x+1) = x!$, $x \in \ZP$.

\begin{proposition}
    \label{prop:clt-with-error}
Let $\nu \in (1,\frac{3}{2})$, and let $\delta >0$ be as in Proposition~\ref{prop:R-S-cross-moments}. Then there exist  constants $c_0 >0$ and $C < \infty$ such that, for all $n \in \N$ and $t >0$ such that $n^{1-\nu} \leq nt \leq c_0$,  
\begin{align}
\label{eq:clt-with-error}
  \sup_{x \in \R} \left| \Pr \left( \frac{N_t -\Exp N_t}{\sqrt { \Var N_t}} \leq x  \right) - \Phi (x) \right| & \leq 
  \frac{C}{n^2 t^{3/2}} + 
\sum_{\ell=0}^{2 \lfloor (\delta/3) \log n \rfloor + 1} 2^{\ell/2} \Gamma \bigl( \tfrac{\ell+1}{2} \bigr)  \left| \Exp \cE^\ell_{n,t}  \right| .
  \end{align}
\end{proposition}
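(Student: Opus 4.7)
The plan is to combine the conditional Berry--Esseen estimate of Lemma~\ref{lem:b-e-conditional} with a two-variable Taylor (Hermite--Edgeworth) expansion, thereby reducing everything to the mixed moments of $R_{n,t}$ and $S_{n,t}$ supplied by Proposition~\ref{prop:R-S-cross-moments}. First, I would fix $x\in\R$, condition on $\cF_n$ (the hypothesis $t<1/(4(n+1))$ of Lemma~\ref{lem:b-e-conditional} holds because $nt\leq c_0$ is chosen small), apply that lemma, and take expectations; combined with the bound $\Exp(M_n^2)\leq C/n^2$ from Lemma~\ref{lem:M-moments} this yields
\begin{align*}
\Pr\!\left(\frac{N_t-\Exp N_t}{\sqrt{v(t)}}\leq x\right)
=\Exp\,\Phi\!\left(\frac{x\sqrt{v(t)}-R_{n,t}}{\sqrt{v(t)-S_{n,t}}}\right)+\rho_{n,t}(x),
\end{align*}
with $\sup_x|\rho_{n,t}(x)|\leq C/(n^2 t^{3/2})$, supplying the first term of~\eqref{eq:clt-with-error}.

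Next, set $\xi:=R_{n,t}/\sqrt{v(t)}$ and $\eta:=S_{n,t}/v(t)$ and write $g(\xi,\eta):=\Phi\bigl((x-\xi)/\sqrt{1-\eta}\bigr)$; note $\eta\leq 1/2$ by Lemma~\ref{lem:S-bound}. I would Taylor-expand $g$ at $(0,0)$, assigning $\xi$ weight $1$ and $\eta$ weight $2$ (which matches the scaling $\Exp\eta=\Exp\xi^2$ from~\eqref{eq:R-S} and the growth in Proposition~\ref{prop:R-S-cross-moments}). A direct induction using $\varphi^{(m)}(x)=(-1)^m\mathrm{He}_m(x)\varphi(x)$, where $\mathrm{He}_m$ is the probabilist's Hermite polynomial and $\varphi=\Phi'$, together with repeated chain-rule differentiation of $g$, establishes the compact identity
\[
\partial_\xi^{\ell-2k+1}\partial_\eta^{k}g(0,0)=\frac{(-1)^{k+1}}{2^k}\,\mathrm{He}_\ell(x)\varphi(x),\quad 0\leq k\leq\lfloor(\ell+1)/2\rfloor.
\]
Collecting the weight-$(\ell+1)$ terms in the Taylor polynomial and using $\xi^{\ell-2k+1}\eta^k=v(t)^{-(\ell+1)/2}R_{n,t}^{\ell-2k+1}S_{n,t}^k$, one obtains exactly $\mathrm{He}_\ell(x)\varphi(x)\cdot\cE^\ell_{n,t}$ by the definition~\eqref{eq:E-def}. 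Taking expectations, a supremum in $x$, and applying the Fourier bound
\[
\sup_x\bigl|\mathrm{He}_\ell(x)\varphi(x)\bigr|=\sup_x\bigl|\varphi^{(\ell)}(x)\bigr|\leq\frac{1}{2\pi}\!\int_{\R}|\xi|^\ell \re^{-\xi^2/2}\,\ud\xi\leq 2^{\ell/2}\Gamma\bigl(\tfrac{\ell+1}{2}\bigr),
\]
coming from Fourier inversion and $\Exp|Z|^\ell=2^{\ell/2}\Gamma((\ell+1)/2)/\sqrt{\pi}$ for $Z\sim N(0,1)$, reproduces the sum on the right of~\eqref{eq:clt-with-error} with $L=2\lfloor(\delta/3)\log n\rfloor+1$.

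The hard part is controlling the Taylor remainder at weighted degree $L+2$. Although $\eta\leq 1/2$ always, $\eta$ can be large and negative when $K_{n,2t}$ is atypical; however, Lemma~\ref{lem:small-gaps-K}\ref{lem:small-gaps-K-iii} ensures $K_{n,2t}\leq Cn^2 t$ except on an event of probability $O(\re^{-cn^2t})$, so $|\eta|\leq Cn^2 t^2=O(c_0\cdot nt)\ll 1$ typically. I would therefore split the expectation on $\{|\eta|\leq 1/4\}$: on this event all partial derivatives of $g$ up to order $L+2$ admit explicit polynomial-in-$L$ bounds, so the remainder is dominated by $\sum_{a+2b\geq L+2}|\Exp R_{n,t}^{a}S_{n,t}^{b}|\,v(t)^{-(a+2b)/2}/(a!b!)$, which Proposition~\ref{prop:R-S-cross-moments} shows to form a geometric tail in $(n^2 t)^{-1/2}$ once $L\asymp\log n$ (using $n t\geq n^{1-\nu}$ for the hypotheses of that proposition); on the complement the trivial bound $|g(\xi,\eta)-\Phi(x)|\leq 1$ combined with the tail estimate contributes only a super-polynomially small amount. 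The most delicate bookkeeping is to match the $(\tfrac{p+1}{2})!$ factorials from~\eqref{eq:R-S-p-even-bound}--\eqref{eq:R-S-p-odd-bound} against the $1/(a!b!)$ factors from Taylor, which is precisely what fixes the allowable cut-off $L\sim\delta\log n$ via~$\delta$ from Proposition~\ref{prop:R-S-cross-moments}.
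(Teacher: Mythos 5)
Your plan matches the paper in broad outline (condition on $\cF_n$, apply the conditional Berry--Esseen Lemma~\ref{lem:b-e-conditional}, take expectations to get the $C/(n^2 t^{3/2})$ term, expand $\Phi\bigl((x-\xi)/\sqrt{1-\eta}\bigr)$, and identify the expansion coefficients with $\cE^\ell_{n,t}$), and your computation of the Taylor coefficients and their match to~\eqref{eq:E-def} is correct.  However, the route you take to the expansion --- a direct bivariate weighted Taylor expansion of $g(\xi,\eta)=\Phi\bigl((x-\xi)/\sqrt{1-\eta}\bigr)$ --- diverges substantially from what the paper does, and its remainder control has genuine gaps.

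The paper does not Taylor-expand $g$ directly.  Instead it proves Lemma~\ref{lem:new-edgeworth} by working with the characteristic function of $Y\sim\cN(-z,1-\alpha)$, writing $\varphi_Y(t)=\re^{-t^2/2}\exp(g_{z,\alpha}(t))$ with $g_{z,\alpha}(t)=\alpha t^2/2-izt$, and truncating the single-variable exponential series there.  Because the truncation error of $\exp$ is a one-term bound $|\xi|^{m+1}\re^{\Re\xi}/(m+1)!$, the eventual remainder is the single shell $2^{2m}\bigl(|\alpha|^{m+1}+\Gamma(\tfrac{m+1}{2})|z|^{m+1}/(m+1)!\bigr)$, and crucially this holds for \emph{all} $\alpha\le 1/2$, so no event-splitting on $\{|\eta|\le 1/4\}$ (nor Lemma~\ref{lem:small-gaps-K}\ref{lem:small-gaps-K-iii}) is needed.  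The moment asymptotics of Proposition~\ref{prop:R-S-cross-moments} are then invoked only at the single orders $m+1$ and $m+2$.

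Your direct-Taylor remainder, by contrast, is the tail of an infinite double series over all $(a,b)$ with $a+2b\ge L+2$, not a single shell.  This creates problems you do not resolve.  First, after taking expectations the remainder term for each $(a,b)$ is controlled by $\Exp[|\xi|^a|\eta|^b]$ times a sup of a partial derivative of $g$, not by $|\Exp R_{n,t}^a S_{n,t}^b|$ as you write; the crucial cancellation encoded in the signed moments (which makes $|\Exp(R^pS^q)|\lesssim(n^2t)^{q+p/2}$ rather than $(n^2t)^{p+q}$) is not available for the remainder, only for the main term.  Second, Proposition~\ref{prop:R-S-cross-moments} only applies for $p+q\le\delta\log n$, whereas the tail of your double series extends to arbitrarily high degree; beyond that range you would have to fall back on crude bounds such as Lemma~\ref{lem:small-gaps-K}\ref{lem:small-gaps-K-i}, and once the power of $n$ in $(n^2t^{3/2})^a$ is pitted against the factorial $a!$ and the Hermite-polynomial growth $\sup_x|\phi(x)H_{a+2b-1}(x)|\sim 2^{(a+2b)/2}\Gamma((a+2b)/2)$, the tail does not obviously converge.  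Your claim that the tail forms a geometric series in $(n^2t)^{-1/2}$ is therefore unjustified as stated, and fixing it would require either an explicit integral form of the remainder or a different organization of the expansion --- which is precisely what Lemma~\ref{lem:new-edgeworth} supplies.  In short: the expansion coefficients you computed are correct, but the approach to the remainder is a different (and, as written, incomplete) argument; you should prove something like Lemma~\ref{lem:new-edgeworth} via Fourier inversion before attempting to unconditioning the conditional Berry--Esseen bound.
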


The main additional element to Proposition~\ref{prop:clt-with-error} is a sort of Hermite--Edgeworth expansion. Denote by $H_n$ the Hermite polynomial of degree~$n \in \ZP$, which satisfies
\begin{equation}\label{eq:hermite}   \phi^{(n)} (x) := \frac{\ud^n}{\ud x^n} \phi (x) = (-1)^n H_n (x) \phi (x), \text{ for } x \in \R ,\end{equation}
where $\phi (x) := ( 2\pi)^{-1/2} \re^{-x^2/2}$
is the standard Gaussian density: see e.g.~\cite[\S20.2]{schilling}.
We will need the following inequality from~\cite[p.~78]{lukacs}:
\begin{equation}
    \label{eq:cramer}
    \sup_{x \in \R} \left| \phi (x) H_n (x) \right| \leq 2^{n/2} \Gamma \bigl( \tfrac{n+1}{2} \bigr) , \text{ for every } n \in \N;
\end{equation}
note that $|H_{2n} (0)| = 2^{-n} (2n)! / n!$ shows this bound is not far from optimal.

\begin{lemma}\label{lem:new-edgeworth}
Let $m \in \N$. Then, for
all $z \in \R$ and all $\alpha \in (-\infty,1/2]$,
\begin{align*}
& \sup_{x \in \R} \left| \Phi \left( \frac{x+z}{\sqrt{1-\alpha}} \right) - \Phi(x) - \phi(x)\sum_{\ell=0}^{2m-1} \sum_{k=0}^{\lfloor (\ell+1)/2 \rfloor} \frac{(-1)^{\ell-k} z^{\ell+1-2k}}{k! (\ell+1-2k)!}  \left(\frac{\alpha}{2}\right)^k H_{\ell} (x) \right| \\
& {} \qquad\hspace{8cm} {} \leq 2^{2m} \left( |\alpha|^{m+1} + \frac{\Gamma ( \frac{m+1}{2} )}{(m+1)!}  |z|^{m+1} \right) .
\end{align*}
\end{lemma}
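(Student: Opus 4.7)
The stated polynomial is recognizable as the Taylor expansion of $F(z,\alpha) := \Phi((x+z)/\sqrt{1-\alpha}) = \Phi_{1-\alpha}(x+z)$ about $(z,\alpha) = (0,0)$, truncated so that only terms with the \emph{weighted degree} $j + 2k \in \{1,\dots,2m\}$ survive (here $j,k$ are the exponents of $z,\alpha$). So the plan is (i) verify this identification, then (ii) bound the remainder. The identification rests on the ``backward heat equation'' satisfied by $F$: a direct computation of derivatives of the Gaussian density yields $\partial_{\sigma^2}\phi_{\sigma^2}(y) = \tfrac12\partial_y^2\phi_{\sigma^2}(y)$, and integrating gives $\partial_{\sigma^2}\Phi_{\sigma^2}(y) = \tfrac12 \partial_y\phi_{\sigma^2}(y)$, whence $\partial_\alpha F = -\tfrac12 \partial_z^2 F$. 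Iterating, $\partial_z^j\partial_\alpha^k F\bigl|_{(0,0)} = (-1/2)^k \phi^{(j+2k-1)}(x)$ for $(j,k)\neq(0,0)$. Using $\phi^{(n)}(x) = (-1)^nH_n(x)\phi(x)$ and the change of variables $\ell = j+2k-1$, the stated sum matches this Taylor polynomial term-by-term (with careful bookkeeping of the signs $(-1)^{\ell-k}$ vs.\ the combination $(-1/2)^k \cdot (-1)^{j+2k-1}$).

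The remainder bound I would establish in two stages. \textbf{Stage 1 ($\alpha$-expansion).} Taylor's theorem with integral remainder gives
\[
F(z,\alpha) = \sum_{k=0}^m \frac{(-\alpha/2)^k}{k!}\,\partial_z^{2k}F(z,0) + R^{(\alpha)}_m, \qquad R^{(\alpha)}_m = \frac{(-\alpha/2)^{m+1}}{m!}\int_0^1(1-t)^m\,\phi^{(2m+1)}_{1-t\alpha}(x+z)\,dt.
\]
Since $\phi^{(n)}_{\sigma^2}(y) = \sigma^{-(n+1)}\phi^{(n)}(y/\sigma)$ and since $\sigma^2 = 1-t\alpha \ge \tfrac12$ for $\alpha \le \tfrac12$, the Hermite-polynomial bound~\eqref{eq:cramer} applied to $\phi^{(2m+1)}$ gives $\sup_{y}|\phi^{(2m+1)}_{1-t\alpha}(y)| \le 2^{2m+3/2}m!$, so after the $t$-integration $\sup_{x,z}|R^{(\alpha)}_m| \le \frac{2^{m+1/2}}{m+1}|\alpha|^{m+1} \le 2^{2m}|\alpha|^{m+1}$. \textbf{Stage 2 ($z$-expansion of each term).} For each $k\in\{0,1,\dots,m\}$, Taylor-expand the $z$-dependent piece $\phi^{(2k-1)}(x+z)$ (interpreted as $\Phi(x+z)$ when $k=0$) about $z=0$ to the order which saturates the weight budget $j+2k\leq 2m$. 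This produces exactly the polynomial claimed in the lemma; the Taylor remainders are again bounded using~\eqref{eq:cramer}, noting that $\sup_y|H_n(y)\phi(y)| \le 2^{n/2}\Gamma((n+1)/2)$, and the combination $\Gamma((m+1)/2)/(m+1)!$ appearing in the stated bound arises from the dominant $k=0$ remainder.

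\textbf{Main obstacle.} The main difficulty is the combinatorial bookkeeping — keeping the signs from $(-1/2)^k$, from $\phi^{(n)} = (-1)^nH_n\phi$, and from the coefficient $(-1)^{\ell-k}$ all aligned when re-indexing by $\ell = j+2k-1$ — together with the careful choice of truncation order in $z$ for each fixed $k$ so that the remainders combine to give a bound of the neat form $2^{2m}(|\alpha|^{m+1} + \frac{\Gamma((m+1)/2)}{(m+1)!}|z|^{m+1})$. In particular, the trade-off between the $|\alpha|^{m+1}$ and $|z|^{m+1}$ parts of the error requires packaging the mixed $|\alpha|^k|z|^{2m+1-2k}$ remainders from Stage 2 appropriately (which is clean in the regime $|z|,|\alpha| \lesssim 1$ that matters for the application in Proposition~\ref{prop:clt-with-error}, where $z,\alpha$ play the roles of suitably normalized versions of $R_{n,t}$ and $S_{n,t}$). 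Once this packaging is done, the large prefactor $2^{2m}$ easily absorbs the constants coming from $(1-\alpha)^{-n/2} \le 2^{n/2}$, from~\eqref{eq:cramer}, and from the coefficients $\frac{1}{k!2^k}$.
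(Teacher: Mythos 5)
Your route is genuinely different from the paper's: you expand $\Phi\bigl((x+z)/\sqrt{1-\alpha}\bigr)$ directly in $(z,\alpha)$ using the backward heat equation $\partial_\alpha F = -\tfrac12\partial_z^2 F$, whereas the paper passes to the characteristic function $\varphi_Y(t) = \re^{-t^2/2}\exp(g_{z,\alpha}(t))$ with $g_{z,\alpha}(t)=\tfrac{\alpha t^2}{2}-izt$, Taylor-expands $\re^{\xi}$ in $\xi=g_{z,\alpha}(t)$, and inverts via the standard smoothing integral, using $\tfrac{1}{2\pi}\int_{-\infty}^{\infty}\re^{-itx}\re^{-t^2/2}(-it)^n\,\ud t = (-1)^n\phi(x)H_n(x)$. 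Your identification of the sum as a truncated bivariate Taylor polynomial is a nice observation and the heat-equation bookkeeping is correct.

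However, there is a genuine gap in your remainder analysis that cannot be dismissed as ``packaging.'' Your Stage~2 remainder for the $k$-th term is of magnitude $|z|^{2m-2k+1}$; the dominant $k=0$ piece is $\sim |z|^{2m+1}$, and for no absolute constant $C$ does $|z|^{2m+1}\le C\,|z|^{m+1}$ hold for all $z\in\R$. So the decomposition you describe cannot, as stated, yield a bound of the form $2^{2m}\bigl(|\alpha|^{m+1}+\tfrac{\Gamma((m+1)/2)}{(m+1)!}|z|^{m+1}\bigr)$ uniformly in $z$; you would only get cross-terms $|\alpha|^k|z|^{2m+1-2k}$, which match the claimed form only when one restricts to $|z|\lesssim 1$. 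The paper avoids this entirely: because $g_{z,\alpha}(t)$ is \emph{degree one} in $(\alpha,z)$, the single remainder $\tfrac{|\xi|^{m+1}}{(m+1)!}\re^{\Re\xi}$ already puts $\alpha$ and $z$ at the same power $m+1$, and the subsequent Gaussian $t$-integration produces the factors $|\alpha|^{m+1}$ and $|z|^{m+1}$ cleanly. (Note also that the paper's proof truncates the characteristic-function expansion at $j\le m$, which corresponds to ordinary degree $j_z+k_\alpha\le m$, not the weighted degree $j_z+2k_\alpha\le 2m$ that you identified from the statement; this discrepancy is part of why your remainder has the wrong homogeneity.) If you want to stay in the spatial domain, you should instead do a single bivariate Taylor expansion of $F$ to ordinary degree $m$ and then apply Young's inequality to the degree-$(m+1)$ cross terms $|z|^j|\alpha|^k$; that would deliver the correct powers of $|\alpha|$ and $|z|$, but it would also require matching the resulting degree-$\le m$ polynomial to the one in the statement, which is not a priori the same.
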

\begin{proof}
For $z\in\R$ and $\alpha \in (-\infty,1/2]$, let $Y\sim\cN(-z,1-\alpha)$; then
\[ \Pr ( Y \leq x ) = \Phi \left( \frac{x+z}{\sqrt{1-\alpha}} \right) , \text{ for all } x \in \R. \]
Let $\Re \xi$ 
denote the real part
of  $\xi \in \mathbb{C}$.
The random variable $Y$ has characteristic function 
\begin{align*}
\varphi_Y(t) = \re^{-izt-(1-\alpha)(t^2/2)}
 = \re^{-t^2/2} \exp (  g_{z,\alpha} (t) ),
 \text{ where } g_{z,\alpha} (t) := \frac{\alpha t^2}{2}-izt. \end{align*}
Using the Taylor series for the exponential function with complex argument
\[
\left| \re^{\xi} - \sum_{j=0}^m \frac{\xi^j}{j!} \right| \leq \frac{|\xi|^{m+1}}{(m+1)!} \re^{\Re \xi}, \text{ for } \xi \in \mathbb{C} \text{ and } m \in \ZP, 
\]
we obtain, for $t \in \R$,
\begin{equation}
\label{eq:taylor}
   \left| \varphi_Y(t) - \re^{-t^2/2}   \sum_{j=0}^m \frac{g_{z,\alpha} (t)^j}{j!} \right|
   \leq \frac{|\alpha t^2|^{m+1} + 2^{m+1} |z t |^{m+1} }{(m+1)!}  \re^{-(1-\alpha)t^2/2} .
\end{equation}
A standard inversion formula (see e.g.~Theorem~3.2.1 of~\cite[p.\ 31]{lukacs}) gives
\begin{align*}
    \Pr ( Y \leq x ) & = \frac{1}{2} + \frac{i}{2\pi} \int_{-\infty}^\infty  \bigl( \re^{-itx} \varphi_Y (t) \bigr) \frac{\ud t}{t} , \text{ for every } x \in \R. 
\end{align*}
Using the same formula for the standard Gaussian characteristic function shows that
\begin{align*}
   \frac{1}{2} +  \frac{i}{2\pi}  \sum_{j=0}^m \frac{1}{j!}  \int_{-\infty}^\infty    \re^{-itx} \re^{-t^2/2} g_{z,\alpha} (t)^j   \frac{\ud t}{t}
    & = \Phi (x ) + \frac{i}{2\pi}  \sum_{j=1}^m \frac{1}{j!}  \int_{-\infty}^\infty  \re^{-itx} \re^{-t^2/2} g_{z,\alpha} (t)^j   \frac{\ud t}{t}.
\end{align*}
Hence, by~\eqref{eq:taylor},
\begin{align*}
 & \sup_{x \in \R} \left|  \Pr ( Y \leq x ) - \Phi (x) - \frac{i}{2\pi}  \sum_{j=1}^m \frac{1}{j!} \int_{-\infty}^\infty   \re^{-itx} \re^{-t^2/2} g_{z,\alpha} (t)^j  \frac{\ud t}{t} \right| \\
 & {} \qquad{}  \leq \frac{1}{\pi (m+1)!} \int_{0}^\infty  \bigl( | \alpha|^{m+1} t^{2m+1} + 2^{m+1} |z|^{m+1} t^m \bigr)  \re^{-(1-\alpha)t^2/2} \ud t \\
 & {} \qquad{} =  \frac{(1-\alpha)^{-(m+1)/2}}{\pi (m+1)!} \left( 2^{m}  \alpha^{m+1 } (1-\alpha)^{-(m+1)/2} \Gamma ( m+1) 
 + |z|^{m+1} 2^{(3m+1)/2}  \Gamma ( \tfrac{m+1}{2} ) 
 \right) \\
 & {} \qquad{} \leq 2^{2m} \left(  {|\alpha|^{m+1}}  +  \frac{\Gamma ( \frac{m+1}{2} )}{(m+1)!} {|z|^{m+1}}  \right) ,
\end{align*}
provided $\alpha \in (-\infty,1/2]$.
Now, using the binomial theorem to expand $g_{z,\alpha}(t)^j$, 
\begin{align*}
     \int_{-\infty}^\infty     \re^{-itx} \re^{-t^2/2} g_{z,\alpha} (t)^j  \frac{\ud t}{t} 
     = \sum_{k=0}^j \binom{j}{k} z^{j-k}    \left( \frac{\alpha}{2} \right)^k 
      \int_{-\infty}^\infty  \re^{-itx} \re^{-t^2/2} (-i)^{j-k} t^{j+k}  \frac{\ud t}{t}.
\end{align*}
For $n \in \ZP$ we have the equality
\begin{align*}
\frac{1}{2\pi}\int_{-\infty}^\infty \re^{-itx}\re^{-t^2/2}(-it)^n \ud t &= (-1)^n \phi(x)H_n(x) , \text{ for all } x \in \R.
\end{align*}
Hence,
\begin{align*}
\int_{-\infty}^\infty   \re^{-itx} \re^{-t^2/2} (-i)^{j-k} t^{j+k} \frac{\ud t}{t}
    &= 2\pi i (-1)^{k+1} \int_0^\infty     \re^{-itx} \re^{-t^2/2} (-i t)^{j+k-1}   \ud t\\
    & = 2\pi i (-1)^{j} \phi (x) H_{j+k-1} (x) .
\end{align*}
So we obtain
\begin{align*}
\frac{i}{2 \pi}  \sum_{j=1}^m\frac{1}{j!} \int_{-\infty}^\infty   \re^{-itx} \re^{-t^2/2} g_{z,\alpha} (t)^j  \frac{\ud t}{t} 
&   =    \sum_{j=1}^m \frac{(-1)^{j+1}}{j!}  
\sum_{k=0}^j\binom{j}{k}z^{j-k}\left(\frac{\alpha}{2}\right)^k
\phi (x) H_{j+k-1} (x) .
\end{align*}
Thus we conclude that
\begin{align*}
& \sup_{x \in \R} \left| \Pr (Y\leq x) - \Phi(x) - \phi(x)\sum_{j=1}^m\frac{(-1)^{j+1}}{j!}\sum_{k=0}^j\binom{j}{k}z^{j-k}\left(\frac{\alpha}{2}\right)^kH_{j+k-1}(x) \right| \\
& {} \qquad\hspace{8cm} {} \leq 2^{2m}  \left( |\alpha|^{m+1} + \frac{\Gamma ( \frac{m+1}{2} )}{(m+1)!}  |z|^{m+1} \right) .
\end{align*}
The stated result now follows by re-expressing the sums over $k$ and $\ell = j+k-1$.
\end{proof}

 \begin{proof}[Proof of Proposition~\ref{prop:clt-with-error}]
Let $n \in \N$ and $t \in (0,\frac{1}{4(n+1)})$. Recall $R_{n,t}$ and $V_{n,t}$ from~\eqref{eq:R-def} and~\eqref{eq:V-def}. We apply the conditional Berry--Esseen result, Lemma~\ref{lem:b-e-conditional}, which has random centering and scaling, to obtain, for deterministic centering and scaling, 
\begin{align*}
\Pr \left( \frac{N_t-\Exp N_t}{\sqrt{ v(t)}} \leq x  \biggmid \cF_n \right)
  &= \Pr  \left( \frac{N_t-\Exp N_t - R_{n,t}}{\sqrt{V_{n,t}}} \leq 
\sqrt{\frac{v(t)}{V_{n,t}}}\left(x  - \frac{ R_{n,t}}{\sqrt{ v(t)}}\right)
\biggmid \cF_n \right) \\
&=\Phi \left(\sqrt{ \frac{v(t)}{V_{n,t}}}\left(x  - \frac{ R_{n,t}}{\sqrt{ v(t)}}\right) \right) +  \Delta_{n,t} (x) , \text{ for all } x \in \R,
\end{align*}
for $\cF_n$-measurable random variables $\Delta_{n,t} (x)$ satisfying 
$\sup_{x \in \R} | \Delta_{n,t} (x) | \leq C M_n^2 t^{-3/2}$, a.s., by \eqref{eq:b-e-conditional}. 
Recall  
that $\Var N_t = v(t) = s_0 / t$ for $t \in (0,1/2)$, by~\eqref{eq:var-t}. 
Define
\begin{equation}
    \label{eq:Z-alpha-def}
    \alpha_{n,t} := \frac{S_{n,t}}{v(t)}, \text{ and } Z_{n,t} := - \frac{R_{n,t}}{\sqrt{v(t)}} ,
\end{equation}
where $S_{n,t}$ is defined at~\eqref{eq:S-def}. 
Thus we can re-write the preceding bound as 
\begin{align}
\label{eq:b-e-conditional-5}
\sup_{x \in \R} \left| \Pr \left( \frac{N_t-\Exp N_t}{\sqrt{ v(t)}} \leq x  \biggmid \cF_n \right)
- \Phi \left( \frac{x+Z_{n,t}}{\sqrt{1-\alpha_{n,t}}} \right) \right| \leq \frac{C M_n^2}{t^{3/2}}, \as
\end{align}
Note that, by~\eqref{eq:Z-alpha-def} with the first inequality in~\eqref{eq:S-K-bound} and our choice $t < \frac{1}{4(n+1)}$, we have $\alpha_{n,t} = S_{n,t}/v(t) \leq 1/2$. 
Moreover, for $\ell \in \ZP$ we may express $\cE^\ell_{n,t}$ as defined at~\eqref{eq:E-def} in terms of $\alpha_{n,t}$ and $Z_{n,t}$ defined at~\eqref{eq:Z-alpha-def} via 
\begin{equation}
    \label{eq:E-alpha-Z}
 \cE^\ell _{n,t}   = \sum_{k=0}^{\lfloor(\ell+1)/2\rfloor}\frac{(-1)^{\ell-k} }{2^k k! (\ell-2k+1)!}  Z_{n,t}^{\ell-2k+1} \alpha_{n,t}^k. \end{equation}
 Then we apply  Lemma \ref{lem:new-edgeworth} with $\alpha = \alpha_{n,t} \in (-\infty, 1/2]$ and $z = Z_{n,t} \in \R$ to give 
\begin{align*}
 \left|  \Phi \left( \frac{x+Z_{n,t}}{\sqrt{1-\alpha_{n,t}}} \right) 
- \Phi(x) - 
\phi(x)\sum_{\ell=0}^{2m-1} H_\ell (x) \cE^\ell_{n,t} \right| 
\leq 2^{2m} \left( |\alpha_{n,t}|^{m+1} + \frac{\Gamma ( \frac{m+1}{2} )}{(m+1)!} |Z_{n,t}|^{m+1} \right). 
\end{align*}
 Taking expectations in the above display
 and using the bound~\eqref{eq:cramer}, we obtain
\begin{align*}
\sup_{x \in \R} \left| \Exp  \Phi \left( \frac{x+Z_{n,t}}{\sqrt{1-\alpha_{n,t}}} \right)   
- \Phi(x) \right| & \leq  \sum_{\ell=0}^{2m+1} 2^{\frac{\ell}{2}} \Gamma \bigl( \tfrac{\ell+1}{2} \bigr)  \left| \Exp \cE^\ell_{n,t}   \right| \\
& {} \quad + 2^{2m} \left( \Exp ( |\alpha_{n,t}|^{m+1} ) + \frac{\Gamma ( \frac{m+1}{2} )}{(m+1)!} \Exp ( |Z_{n,t} |^{m+1} ) \right).
\end{align*}
Hence we take expectations in~\eqref{eq:b-e-conditional-5}, noting that $\Exp ( M_n^2 ) = O (n^{-2})$ by Lemma~\ref{lem:M-moments}, to get
\begin{align}
\label{eq:main-series}
  \sup_{x \in \R} \left| \Pr \left( \frac{N_t -\Exp N_t}{\sqrt { \Var N_t}} \leq x  \right) - \Phi (x) \right| & \leq 
  \frac{C}{n^2 t^{3/2}} +   \sum_{\ell=0}^{2m+1} 2^{\frac{\ell}{2}} \Gamma \bigl( \tfrac{\ell+1}{2} \bigr)  \left| \Exp \cE^\ell_{n,t}   \right| \nonumber\\
& {}  \quad {} + 2^{2m}  \left( \Exp ( |\alpha_{n,t}|^{m+1} ) + \frac{\Gamma ( \frac{m+1}{2} )}{(m+1)!} \Exp ( |Z_{n,t} |^{m+1} ) \right).
  \end{align}
  Now suppose $t \in ( n^{-\nu}, \frac{1}{40(n+1)} )$, and take $\delta >0$ as specified in Proposition~\ref{prop:R-S-cross-moments}. 
    From Proposition~\ref{prop:R-S-cross-moments}\ref{prop:R-S-cross-moments-i} with $p=0$ and $q=m+1 \leq \delta \log n$, we have
    \[ \Exp ( | \alpha_{n,t} |^{m+1} ) \leq C t^{m+1} \Exp ( | S_{n,t} |^{m+1} ) 
    \leq C t^{m+1} \bigl[ (n^2 t)^{m+1} + C (40 n^2 t)^{m+\frac{1}{2}} \bigr] \leq C (40 n t)^{2m+2} ,
    \]
    since $n t < 1$. On the other hand, $\Exp ( |Z_{n,t}|^{m+1} ) \leq C t^{\frac{m+1}{2}} \Exp ( | R_{n,t} |^{m+1} )$. For $m -1 \in 2\ZP$, we have from Proposition~\ref{prop:R-S-cross-moments}\ref{prop:R-S-cross-moments-i} with $p=m+1 \leq \delta n$ and $q=0$,
    \[ \Exp ( | R_{n,t}|^{m+1} ) \leq \frac{ (m+1)!}{(\frac{m+1}{2} )!} (n^2 t)^{\frac{m+1}{2}} + ( \tfrac{m+3}{2} )! (40 n^2 t)^{\frac{m}{2}} . \]
    Using Stirling's formula, $\Gamma ( x) \sim \sqrt{ 2\pi x} ( x/\re )^x$, and the fact that $n t<1$,
    it follows that 
    \[ \frac{\Gamma ( \frac{m+1}{2} )}{(m+1)!} \Exp ( |Z_{n,t} |^{m+1} ) \leq C (40 n t)^{m} . \]
    For $m \in 2\ZP$, using Proposition~\ref{prop:R-S-cross-moments}\ref{prop:R-S-cross-moments-i} (again) with $p=m+2 \leq \delta n$ and $q=0$, plus Jensen's inequality, leads to the same conclusion. 
  Thus from~\eqref{eq:main-series} we get
  \begin{align}
\label{eq:main-series-2}
  \sup_{x \in \R} \left| \Pr \left( \frac{N_t -\Exp N_t}{\sqrt { \Var N_t}} \leq x  \right) - \Phi (x) \right| & \leq 
  \frac{C}{n^2 t^{3/2}} +   \sum_{\ell=0}^{2m+1} 2^{\frac{\ell}{2}} \Gamma \bigl( \tfrac{\ell+1}{2} \bigr)  \left| \Exp \cE^\ell_{n,t}   \right| 
  + C (80 n t)^m,
  \end{align}
  provided $m \leq \delta \log n$. Taking $m = \lfloor (\delta/3)  \log n \rfloor$ and $n t \leq c < 1/80$, we have $ (80 n t)^m \leq (80c)^{(\delta/4) \log n}$ for all $n$ large enough; thus provided $c \leq c_0 := \re^{-2/\delta}/80$ we have that $(80 nt)^m \leq C n^{-1/2}$. Then from~\eqref{eq:main-series-2} we conclude~\eqref{eq:clt-with-error}.
   \end{proof} 

With   Proposition~\ref{prop:clt-with-error} in hand, the remaining task in the proof of Theorem~\ref{thm:n-b-e} is to bound $\Exp  \cE^{\ell}_{n,t} $ from~\eqref{eq:E-def} and hence control  the sum on the right-hand side of~\eqref{eq:clt-with-error}. This is the purpose of the next result, which needs the full strength of Proposition~\ref{prop:R-S-cross-moments}. 

\begin{lemma}
    \label{lem:error-estimates}
    Suppose that $\nu \in (1, \frac{3}{2})$, and let $\delta >0$ be as in Proposition~\ref{prop:clt-with-error}.  
    \begin{thmenumi}[label=(\roman*)]
\item
\label{lem:error-estimates-i}
There exists  $C < \infty$ such that, for all $n \in \N$,
all $t \in ( n^{-\nu}, \frac{1}{40(n+1)} )$, and all $\ell -1  \in 2\ZP$ 
with $1 \leq \ell \leq (\delta /2) \log n$, 
\begin{equation}
    \label{eq:E-bound-odd}
\bigl| \Exp \cE^\ell _{n,t} \bigr| \leq 
    \frac{C}{(\frac{\ell+1}{2})!} \left( \frac{80n^2 t^2}{s_0} \right)^{\frac{\ell+1}{2}}
 (n^2 t)^{-1/2}.
\end{equation}  
\item
\label{lem:error-estimates-ii}
It holds that $\Exp \cE^0_{n,t} = 0$. Moreover, there exists  $C < \infty$ such that, for all $n \in \N$,
all $t \in ( n^{-\nu}, \frac{1}{40(n+1)} )$, and all $\ell  \in 2\ZP$ 
with $1 \leq \ell \leq ( \delta/2) \log n$, 
\begin{equation}
    \label{eq:E-bound-even}
\bigl| \Exp \cE^\ell _{n,t}  \bigr|  \leq 
    \frac{C}{( \frac{\ell}{2} )!} \left( \frac{80n^2 t^2}{s_0} \right)^{\frac{\ell}{2}}  n^{-1}. 
\end{equation}
\end{thmenumi}
    \end{lemma}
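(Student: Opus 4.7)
The plan is to apply Proposition~\ref{prop:R-S-cross-moments} termwise to the sum over $k$ in~\eqref{eq:E-def}. For each summand $\Exp(R_{n,t}^{\ell-2k+1}S_{n,t}^k)$ the parity of the $R$-exponent, and hence of $\ell$, determines which part of the proposition applies: part~\ref{prop:R-S-cross-moments-i} when $\ell$ is odd (so $\ell-2k+1$ is even), and part~\ref{prop:R-S-cross-moments-ii} when $\ell$ is even. In both cases the proposition gives an explicit leading term of order $(n^2 t/6)^{\lfloor(\ell+1)/2\rfloor}$ together with a quantitatively controlled error. The heart of the argument is that these leading contributions sum to zero (or almost so) after summation over~$k$, via a binomial identity; the claimed bounds then emerge from summing the residual error terms.

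For part~\ref{lem:error-estimates-i}, write $\ell=2j-1$ and $p=2(j-k)$; the leading term from Proposition~\ref{prop:R-S-cross-moments}\ref{prop:R-S-cross-moments-i} is $\frac{p!}{2^{p/2}(p/2)!}(n^2 t/6)^j$, and after multiplication by the coefficient $\frac{(-1)^{k+1}}{2^k k!\, p!}$ from~\eqref{eq:E-def} this reduces cleanly to $\frac{(-1)^{k+1}}{2^j j!}\binom{j}{k}$ times the common factor $v(t)^{-j}(n^2 t/6)^j$. Summation over $k=0,\ldots,j$ then yields $-(1-1)^j/(2^j j!)=0$ for $j\geq 1$, so the leading cancels. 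The residual is a sum of error contributions; bounding each by Proposition~\ref{prop:R-S-cross-moments}\ref{prop:R-S-cross-moments-i}, using $(2m)!\geq(m!)^2$ to bound $\frac{(j-k+1)!}{(2(j-k))!}\leq\frac{j-k+1}{(j-k)!}$, and the elementary estimate $\sum_k\binom{j}{k}(j-k+1)2^{-k}\lesssim(j+1)(3/2)^j$, then gives~\eqref{eq:E-bound-odd}, with the $(j+1)$ overhead absorbed into the exponential slack $(4/3)^j=(80/60)^j$.

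For part~\ref{lem:error-estimates-ii}, the case $\ell=0$ is immediate from $\cE^0_{n,t}=-R_{n,t}/\sqrt{v(t)}$ and $\Exp R_{n,t}=0$ (Lemma~\ref{lem:R-S-algebra}). For $\ell=2j\geq 2$ the $R$-exponent $2j-2k+1$ is odd; the $k=0$ summand contributes $\Exp R_{n,t}^{2j+1}=0$ by Lemma~\ref{lem:K-R-W-moments}\ref{lem:K-R-W-moments-i}, and the $k\geq 1$ terms are treated by Proposition~\ref{prop:R-S-cross-moments}\ref{prop:R-S-cross-moments-ii}, whose leading value carries the extra factor $q=k$. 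A parallel algebraic reduction, using the reindexing $k'=k-1$, collapses the sum of leading terms to $(1-1)^{j-1}/(j-1)!$, which vanishes whenever $j\geq 2$. Thus for $\ell\geq 4$ the same error-summation scheme as in part~(i) delivers~\eqref{eq:E-bound-even}. The unique remaining case $\ell=2$ (where the leading contribution survives) is handled by direct estimation: Proposition~\ref{prop:R-S-cross-moments}\ref{prop:R-S-cross-moments-ii} applied to $\Exp R_{n,t} S_{n,t}$, together with $\Exp R_{n,t}^3=0$, controls $|\Exp \cE^2_{n,t}|$ at a level that, under the hypothesis $t\leq 1/(40(n+1))$, is absorbed into the right-hand side of~\eqref{eq:E-bound-even} after enlarging~$C$.

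The main technical difficulty is combinatorial bookkeeping: tracking the factorials, binomial coefficients, and explicit numerical bases $6$, $40$, $80$ through the two-layer expansion, and verifying that polynomial factors in~$j$ together with geometric factors like $(3/2)^j$ (arising from the binomial sums in the error estimate) can be absorbed uniformly for $j\leq(\delta/2)\log n$ into the stated right-hand sides. The exceptional case $\ell=2$, in which no leading-term cancellation occurs, is conceptually the most delicate point, but it turns out not to pose a genuine obstacle because the surviving leading contribution is of admissible order under the working hypothesis that $nt$ is small.
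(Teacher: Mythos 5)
Your overall approach coincides with the paper's: take expectations in~\eqref{eq:E-def}, set $p=\ell-2k+1$ and $q=k$, apply Proposition~\ref{prop:R-S-cross-moments} termwise, observe that the leading contributions telescope to zero via a binomial identity, and bound the residual errors. The bookkeeping you sketch for part~(i) and for part~(ii) with $\ell\geq 4$ is sound and matches the paper's. You deserve credit for flagging $\ell=2$ as exceptional: the relevant identity in part~(ii) is $\sum_{k=0}^{\ell/2}(-1)^{k+1}\binom{\ell/2}{k}k=(\ell/2)(1-1)^{(\ell/2)-1}$, which is $0$ only for $\ell\geq 4$ and equals $1$ at $\ell=2$; the paper's own proof silently declares this sum to vanish for all even $\ell$, so catching the exception is a genuine improvement on the written argument.

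However, your resolution of the $\ell=2$ case is wrong, and the bound~\eqref{eq:E-bound-even} is in fact false there. From~\eqref{eq:E-def} and $\Exp R_{n,t}^3=0$ one has $\Exp\cE^2_{n,t}=\tfrac12 v(t)^{-3/2}\Exp(R_{n,t}S_{n,t})$, and Proposition~\ref{prop:R-S-cross-moments}\ref{prop:R-S-cross-moments-ii} with $p=q=1$ gives $\Exp(R_{n,t}S_{n,t})=\tfrac{n^2t}{12}+O((n^2t)^{1/2})$. Since $v(t)=s_0/t$, the surviving leading term in $\Exp\cE^2_{n,t}$ is of order $n^2t^{5/2}/s_0^{3/2}$, whereas the right-hand side of~\eqref{eq:E-bound-even} for $\ell=2$ is of order $nt^2/s_0$. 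Their ratio is $\asymp n\sqrt{t}/\sqrt{s_0}$, which is unbounded uniformly over $t\in(n^{-\nu},\tfrac{1}{40(n+1)})$: it is $\asymp n^{1-\nu/2}\geq n^{1/4}$ at the lower end and $\asymp\sqrt{n}$ at the upper end. So the surviving term cannot be ``absorbed after enlarging $C$''; the claimed bound simply fails for $\ell=2$. This is an error in the lemma as stated, inherited from the paper. It is harmless downstream: in Theorem~\ref{thm:n-b-e} one takes $n=n_t\asymp 1/t$, under which $n^2 t^{5/2}\asymp\sqrt{t}$, which is exactly the target order. A correct version of part~(ii) would either replace the factor $n^{-1}$ by $(n^2t)^{-1/2}$ (as in part~(i)), or record the $\ell=2$ case separately with a bound of order $n^2t^{5/2}$.
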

\begin{proof}
Suppose that $\nu \in (1, \frac{3}{2})$, and let $\delta >0$ be as in Proposition~\ref{prop:clt-with-error}. Take $n \in \N$
and $t \in ( n^{-\nu}, \frac{1}{40(n+1)} )$. 
Taking expectations in~\eqref{eq:E-def} and using~\eqref{eq:var-t}, we have
\begin{align*}
\Exp \cE^\ell_{n,t}   & = ( t/s_0)^{\frac{\ell+1}{2}} \sum_{k=0}^{\lfloor(\ell+1)/2\rfloor}\frac{(-1)^{k+1}}{2^k k! (\ell-2k+1)!}
\Exp \left( R_{n,t}^{\ell-2k+1} S_{n,t}^k \right). 
\end{align*}
The idea is to apply Proposition~\ref{prop:R-S-cross-moments}
with $p= \ell - 2k +1$ and $q=k$, so that $1 \leq p+q = \ell - k +1 \leq \delta \log n$ for all $n$ large enough.  
For part~\ref{lem:error-estimates-i}, suppose that $\ell -1 \in 2\ZP$. 
Then $\ell-2k+1 \in 2\ZP$, and so from
    Proposition~\ref{prop:R-S-cross-moments}\ref{prop:R-S-cross-moments-i}  we get 
\begin{align*}
 \left| \Exp \cE^\ell _{n,t}   - 
\left( \frac{n^2 t^2}{12s_0} \right)^{\frac{\ell+1}{2}} 
 \sum_{k=0}^{\frac{\ell+1}{2}} 
\frac{(-1)^{k+1} }{  k! \bigl( \frac{\ell+1}{2} - k \bigr)!} 
 \right|
  \leq 
    C   \left( \frac{40n^2 t^2}{s_0} \right)^{\frac{\ell+1}{2}}
     \sum_{k=0}^{\frac{\ell+1}{2}} \frac{ (n^2 t)^{-1/2}}{k! ( \frac{\ell+1}{2} -k )!},
     \end{align*}
where in the bound we used $ \sup_{p \in \N} (\frac{p}{2} +1 )!  ( \frac{p}{2} )! / p! < \infty$, as follows from~\eqref{eq:simple-binomial}.
Here
\[ 
\sum_{k=0}^{\frac{\ell+1}{2}} (-1)^{k+1} 
\frac{1}{  k! \bigl( \frac{\ell+1}{2} - k \bigr)!} 
 =
 \frac{1}{( \frac{\ell+1}{2} )!} \sum_{k=0}^{\frac{\ell+1}{2}} (-1)^{k+1} \binom{\frac{\ell+1}{2}}{k} = 0, 
\]
and, similarly,
\[ 
\sum_{k=0}^{\frac{\ell+1}{2}}  
\frac{1}{  k! \bigl( \frac{\ell+1}{2} - k \bigr)!} 
 =
 \frac{1}{( \frac{\ell+1}{2} )!} \sum_{k=0}^{\frac{\ell+1}{2}}   \binom{\frac{\ell+1}{2}}{k} =  \frac{2^{\frac{\ell+1}{2}}}{( \frac{\ell+1}{2} )!} .
\]
From here we get~\eqref{eq:E-bound-odd}.
For part~\ref{lem:error-estimates-ii}, suppose $\ell \in 2\ZP$ is even. 
Then $\ell-2k+1$ is odd, and so from
    Proposition~\ref{prop:R-S-cross-moments}\ref{prop:R-S-cross-moments-ii}   we get 
\begin{align*}
\left| \Exp \cE^\ell _{n,t}   - 
\left( \frac{n^2 t^2}{12s_0} \right)^{\frac{\ell}{2}} (t/s_0)^{1/2} 
 \sum_{k=0}^{\frac{\ell}{2}} 
\frac{2 k (-1)^{k+1} }{  k! \bigl( \frac{\ell}{2} - k \bigr)!} 
 \right| 
     \leq 
    C  \left( \frac{40n^2 t^2}{s_0} \right)^{\frac{\ell}{2}}
    ( t/s_0)^{1/2} \sum_{k=0}^{\frac{\ell}{2}} \frac{ (n^2 t)^{-1/2}}{k! ( \frac{\ell}{2} -k )!},
     \end{align*}
using $\sup_{p \in \N} (\frac{p+1}{2})! (\frac{p+1}{2})! /p! < \infty$, by~\eqref{eq:simple-binomial}. 
Now
\[ \sum_{k=0}^{\frac{\ell}{2}} 
\frac{  k (-1)^{k+1} }{  k! \bigl( \frac{\ell}{2} - k \bigr)!} 
= \frac{1}{(\frac{\ell}{2})!} \sum_{k=0}^{\frac{\ell}{2}} (-1)^{k+1} \binom{\frac{\ell}{2}}{k}  k
= 0 , \]
and from here we obtain~\eqref{eq:E-bound-even}.
\end{proof}

\begin{proof}[Proof of Theorem~\ref{thm:n-b-e}]
Let $\delta >0$ be as in Proposition~\ref{prop:R-S-cross-moments}. Proposition~\ref{prop:clt-with-error} shows that, for   constants $c_0 >0$ and $C < \infty$, for all $n \in \N$ and $t >0$ such that $n^{-5/4}   \leq nt \leq c_0$, say, the bound~\eqref{eq:clt-with-error} holds. 
In particular, 
taking $c_1 \in (0,c_0)$ for which $160c_1^2 / s_0 < 1/2$, 
the bound~\eqref{eq:clt-with-error} holds whenever $t >0$ and $n = n_t := \lfloor c_1/t \rfloor$ is sufficiently large.
For $\ell \leq 2 \lfloor (\delta/3) \log n \rfloor +1$, we have $\ell \leq (\delta/2) \log n$ for all $n$ large enough, and so Lemma~\ref{lem:error-estimates}, with $n = n_t$, shows that
\[ \bigl| \cE^\ell_{n_t,t} \bigr| \leq \frac{C}{\Gamma (\frac{\ell+1}{2} )} \left( \frac{80 c_1^2}{s_0} \right)^{\frac{\ell}{2}} n_t^{-1/2} , \text{ for all } 1 \leq \ell \leq (\delta/2) \log n_t.\]
Hence from~\eqref{eq:clt-with-error} we get
\[
  \sup_{x \in \R} \left| \Pr \left( \frac{N_t -\Exp N_t}{\sqrt { \Var N_t}} \leq x  \right) - \Phi (x) \right| \leq 
  \frac{C}{n_t^2 t^{3/2}} + 
C n_t^{-1/2} \sum_{\ell=0}^{\infty} 2^{\frac{\ell}{2}}  \left( \frac{80 c_1^2}{s_0} \right)^{\frac{\ell}{2}} \leq C \sqrt{t},
\]
by our choice of $n_t$ and of $c_1$.
\end{proof}

\begin{proof}[Proof of Theorem~\ref{thm:clt}]
Consider the interval $I_n :=  [ - \sqrt{n/\sigma^2} , \sqrt{n/\sigma^2}]$. 
We  claim that
it is a consequence of  Theorem~\ref{thm:n-b-e} and the relation $\Pr ( M_n \leq t) = \Pr (N_t \leq n)$ from~\eqref{eq:M-N-inverse}, which holds for all $n \in \N$
and all $t \in (0,1)$, that 
 there exists $C \in \RP$ such that
\begin{align}
\label{eq:clt-proof-2}
 \sup_{x \in I_n}   \left|  \Pr \left( \sqrt{ \frac{n^3}{\sigma^2}} \left( M_n - \frac{2}{n} \right)  \leq x  \right)  - \Phi ( x ) \right| 
     &   \leq  C  n^{-1/2}.
\end{align}
Assuming (for now) that~\eqref{eq:clt-proof-2} holds, we extend the bound over all $x \in \R$  using monotonicity and comparison to the small Gaussian tails. Indeed, we have from~\eqref{eq:clt-proof-2} that
\begin{align}
\label{eq:N-t-n-BE-3}
   \Pr \left( \sqrt{ \frac{n^3}{\sigma^2}} \left( M_n - \frac{2}{n} \right)  \leq - \sqrt{ n /\sigma^2}  \right)  
   \leq \Phi ( - \sqrt{ n /\sigma^2} ) +  O (n^{-1/2}) =  O (n^{-1/2}) ,
\end{align}
by standard tail bounds for~$\Phi$, and similarly for the positive tail. Then
\begin{align*}
    {} &
 \sup_{ x  < -  \sqrt{ n /\sigma^2}} \left|  \Pr \left( \sqrt{ \frac{n^3}{\sigma^2}} \left( M_n - \frac{2}{n} \right)  \leq x  \right)  - \Phi (x) \right|  \\
 & {} \qquad {} 
 \leq 
 \sup_{  x <-  \sqrt{ n /\sigma^2}}   \Pr \left( \sqrt{ \frac{n^3}{\sigma^2}} \left( M_n - \frac{2}{n} \right)  \leq x  \right) 
 +  \sup_{ x <  - \sqrt{ n /\sigma^2}}  \Phi (x) \\
 & {} \qquad  \leq \Pr \left( \sqrt{ \frac{n^3}{\sigma^2}} \left( M_n - \frac{2}{n} \right)  \leq - \sqrt{ n /\sigma^2}  \right)
 + \Phi ( - \sqrt{ n /\sigma^2} ) =  O (n^{-1/2}),
\end{align*}
by~\eqref{eq:N-t-n-BE-3}. A similar argument applies to the values of $x > \sqrt{ n /\sigma^2}$,
noting that
\begin{align*}
 \sup_{ x  >  \sqrt{ n /\sigma^2}} \left|  \Pr \left( \sqrt{ \frac{n^3}{\sigma^2}} \left( M_n - \frac{2}{n} \right)  \leq x  \right)  - \Phi (x) \right| 
 =  \sup_{ x  >  \sqrt{ n /\sigma^2}} \left|  \Pr \left( \sqrt{ \frac{n^3}{\sigma^2}} \left( M_n - \frac{2}{n} \right)  > x  \right)  - \barPhi (x) \right| ,
\end{align*}
where $\barPhi (x) := 1- \Phi(x)$.
Thus Theorem~\ref{thm:clt} follows from~\eqref{eq:clt-proof-2}. It remains to verify the claim~\eqref{eq:clt-proof-2}, which we deduce from a careful inversion of Theorem~\ref{thm:n-b-e}.

For $n \in \N$ and $x \in I_n$, define 
\[ t_n (x) := \frac{2}{n} + x \sqrt{ \frac{\sigma^2}{n^3}} , \text{ and }
y_n (x) := \sqrt{\frac{2 t_n(x)}{\sigma^2}} \left( n - \frac{2}{t_n(x)} \right).
\]
Observe that $t_n (x) \geq 1/n$ for $x \in I_n$.
It follows from~\eqref{eq:M-N-inverse} that
\begin{equation}
    \label{eq:clt-inversion}
 \Pr \left( \sqrt{ \frac{n^3}{\sigma^2}} \left( M_n - \frac{2}{n} \right)  \leq x  \right) 
= \Pr \left( N_{t_n(x)} \leq n \right) .
\end{equation}
Also observe that
\begin{equation}
    \label{eq:y-n-x}
    y_n (x) = 
    x \sqrt{ \frac{2}{n t_n(x)}} = 
    x \left( 1 + \frac{x}{2} \sqrt{ \frac{\sigma^2}{n}} \right)^{-1/2} ,
\end{equation}
 the term in brackets in~\eqref{eq:y-n-x} is positive for $x \in I_n$,
so $y_n(x)$ has the same sign as~$x$, and indeed
\begin{equation}
    \label{eq:y-n-square-bounds}
    \frac{2 x^2}{3} \leq y_n(x)^2 \leq 2 x^2, \text{ for all } x \in I_n.
\end{equation}
We have, by Theorem~\ref{thm:n-b-e}, there exists $C \in \RP$ such that, for all $n \in \N$ and all $x \in \R$,
\begin{align*}
    \left| \Pr \left( N_{t_n(x)} \leq n \right) - \Phi ( y_n(x) ) \right| 
  &   =  \left| \Pr \left( \sqrt{ \frac{2 t_n(x)}{\sigma^2} }\left(  N_{t_n(x)} - \frac{2}{t_n(x)} \right)\leq y_n(x) \right) - \Phi (y_n(x)) \right|  \\
   &   \leq  C  t_n (x)^{1/2}.
\end{align*}
Thus, by~\eqref{eq:clt-inversion} and the fact that $t_n (x) \geq 1/n$ for $x \in I_n$, we get
\begin{align}
\label{eq:clt-proof-1}
 \sup_{x \in I_n}   \left|  \Pr \left( \sqrt{ \frac{n^3}{\sigma^2}} \left( M_n - \frac{2}{n} \right)  \leq x  \right)  - \Phi ( y_n(x) ) \right| 
     &   \leq  C  n^{-1/2}.
\end{align}
It remains to compare $\Phi ( y_n (x))$ to $\Phi (x)$.
Write (as previously) $\phi$ for the standard Gaussian
  density. By the mean value theorem, there
exists $\theta = \theta_n (x) \in (0,1)$ such that
\begin{equation}
\label{eq:mvt}\Phi ( y_n (x)) - \Phi (x) 
= (y_n(x) -x ) \phi (x + \theta (y_n(x) - x)) .
\end{equation}
Here, for $x \in I_n$,
\[  \phi (x + \theta (y_n(x) - x)) 
\leq (2 \pi)^{-1/2} \exp \left\{ - \min ( x, y_n (x) )^2/2 \right\} \leq  (2 \pi)^{-1/2} \re^{- x^2 /3},
\]
by~\eqref{eq:y-n-square-bounds}. Moreover, from~\eqref{eq:y-n-x} we get
$| y_n (x) - x | \leq C x^2 / \sqrt{n}$ for all $x \in I_n $ and some constant $C \in \RP$.
Hence there is a constant $C \in \RP$ such that
\begin{equation}
\label{eq:Phi-x-y}
\bigl|  \Phi ( y_n (x)) - \Phi (x)  \bigr|
\leq C \frac{x^2 }{\sqrt{n}} \re^{-x^2/3} , \text{ for all } x \in I_n.
\end{equation}
Combining~\eqref{eq:clt-proof-1} and~\eqref{eq:Phi-x-y} we verify~\eqref{eq:clt-proof-2}.
\end{proof}

Finally, we give the proofs of the corollaries from Section~\ref{sec:branching}. 

\begin{proof}[Proof of Corollary~\ref{cor:cmj}]
This is direct from Theorem~\ref{thm:n-b-e} since, as explained in Section~\ref{sec:branching},
Kingman's embedding gives $T_t = N_{\re^{-t}}$, $t \in (0,\infty)$.
\end{proof}

\begin{proof}[Proof of Corollary~\ref{cor:brw}]  
Recalling the identification $\ell_n = \log (1/M_n)$, we have, for $x \in \R$,
\begin{align}
\label{eq:cor-change-1}
    \Pr \left( \sqrt{\frac{4n}{\sigma^2}} \left( \ell_n - \log (n/2) \right) \geq x \right) 
    & = \Pr \left( M_n \leq \frac{2}{n} \exp \left( - \sqrt{\frac{\sigma^2}{4n}} x \right) \right) \nonumber\\
    & = \Pr \left( \sqrt{\frac{ n^3}{\sigma^2}} \left( M_n - \frac{2}{n}  \right) \leq -b_n (x) \right), 
\end{align}
where
\[ b_n (x) := \sqrt{\frac{4n}{\sigma^2}} \left( 1 - \exp \left( - \sqrt{\frac{\sigma^2}{4n}} x \right)  \right) . \]
It follows that, for some $C < \infty$,
$| b_n (x) - x | \leq C x^2 / \sqrt{n}$,  for all $x \in \R$  and all $n\in \N$.
Hence there exists $\delta >0$ such that $| b_n (x) - x | \leq x/2$ for all $x \in [-\delta \sqrt{n}, \delta \sqrt{n}]$, and, in particular,
$b_n (\delta \sqrt{n} ) > (\delta/2) \sqrt{n}$ and
$b_n (-\delta \sqrt{n} ) < - (\delta/2) \sqrt{n}$.
Consequently, from~\eqref{eq:cor-change-1}, 
\begin{align*}
   \sup_{x \geq \delta \sqrt{n}}  \Pr \left( \sqrt{\frac{4n}{\sigma^2}} \left( \ell_n - \log (n/2) \right) \geq x \right) \leq \Pr \left( \sqrt{\frac{ n^3}{\sigma^2}} \left( M_n - \frac{2}{n}  \right) \leq -(\delta/2) \sqrt{n} \right) = O (n^{-1/2} ),
\end{align*}
by Theorem~\ref{thm:clt}; similarly for $x \leq - \delta\sqrt{n}$. On the other hand, by~\eqref{eq:cor-change-1} and Theorem~\ref{thm:clt}, there exists $C< \infty$ such that, for all $n \in \N$ and all
$x \in [ - \delta \sqrt{n}, \delta \sqrt{n} ]$,
\begin{align*}
    \left| \Pr \left( \sqrt{\frac{4n}{\sigma^2}} \left( \ell_n - \log (n/2) \right) \geq x \right)  - \Phi ( - x) \right| & \leq \left| \Phi ( b_n(x)) - \Phi (x) \right| + Cn^{-1/2}.
\end{align*}
By the mean value theorem, similarly to~\eqref{eq:mvt}, for all $n \in \N$, 
\[    \sup_{x \in [ - \delta \sqrt{n}, \delta \sqrt{n} ]} \left| \Phi ( b_n(x)) - \Phi (x) \right| \leq (2 \pi)^{-1/2} \sup_{x \in \R} \re^{-x^2/8} \bigl| b_n (x) - x \bigr|  = O (n^{-1/2} ), \]
which completes the proof of~\eqref{eq:brw-1}. The proof of~\eqref{eq:brw-2} is direct from  the fact that
\[ \Pr \left( r_n - \log (n^2/2) < x \right) = \Pr  \left( \frac{n^2 m_n}{2} > \re^{-x} \right) , \text{ for all } x \in \R, \]
and applying Theorem~\ref{thm:small-gap}.
\end{proof}

\appendix 

\section{Moments of partial sums}
\label{sec:appendix}

We give two auxiliary results needed in the proof of Lemma~\ref{lem:K-R-W-moments}. While we suspect that neither result is new, we have not been able to locate a reference. Lemma~\ref{lem:K-statistic-moments} gives first-order expansions of moments of sums of i.i.d.~summands, with quantitative error bounds, that has some similarities to 
the  Macinkiewicz--Zygmund and Rosenthal inequalities~\cite[pp.~146--153]{gut}.

\begin{lemma}
\label{lem:K-statistic-moments}
Let $\xi$ be a random variable with $\Exp ( |\xi|^k ) < \infty$ for every $k \in \ZP$.
Consider
$\Xi_{n} := \sum_{i=1}^{n} \xi_i$,
where $\xi_1,\xi_2, \ldots$ are i.i.d.\ copies of $\xi$.
Write $\mu_k (n) := \Exp ( \Xi_n^k )$, and 
\begin{equation}
    \label{eq:A-k-def}
    A_k := \bigl[ ( \Exp \xi )^k + \Exp ( | \xi|^{k-1} ) \bigr] k^2,
    \text{ and } A'_k := \bigl[ ( \Exp ( \xi^2 ) )^k + \Exp ( | \xi|^{2k-2} ) \bigr] k^2.
\end{equation}
\begin{thmenumi}[label=(\roman*)]
\item
\label{lem:K-statistic-moments-i}
Suppose that $\Exp \xi =0$. Then for all $k \in \ZP$ and all $n \in \N$,
\[ \left| \mu_{2k} (n) - ( \Exp ( \xi^2) )^k \frac{ (2k)!}{2^k k!} n^k \right| \leq A'_{k} \frac{ (2k)!}{2^k k!}  n^{k-1} .\]
\item
\label{lem:K-statistic-moments-ii}
Suppose that $\Exp \xi > 0$. 
 Then for all $k \in \ZP$ and all $n \in \N$,
 \[ \left| \mu_{k} (n) - ( \Exp  \xi )^k n^k \right| \leq A_k n^{k-1} .\]
\end{thmenumi}
\end{lemma}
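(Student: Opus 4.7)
The plan is to expand $\mu_k(n) = \Exp(\Xi_n^k) = \sum_{(i_1,\ldots,i_k) \in \{1,\ldots,n\}^k} \Exp(\xi_{i_1} \cdots \xi_{i_k})$ and group the resulting terms by the coincidence partition $P$ of $\{1,\ldots,k\}$ induced by the indices $(i_1,\ldots,i_k)$. If $P$ has $r$ blocks of sizes $s_1,\ldots,s_r$ (summing to $k$ in part (ii) and to $2k$ in part (i)), then by independence every tuple with pattern $P$ has the same expectation $\prod_{j=1}^r \Exp(\xi^{s_j})$, and the number of such tuples is $n(n-1)\cdots(n-r+1) \leq n^r$.

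For part (ii), the unique partition with $r = k$ blocks of size $1$ yields the leading term $\frac{n!}{(n-k)!}(\Exp \xi)^k$. Approximating $\frac{n!}{(n-k)!}$ by $n^k$ introduces the discrepancy $|n^k - \frac{n!}{(n-k)!}| \leq \binom{k}{2} n^{k-1}$, which accounts for the $(\Exp \xi)^k$ piece of $A_k$. All other partitions have $r \leq k - 1$: a union bound over which pair of positions coincides shows that at most $\binom{k}{2} n^{k-1}$ tuples arise from such partitions in total, and for each tuple the expectation is bounded in absolute value by $\prod_j \Exp(|\xi|^{s_j})$; since at least one $s_j \geq 2$, Lyapunov's inequality (i.e.\ non-decrease of $s \mapsto \Exp(|\xi|^s)^{1/s}$) bounds this product in terms of $\Exp(|\xi|^{k-1})$ up to a polynomial-in-$k$ constant absorbed into the $k^2$ factor of $A_k$.

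For part (i), the centring $\Exp \xi = 0$ kills every partition that contains a singleton block, leaving only partitions all of whose blocks have size $\geq 2$. The pair-partitions ($r = k$, all $s_j = 2$) supply the main term: there are $(2k)!/(2^k k!)$ of these, each contributing $n(n-1)\cdots(n-k+1)(\Exp \xi^2)^k$, and replacing $\frac{n!}{(n-k)!}$ by $n^k$ generates the $O(k^2 n^{k-1})$ discrepancy that is multiplied by $(\Exp \xi^2)^k$ in $A'_k$. Every remaining contributing partition has at least one block of size $\geq 3$, forcing $r \leq k - 1$ and hence $O(n^{k-1})$ tuples; Lyapunov bounds the associated moment factors by a polynomial-in-$k$ multiple of $\Exp(|\xi|^{2k-2})$, yielding the second piece of $A'_k$.

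The main obstacle is the combinatorial bookkeeping across all non-leading partitions and the careful application of Lyapunov's inequality so that the moment factors collapse into the advertised expressions $\Exp(|\xi|^{k-1})$ and $\Exp(|\xi|^{2k-2})$. In particular, the extremal single-block partition (all $k$, respectively all $2k$, positions coincident) produces only $n$ tuples, and its moment factor $\Exp(|\xi|^k)$ (respectively $\Exp(|\xi|^{2k})$) is not a priori controlled by $\Exp(|\xi|^{k-1})$ (respectively $\Exp(|\xi|^{2k-2})$); the small tuple count must be balanced against this larger moment and absorbed into the $k^2$ prefactor, which also subsumes the Bell-number-type multiplicity of all partitions with $r \leq k - 1$.
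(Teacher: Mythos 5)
Your decomposition of $\mu_k(n)$ by coincidence pattern and your use of Lyapunov's inequality is essentially the expansion the paper uses (the paper phrases part (ii) as the split $I_{n,k}=I^\circ_{n,k}\cup(I_{n,k}\setminus I^\circ_{n,k})$ and bounds the off-diagonal sum via Lyapunov), so the overall strategy matches. The problem is precisely in the closing paragraph where you identify the ``extremal single-block partition'' and assert that its contribution is ``absorbed into the $k^2$ prefactor'': this is asserted, not argued, and it cannot be made to work in the generality the lemma claims. The single-block partition contributes $n\,\Exp(|\xi|^k)$ to $\mu_k(n)$ (resp.\ $n\,\Exp(|\xi|^{2k})$ in part (i)), so you would need $n\,\Exp(|\xi|^k)\leq A_k n^{k-1}$, i.e.\ $\Exp(|\xi|^k)\leq k^2\bigl[(\Exp\xi)^k+\Exp(|\xi|^{k-1})\bigr]n^{k-2}$, and for fixed small $n$ this is violated by heavy-tailed $\xi$. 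Concretely, for $k=3$ and $n\in\{1,2\}$, take $\xi$ with $\Pr(\xi=1)=1-M^{-2}$ and $\Pr(\xi=M)=M^{-2}$: as $M\to\infty$, $\Exp\xi\to 1$ and $\Exp(\xi^2)\to 2$ while $\Exp(\xi^3)\to\infty$, so $\mu_3(n)$ blows up while the advertised bound stays bounded. (A symmetric two-point perturbation of $\unif{-1}{1}$ defeats part (i) analogously.) So the step you flagged as the ``main obstacle'' genuinely fails; the lemma as stated is not correct without a further restriction on $\xi$ or on $n$.

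What rescues the argument --- and what the paper implicitly relies on in its only applications (the triple $(K_{n,t},R_{n,t},W_{n,t})$, built from $\xi\sim\unif{-1}{1}$ and $\xi=-w(1/v)$ with $\|w\|_\infty=s_0<1$) --- is the additional hypothesis $|\xi|\leq 1$ a.s. Then $s\mapsto\Exp(|\xi|^s)$ is non-increasing, so deleting one index from a colliding pair can only increase the per-tuple expectation, every collapsed partition's contribution is dominated by a $(k-1)$-fold product, and the $\binom{k}{2}n^{k-1}\Exp(|\xi|^{k-1})$ bound follows. I would recommend either imposing $|\xi|\leq 1$ a.s.\ as a hypothesis (all the paper needs), or restricting to $n$ with $n^{k-2}\Exp(|\xi|^{k-1})\geq\Exp(|\xi|^k)$. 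For context, the paper's own proof contains the same unjustified step: the claimed inequality $\Exp\sum_{I_{n,k}\setminus I^\circ_{n,k}}|\xi_{i_1}\cdots\xi_{i_k}|\leq\binom{k}{2}\Exp\sum_{I_{n,k-1}}|\xi_{i_1}\cdots\xi_{i_{k-1}}|$ fails already for $k=2$, $n=1$, $\xi\equiv 2$; and the pairing identity used there to reduce part (i) to part (ii) is exact only on off-diagonal index tuples. So your direct partition bookkeeping for part (i) is actually the cleaner route, once the bounded-$\xi$ hypothesis is added to make the absorption step valid.
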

\begin{proof}
First we prove~\ref{lem:K-statistic-moments-ii}.
Similarly to the proof of Lemma~\ref{lem:small-gaps-K} (but with an index shift), write $I_{n,k} = \{1,\ldots,n\}^k$
and $I_{n,k}^\circ$ for vectors in $I_{n,k}$ with no two coordinates the same. Then, by the fact that the $\xi_i$ are i.i.d., 
\[ \Exp \sum_{(i_1, \ldots, i_k) \in I^\circ_{n,k} } \xi_{i_1} \cdots \xi_{i_k} 
= ( \Exp \xi )^k | I^\circ_{n,k} | ,
\]
 On the other hand, 
 given $(i_1, \ldots, i_k) \in I_{n,k}$ for which
 there are $\ell \in \{ 1, \ldots,  k\}$ distinct indices appearing in multiplicities $m_1 + \cdots + m_\ell = k$, by independence,  
 \[ \Exp | \xi_{i_1} \cdots \xi_{i_{k}}  |
 = \prod_{j=1}^\ell \Exp ( | \xi |^{m_j} )
 \leq \prod_{j=1}^\ell \bigl( \Exp ( | \xi |^{k} ) \bigr)^{m_j/k} = \Exp ( | \xi |^{k} ) , \]
 where we used Lyapunov's inequality for the middle step. Hence 
\[ \Exp \sum_{(i_1, \ldots, i_k) \in I_{n,k} \setminus I^\circ_{n,k} } \bigl| \xi_{i_1} \cdots \xi_{i_k} \bigr|
\leq \binom{k}{2} \Exp \sum_{(i_1, \ldots, i_{k-1}) \in I_{n,k-1} } \bigl| \xi_{i_1} \cdots \xi_{i_{k-1}} \bigr| \leq k^2 \Exp ( | \xi |^{k-1} ) n^{k-1}.
\]
Moreover, by~\eqref{eq:I-n-k-bound}, we have $0 \leq n^k - | I_{n,k}^\circ | \leq k^2 n^{k-1}$. It follows that
\[ \left| \mu_k (n) - ( \Exp \xi )^k n^k \right| \leq \left[ ( \Exp \xi )^k + \Exp ( |\xi|^{k-1} ) \right] k^2 n^{k-1} = A_k n^{k-1}, \]
where $A_k$ is as defined at~\eqref{eq:A-k-def}. This yields part~\ref{lem:K-statistic-moments-ii}. 
     For part~\ref{lem:K-statistic-moments-i}, note that
    \[ \mu_{2k} (n) = \Exp \sum_{1 \leq i_1, \ldots, i_{2k} \leq n} \xi_{i_1} \cdots \xi_{i_{2k}} 
    = \frac{(2k)!}{2^k k!} \Exp \sum_{1 \leq i_1, \ldots, i_{k} \leq n} \xi^2_{i_1} \cdots \xi^2_{i_{k}} ,
        \]
        since the product has expectation zero whenever $(i_1, \ldots, i_{2k} )$
        has a coordinate which appears exactly once,
        and the combinatorial factor comes from the number
        of ways of pairing up coordinates. The rightmost expression in the
        last display is $\Exp ( \widetilde{\Xi}^k_{n} )$
        for $\widetilde{\Xi}_{n} = \sum_{i=1}^{n} \xi_i^2$. 
       Thus part~\ref{lem:K-statistic-moments-i} follows from
       part~\ref{lem:K-statistic-moments-ii},
       with $A_k'$ in~\eqref{eq:A-k-def} being the quantity $A_k$ but with $\xi^2$ in place of $\xi$.
\end{proof}

\begin{lemma}
    \label{lem:binomial-product}
    Let $X \sim \Bin{n}{1/2}$. Then, for every $\alpha \in \ZP, \beta \in \ZP$ with $\max (\alpha ,\beta ) \geq 1$,
    \[  
    \left| \Exp \bigl( X^\alpha (n-X)^\beta \bigr) -  (n/2)^{\alpha+\beta} \right| \leq \max (\alpha, \beta) n^{\alpha+\beta-(1/2)}, \text{ for all } n \in \N.
    \]
\end{lemma}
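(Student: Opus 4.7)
The plan is to reduce the quantity $X^\alpha (n-X)^\beta - (n/2)^{\alpha+\beta}$ to a single factor of $Y := X - n/2$ times a bounded polynomial, and then exploit the fact that $\Exp Y = 0$ and $\Var Y = n/4$, so that $\Exp |Y| \leq \sqrt{\Exp Y^2} = \sqrt{n}/2$ by Jensen's inequality.

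First I would use the additive splitting
\[
X^\alpha (n-X)^\beta - (n/2)^{\alpha+\beta} = \bigl( X^\alpha - (n/2)^\alpha \bigr) (n-X)^\beta + (n/2)^\alpha \bigl( (n-X)^\beta - (n/2)^\beta \bigr),
\]
and then the telescoping identity $a^k - b^k = (a-b) \sum_{i=0}^{k-1} a^i b^{k-1-i}$ to write $X^\alpha - (n/2)^\alpha = Y \cdot P_\alpha$ and $(n-X)^\beta - (n/2)^\beta = -Y \cdot Q_\beta$, where $P_\alpha := \sum_{i=0}^{\alpha-1} X^i (n/2)^{\alpha-1-i}$ and $Q_\beta := \sum_{i=0}^{\beta-1} (n-X)^i (n/2)^{\beta-1-i}$ (interpreted as empty sums, hence zero, when $\alpha = 0$ or $\beta = 0$). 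Since $0 \leq X, n-X \leq n$, these satisfy the pointwise bounds $0 \leq P_\alpha \leq \alpha n^{\alpha-1}$ and $0 \leq Q_\beta \leq \beta n^{\beta-1}$.

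Combining, we obtain pointwise
\[
\bigl| X^\alpha (n-X)^\beta - (n/2)^{\alpha+\beta} \bigr| \leq |Y| \cdot \bigl( P_\alpha (n-X)^\beta + (n/2)^\alpha Q_\beta \bigr) \leq (\alpha + \beta) n^{\alpha+\beta-1} |Y|.
\]
Taking expectations and using $\Exp|Y| \leq \sqrt{n}/2$ gives
\[
\bigl| \Exp \bigl( X^\alpha (n-X)^\beta \bigr) - (n/2)^{\alpha+\beta} \bigr| \leq \frac{\alpha+\beta}{2} \, n^{\alpha+\beta-1/2},
\]
which is at most $\max(\alpha,\beta) \, n^{\alpha+\beta-1/2}$ because $(\alpha+\beta)/2 \leq \max(\alpha,\beta)$. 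There is no real obstacle here beyond spotting the telescoping factorization; the bound is loose (the true error is of order $n^{\alpha+\beta-1}$), but the $n^{\alpha+\beta-1/2}$ target in the lemma statement makes elementary estimates sufficient.
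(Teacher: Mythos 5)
Your proof is correct, and its high-level structure matches the paper's: both arguments bound the pointwise quantity $\bigl| X^\alpha (n-X)^\beta - (n/2)^{\alpha+\beta} \bigr|$ by a deterministic constant times $|X - n/2|$ and then invoke $\Exp |X - n/2| \leq (\Var X)^{1/2} = \sqrt{n}/2$ via Jensen/Lyapunov. The difference lies in how the Lipschitz-type constant is obtained. The paper first establishes the auxiliary inequality $|(1+y)^\alpha - 1| \leq c_\alpha |y|$ for $|y| \leq 1$ with $c_\alpha = \max(1, \alpha 2^{\alpha-1})$, which requires separate arguments for $\alpha \geq 1$ (mean value theorem) and $\alpha \in [0,1]$ (monotonicity of $((1+y)^\alpha-1)/y$), and then must check in a small case analysis that $2^{-\alpha-\beta}(c_\alpha + c_\beta) \leq \max(\alpha,\beta)$. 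Your route instead extracts the factor $X - n/2$ purely algebraically via the telescoping identity $a^k - b^k = (a-b)\sum_{i=0}^{k-1} a^i b^{k-1-i}$, which yields the pointwise bound $(\alpha+\beta)\, n^{\alpha+\beta-1}|X - n/2|$ directly, after which $(\alpha+\beta)/2 \leq \max(\alpha,\beta)$ is immediate. Your approach dodges both the exponent case analysis and the final constant-checking step, and your handling of the empty-sum convention for $\alpha = 0$ or $\beta = 0$ is clean; it is a slightly more economical presentation of the same underlying idea.
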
\begin{proof}
Write $Q_n (x) := x^\alpha (n-x)^\beta$ and $Z_n :=X -(n/2)$.
If $\alpha \geq 1$, it follows from the mean value theorem that $| (1+y)^\alpha - 1 | \leq \alpha  2^{\alpha-1} |y|$, $|y| \leq 1$.
If $\alpha \in [0,1]$, then $g(y):= ((1+y)^\alpha - 1)/y$ has $g'(y) \in (-\infty,0)$
for all $|y| < 1$, and hence $\sup_{y \in [-1,1]} | g(y) | = g(-1) = 1$. Combining the two cases gives, for every $\alpha \geq 0$,
\begin{equation}
    \label{eq:y-inequality}
    \left| (1+y)^\alpha - 1 \right| \leq c_\alpha |y|, \text{ for all } | y| \leq 1, \text{ where } c_\alpha := \max ( 1, \alpha 2^{\alpha-1} ) .
\end{equation}
Using~\eqref{eq:y-inequality} we observe that, for $0 \leq x \leq n$, 
 \begin{align*}
 \left| x^\alpha - (n/2)^\alpha \right| & = (n/2)^\alpha \left| \left(1 + \frac{x - (n/2)}{n/2} \right)^\alpha - 1 \right| 
 \leq c_\alpha (n/2)^{\alpha-1} | x- (n/2) | .
 \end{align*}
 Hence, for $0 \leq x \leq n$,
 \begin{align}
 \label{eq:Q-n-diff}
     | Q_n (x) - Q_n (n/2) | & \leq (n/2)^\alpha  \left| (n-x)^\beta - (n/2)^\beta \right| + (n/2)^\beta  \left| x^\alpha - (n/2)^\alpha \right|  \nonumber\\
     & \leq ( c_\alpha + c_\beta )   (n/2)^{\alpha+\beta-1} | x- (n/2) | .
 \end{align}
    Then, using~\eqref{eq:Q-n-diff}, 
\begin{align*} \left| \Exp  \bigl( X^\alpha (n-X)^\beta \bigr) - Q_n (n/2) \right|
& \leq \Exp \left| Q_n (X) - Q_n (n/2) \right| \leq 
     ( c_\alpha + c_\beta )  (n/2)^{\alpha+\beta-1} \Exp | Z_n |. \end{align*}
By Lyapunov's inequality, $\Exp |Z_n| \leq ( \Exp ( Z_n^2 ) )^{1/2} = ( \Var X )^{1/2} = \sqrt{ n/4}$, so that
\begin{align*} \left| \Exp  \bigl( X^\alpha (n-X)^\beta \bigr) - Q_n (n/2) \right|
& \leq  
    2^{-\alpha-\beta} ( c_\alpha + c_\beta )  n^{\alpha+\beta-(1/2)} . \end{align*}
Considering separately the cases $\min (\alpha, \beta) \geq 1$ and $\min (\alpha,\beta) =0$ (and using $2^{-x} \leq x/2$ for $x \geq 1$) it is not
hard to verify that $2^{-\alpha-\beta} (c_\alpha + c_\beta ) \leq \max(\alpha, \beta)$ as long as $\max (\alpha, \beta) \geq 1$.
        \end{proof}

\section*{Acknowledgements}

AW was supported by EPSRC grant EP/W00657X/1.
Part of this work was undertaken  during the programme ``Stochastic systems for anomalous diffusion'' (July--December 2024) hosted by the  Isaac Newton Institute, under EPSRC grant EP/Z000580/1.


\begin{thebibliography}{99}

\bibitem{af}
R.L.\ Adler and L.\ Flatto,
Uniform distribution of Kakutani's interval splitting procedure.
\emph{Z.\ Wahrsch.\ Verw.\ Gebiete} {\bf 38} (1977) 253--259.

\bibitem{ahle}
T.D.\ Ahle,
 Sharp and simple bounds for the raw moments of the binomial  and Poisson distributions.
\emph{Statist.\ Probab.\ Lett.}\ {\bf 182} (2022) 109306.

\bibitem{bak}
P.\ Bak and K.\ Sneppen,
Punctuated equilibrium and criticality in a simple model of evolution. 
\emph{Phys.\ Rev.\ Letters} {\bf 71} (1993) 4083--4086.

\bibitem{bhj92}
A.\ D.\ Barbour, L.\ Holst and S.\ Janson,
\emph{Poisson Approximation},
Oxford University Press, Oxford, 1992.

\bibitem{bg}
J.\ B\'erard and J.-B.\ Gou\'er\'e, 
Brunet-Derrida behavior of branching-selection particle systems on the line.
\emph{Commun.\ Math.\ Phys.}\ {\bf 298} (2010) 323--342.

\bibitem{bfgg}
P.\ Bickel, M.\ Fiocco, M.\ de Gunst and F.\ G\"otze, 
Willem van Zwet’s research.
\emph{Ann.\ Statist.}\ {\bf 49} (2021) 2439--2447. 

\bibitem{brunet}
E.\ Brunet and B.\ Derrida,
 Microscopic models of traveling wave equations.
\emph{Computer Phys.\ Commun.}\ {\bf 121–122} (1999) 376--381.

\bibitem{lecam}
L.\ Le Cam, 
An approximation theorem for the Poisson binomial distribution.
\emph{Pacific J. Math.}\ {\bf 10} (1960) 1181--1197.
 
\bibitem{dr}
A.\ Dvoretzky and H.\ Robbins,
On the ``parking'' problem.
\emph{Publ.\ Math.\ Inst.\ Hung.\ Acad.\ Sci.}\ {\bf 9} (1964) 209--225.

\bibitem{gut}
A.\ Gut,
\emph{Probability: A Graduate Course}.
Springer, New York, 2005.

\bibitem{henry}
B.\ Henry, 
Central limit theorem for supercritical binary homogeneous {C}rump-{M}ode-{J}agers processes.
\emph{ESAIM Probab.\ Stat.}\ {\bf 21} (2017) 113--137.

\bibitem{iksanov}
A.\ Iksanov, K.\ Kolesko, and M.\ Meiners,
Asymptotic fluctuations in supercritical {C}rump-{M}ode-{J}agers processes. \emph{Ann.\ Probab.}\ {\bf 52} (2024) 1538--1606.

 \bibitem{jn}
 S.\ Janson and R.\ Neininger,
 The size of random fragmentation trees.
 \emph{Probab.\ Theory Relat.\ Fields} {\bf 142} (2008) 399--442.

\bibitem{kakutani}
S.\ Kakutani,
A problem of equidistribution on the interval $[0,1]$.
pp.~369--375 in
\emph{Measure Theory. Proceedings of the Conference Held at Oberwolfach, 15-21 June, 1975},
Lecture Notes in Mathematics, Vol.~541, Springer, Berlin, 1976.

 \bibitem{kanter}
 M.\ Kanter,
 Some martingales associated with sample spacings.
 \emph{Ann.\ Probab.}\ {\bf 3} (1975) 569--572.

\bibitem{kingman}
J.F.C.\ Kingman, 
Random dissections and branching processes. 
\emph{Math.\ Proc.\ Cambridge Philos.\ Soc.}\ {\bf 104} (1988) 147--151.

\bibitem{ki}
F.\ Komaki and Y.\ Itoh,
A unified model for Kakutani's interval splitting and R\'enyi's random packing.
\emph{Adv.\ in Appl.\ Probab.}\ {\bf 24} (1992) 502--505.

\bibitem{lrp}
R.\ Lachi\`eze-Rey and G.\  Peccati,
New Berry–Esseen bounds for functionals of binomial point processes.
\emph{Ann.\ Appl.\ Probab.}\ {\bf 27} (2017)  1992--2031.

\bibitem{lootgieter-77a}
J.-C.\ Lootgieter,
Sur la r\'epartition des suites de Kakutani.
\emph{C.\ R.\ Acad.\ Sc.\ Paris} {\bf 285} (1977) 403--406.

\bibitem{lootgieter-77b}
J.-C.\ Lootgieter,
Sur la r\'epartition des suites de Kakutani. I. 
\emph{Ann.\ Inst.\ H.\ Poincar\'e Sect.\ B (N.S.)} {\bf 13} (1977) 385--410.

\bibitem{lukacs}
E.\ Lukacs,
\emph{Characteristic Functions}. 2nd ed.,
Griffin, London, 1970.

\bibitem{mp1}
P.\ Maillard and E.\  Paquette,
Choices and intervals.
\emph{Israel J.\ Math.}\ {\bf 212} (2016) 337--384.

 \bibitem{mpw} 
 M.\ Menshikov, S.\ Popov, and A.\ Wade,
 \emph{Non-homogeneous Random Walks}.
 Cambridge University Press, Cambridge, 2016.

\bibitem{moran} 
P.A.P.\ Moran, 
{\em An Introduction to Probability Theory}.
Clarendon Press, Oxford,
1968 (corr.~ed., 2002).

\bibitem{mullooly}
J.P.\ Mullooly,
A one dimensional random space-filling problem.
\emph{J.\ Appl.\ Probab.}\ {\bf 5} (1968) 427--435.

\bibitem{pyke}
R.\ Pyke,
The asymptotic behavior of spacings under Kakutani's model for interval subdivision.
\emph{Ann.\ Probab.}\ {\bf 8} (1980) 157--163.

\bibitem{pvz}
R.\ Pyke and W.R.\ van Zwet,
Weak convergence results for the Kakutani interval splitting procedure. 
\emph{Ann.\ Probab.}\ {\bf 32} (2004) 380--423.

\bibitem{renyi}
A.\ R\'enyi, 
On a one-dimensional problem concerning random space-filling problem. \emph{Publ.\ Math.\ Inst.\ Hungar.\ Acad.\ Sci.}\ {\bf 3} (1958) 109--127.

\bibitem{schilling}
R.L.\ Schilling,
\emph{Brownian Motion}.
3rd ed., De Gruyter, Berlin, 2021.

\bibitem{spy}
T.\ Schreiber, M.D.\ Penrose, and J.E.\ Yukich,
Gaussian limits for multidimensional random
sequential packing at saturation.
\emph{Commun.\ Math.\ Phys.}\ {\bf 272} (2007) 167--183.

\bibitem{si}
M.\ Sibuya and Y.\ Itoh,
Random sequential bisection and its associated binary tree.
\emph{Ann.\ Inst.\ Statist.\ Math.}\ {\bf 39} (1987) 69--84.

\bibitem{slud}
E.V.\ Slud,
Entropy and maximal spacings for random partitions.
\emph{Z.\ Wahrsch.\ Verw.\ Gebiete} {\bf 41} (1977/78) 341--352.
Correction (ibid.)\ {\bf 60} (1982) 139--141.

\bibitem{vanzwet}
W.R.\ van Zwet,
A proof of Kakutani's conjecture on random subdivision of longest intervals.
\emph{Ann.\ Probab.}\ {\bf 6} (1978) 133--137.

\end{thebibliography}
\end{document}